\theoremstyle{plain}
\newtheorem{theorem}{Theorem}[section]
\newtheorem{lemma}[theorem]{Lemma}
\newtheorem{proposition}[theorem]{Proposition}
\newtheorem{corollary}[theorem]{Corollary}
\theoremstyle{definition}
\newtheorem{remark}[theorem]{Remark}
\numberwithin{equation}{section}
\title[On non-uniqueness of rotational solitary waves]{On non-uniqueness of solitary waves on two-dimensional rotational flow}
\author{Vladimir Kozlov$^a$$^b$}
\address{$^a$Department of Mathematics and Computer Sciences, St Petersburg State University, St Petersburg, Russia,}
\address{$^b$Department of Mathematics, Link\"oping University, Link\"oping, Sweden}
\begin{document}
	
\begin{abstract}

We consider solitary water waves on a rotational, unidirectional flow in a two-dimensional channel of finite depth.
Ovsyannikov has conjectured in 1983 that the solitary wave is uniquely determined by the Bernoulli constant,  mass flux and by the flow force.  This conjecture was disproved by Plotnikov in 1992 for the ir-rotational flow. In this paper we show that this conjecture is wrong also for rotational flows. Moreover we prove that in any neighborhood of the first bifurcation point on the branch of solitary waves, approaching the extreme wave, there are infinitely many pairs of solitary waves corresponding to the same Bernoulli constant. We give a description of the structure of this set of pairs.
The proof is based on a bifurcation analysis of the global branch of solitary waves which is of independent interest.

\end{abstract}

\maketitle

\section{Introduction}

We consider steady water waves in a two-dimensional channel bounded below by a flat,
rigid bottom and above by a free surface that does not touch the bottom. We use the classical formulation of the problem based on the Euler equations. The surface tension is neglected and the water motion can be rotational. The formulation of this problem contains three constants the mass flux $\widehat{Q}$, the Bernoulli constant $R$ and the flow force $S$. The main subject of our study is the solitary waves. They are characterized by the asymptotics at $\pm\infty$, which is a laminar flow. In  Kozlov, Lokharu, Wheeler \cite{KLW} it was proved that such laminar flows are supercritical, i.e. the corresponding Froude number is greater than $1$.
We assume that the mass flux $\widehat{Q}$ is fixed. Then all supercritical laminar flows can be uniquely parametrized by the Bernoulli constant $R$. We denote by $S_-(R)$ the corresponding flow force. Then the flow force corresponding to the solitary wave equals $S_-(R)$. It was proved in Lokharu \cite{Lok1} that if a certain solution to the general water wave problem with the Bernoulli constant $R$ has the flow force $S$ and $S=S_-(R)$ then this solution is a solitary wave (or a supercritical laminar flow). Ovsyannikov \cite{Ov}  has conjecture in 1983 that  solitary waves  in the ir-rotational case are uniquely determined by the Bernoulli constant. This conjecture was disproved by Plotnikov \cite{P3} in 1992. In this paper we show that this conjecture is wrong for rotational flows also. Moreover we show that  there are infinitely many pairs of solitary waves corresponding to the same Bernoulli constant in any neighborhood of the first bifurcation point on the branch of solitary waves, approaching the extreme wave.

Here we can not use Plotnikov's approach based on application of methods from complex analysis. We develop a new technique which is based on a bifurcation analysis of the global branch of the solitary waves.

\subsection{Statement of the main result}

Steady water waves
in an appropriate coordinate system $(X,Y)$ moving along with the wave with the same velocity $c$,
can be described by the following stationary Euler equations  independent of time:
\begin{equation}\label{Se17a}
u_X+v_Y=0\;\;\mbox{in $D$}
\end{equation}
and
\begin{eqnarray}\label{Se17aa}
&&(u-c)u_X+vu_Y+P_X=0\;\;\mbox{in $D$},\nonumber\\
&&(u-c)v_X+vv_Y+P_Y=-g\;\;\mbox{in $D$}.
\end{eqnarray}
where
$$
D=D_\xi=\{(X,Y)\,:\,0<Y<\xi(X),\;X\in{\mathbb R}\}
$$
describes the flow domain in the channel with  the flat bottom $B$ given by $Y=0$  and with the free surface $S=S_\xi$ given by $Y=\xi(X)$. Here $(u,v)$ is the velocity vector of the fluid, $P$ is the pressure and $g$ is the gravitational constant.

There are two boundary conditions on the free surface. One expresses the fact that each water
particle on the surface remains on it. The other states that on the surface the pressure
$P$ equals the atmospheric pressure $P_{atm}$. So
\begin{equation}\label{Sep18a}
v=(u-c)\xi_X\;\;\mbox{and}\;\;P=P_{atm}\;\;\mbox{on $S$}.
\end{equation}
The bottom is impermeable:
\begin{equation}\label{Sep18aa}
v=0\;\;\mbox{on $B$}.
\end{equation}

We  introduce the stream function $\Psi=\Psi(X,Y)$ by
$$
\Psi_X(X,Y)=v(X,Y),\;\;\Psi_Y(X,Y)=c-u(X,Y).
$$
Then  equation  (\ref{Se17a}) is satisfied.
The vorticity function $v_X-u_Y$ is constant on the curves $\Psi=\mbox{const}$. Therefore there exist a function  $\omega(\Psi)$ such that $v_X-u_Y=\omega(\Psi)$. We will consider unidirectional flow, i.e. $\Psi_Y$ has the same sign in the flow, which implies that $\omega$ is single-valued.
Now the  equations in (\ref{Se17aa}) can be written as
$$
\frac{1}{2}(\Psi_X^2+\Psi_Y^2)+ P+gY+ \Omega(\Psi)=C,
$$
where
$$
\Omega(\Psi)=\int_0^\Psi \omega(\tau)d\tau.
$$
So if we found $\Psi$ we can use this formula to find $P$.
Therefore the boundary condition $P=P_{atm}$ becomes
$$
\frac{1}{2}(\Psi_X^2+\Psi_Y^2)+g\xi(X)=R\;\;\mbox{on $S$},
$$
where $R$ is a constant, which is called the Bernoulli constant (the total head).

 The first boundary conditions in (\ref{Sep18a}) and (\ref{Sep18aa}) takes the form
\begin{equation*}
\partial_X\Psi(X,\xi(X))=0\;\;\mbox{on $S$ }\;\;\mbox{and}\;\;\partial_X\Psi(X,0)=0
\end{equation*}
for all $X$.
 Since the function $\Psi$ is defined up to a constant, we assume that $\Psi=0$ on $B$. Then  $\Psi$ is a constant on $S$, say $\widehat{Q}$, which is called the mass flux and
$$
\widehat{Q}=\int_0^{\xi(X)}(c-u)dY.
$$
 After a scaling we can assume that $\widehat{Q}=1$ and $g=1$, and we arrive at the following boundary value problem
\begin{eqnarray}\label{K2a}
&&\Delta \Psi+\omega(\Psi)=0\;\;\mbox{in $D_\xi$},\nonumber\\
&&\frac{1}{2}|\nabla\Psi|^2+\xi=R\;\;\mbox{on $S_\xi$},\nonumber\\
&&\Psi=1\;\;\mbox{on $S_\xi$},\nonumber\\
&&\Psi=0\;\;\mbox{for $Y=0$}.
\end{eqnarray}
 There is one more constant
the flow force
\begin{equation}\label{Nov20aa}
{\mathcal S}=\int_0^{\xi}(P-P_{atm}+U^2)dY=\int_0^\xi \Big(\frac{1}{2}\Big(\Psi_Y^2-\Psi_X^2\Big)+\Omega(\widehat{Q})-\Omega(\Psi)-Y+R\Big)dY.
\end{equation}
The importance of these  constants in classifying the possible steady  flows along a uniform horizontal channel, pointed out by
Benjamin and Lighthill, \cite{BL54}. Another important tool in classification of water waves are laminar flows and conjugate flows. They represents solutions independent of the horizontal variable $X$. If the flux and the Bernoulli constant are fixed then there exist two laminar flows one of them has  depth $d_-(R)$ and is called supercritical, the second one has a depth $d_+(R)$ and is called subcritical. Here $d_-(R)<d_+(R)$. The corresponding flow forces are denoted by $S_-(R)$ and $S_+(R)$ respectively.

We assume that $\omega\in C^{1,\alpha}$, $\alpha\in (0,1)$, $\Psi$ and $\xi$ are even in $X$ and
\begin{equation}\label{J27ba}
\Psi_Y>0\;\;\mbox{on $\overline{D_\xi}$},
\end{equation}
which means that the flow is unidirectional.

The main subject of our study is the solitary waves. The solitary wave solution satisfies
\begin{equation}\label{Ju31a}
\;\;\xi(X)\to d,\;\;\Psi(X,Y)\to U(Y)\;\;\mbox{as $X\to\pm\infty$}
\end{equation}
uniformly in $Y$. Here $(U,d)$ is a uniform stream solution (laminar flow) with the Bernoulli constant  $R=d+U'(d)^2/2$. Moreover the Froude number
\begin{equation}\label{Au22b}
F=\Big(\int_0^d\frac{dY}{U'(Y)^2}\Big)^{-2}
\end{equation}
for this uniform stream solution  must be $>1$.  Every non-trivial
solitary wave satisfies $\xi(X)>d$ and it is symmetric, after a certain
translation, and monotonic decreasing for positive $X$. We refer to \cite{KLW} for the proof that all solitary waves are supercritical ($F$ is greater than $1$), where can be found  references for other properties of the solitary waves listed above.

If the Froude number $F$ is equal to $1$ or equivalently $R=R_c$, where $R_c$ is minimal Bernoulli constant for the uniform stream solutions, then the only solution of (\ref{K2a}) is the uniform stream solution $(U_*(Y),d_*)$ corresponding to the Bernoulli constant $R_c$.
 For solitary waves with asymptotics (\ref{Ju31a}) the flow force can be written more explicitely as
\begin{equation}\label{Sep29aa}
{\mathcal S}={\mathcal S}(R)=\int_0^{d}\Big(\frac{1}{2}U_Y^2-\Omega(U)+\Omega(1)-Y+R\Big)dY.
\end{equation}
One can show that this quantity strongly increases when $R$ increases if we choose the uniform stream solution with $F>1$, see Sect.\ref{SSep28}.

It was proved in \cite{KLW} that this laminar flow has the flow force $S_-(R)$, where $R$ is the same as for the solitary wave. Moreover any solution to (\ref{K2a}) with flow force $S_-(R)$ is a solitary wave (or a supercritical laminar flow), see  \cite{Lok1}. Another important result on solitary waves is due to Wheeler \cite{W}. He has proved existence of the global branch of solitary waves starting from a supercritical laminar flow corresponding to the critical value of the Bernoulli constant $R_c$ (the minimal value of the Bernoulli constant for the laminar flows). In   Chen, Walsh and Wheeler \cite{W2} it was explained that such branch can be chosen to be analytic up to local re-parameterization and it is unique if we will keep this analyticity property. So this branch is uniquely connected to the vorticity. We assume that the vorticity $\omega$ satisfies the following property
\begin{eqnarray}\label{Bov20a}
&&\mbox{The analytic branch corresponding to the vorticity $\omega$ approaches} \nonumber\\
&&\mbox{an extreme wave with the angle $120^\circ$ at its crest}
\end{eqnarray}
In Remark \ref{RNov22a} to Proposition \ref{Pr26} it is explained when this can happen.
One of the main results proved here is the following

\begin{theorem}\label{T12-2} Let $\omega$ satisfy (\ref{Bov20a}). Then in any neighborhood of the first bifurcation point on the branch of solitary wave solutions to the problem (\ref{K2a}), approaching the extreme wave, there are infinitely many pairs of solitary waves corresponding to the same Bernoulli constant.
\end{theorem}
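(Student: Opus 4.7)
The strategy is to locate and exploit an oscillatory behavior of the Bernoulli constant along the global analytic branch of solitary waves, since pairs of solitary waves with a common $R$ correspond to repeated values of $R$ on the branch. I would parametrize the analytic branch from \cite{W,W2} by an analytic parameter $s \in [0,s_*)$, with $s = 0$ at the critical laminar flow $(U_*, d_*)$ (where $R = R_c$) and $s \to s_*$ at the extreme wave with the $120^\circ$ Stokes corner guaranteed by (\ref{Bov20a}). Then $R(s)$ is real analytic on $[0, s_*)$, and $R$ is increasing from $R_c$ for small $s$ since small-amplitude solitary waves branch off the critical laminar flow into the supercritical regime where $R>R_c$. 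The derivative $R'(s)$ will play the central role.

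The main technical step is to prove that $R'(s)$ has infinitely many zeros accumulating as $s \to s_*$. I would relate $R'(s)$ to a solvability condition for the linearized problem. Differentiating (\ref{K2a}) in $s$ and using that the flow force on the branch equals $S_-(R(s))$ together with the expression (\ref{Sep29aa}), one should obtain a Hadamard-type identity in which $R'(s)$ is the pairing of the tangent vector along the branch with a solution of the formally adjoint linearization. The extreme-wave limit generates a corner singularity, and the spectral asymptotics of the linearization $L_s$ as $s \to s_*$ are governed by an edge-operator model on a sector whose indicial exponents are complex with non-zero imaginary part; this is the mechanism that should force infinitely many sign changes of an appropriate Morse-type index and hence infinitely many zeros of $R'(s)$. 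I would then define the first bifurcation point as the first zero $s_1$ of $R'$ on $(0,s_*)$, and the accumulation statement guarantees that every neighborhood of $s_1$ in the branch already contains infinitely many further critical points of $R$ lying closer to $s_*$.

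Around each non-degenerate critical point $s_k$, analyticity gives a local folded representation $R(s) = R(s_k) - c_k(s-s_k)^2 + O((s-s_k)^3)$ with $c_k \neq 0$, so for every $\rho$ in a one-sided neighborhood of $R(s_k)$ there exist two distinct parameters $s \neq s'$ near $s_k$ with $R(s) = R(s') = \rho$. The corresponding solutions $(\Psi_s, \xi_s)$ and $(\Psi_{s'},\xi_{s'})$ are distinct solitary waves on the injective part of the branch sharing the Bernoulli constant $\rho$. Carrying out this construction at the infinitely many critical points accumulating in any neighborhood of $s_1$ produces the infinitely many distinct pairs asserted in the theorem.

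The principal obstacle is the spectral assertion in Step 2: in the rotational setting one cannot invoke conformal mappings and the harmonic-conjugate machinery that Plotnikov used for the irrotational problem, so the spectral analysis of the free-boundary linearization near a Stokes corner must be carried out directly. A careful asymptotic construction of near-kernel elements of $L_s$ by matched asymptotics at the crest, together with a min-max or monotonicity argument to track sign changes of the critical eigenvalue as $s \to s_*$, is what I expect to consume most of the technical effort in Proposition \ref{Pr26}; everything else in the proof of Theorem \ref{T12-2} is essentially a consequence of analyticity of the branch.
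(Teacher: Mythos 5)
Your proposal takes a genuinely different route from the paper, and it has two serious gaps. First, your central technical claim --- that $R'(s)$ has infinitely many zeros accumulating at the extreme-wave limit $s_*$, forced by complex indicial exponents of an edge operator at the $120^\circ$ corner --- is precisely the Plotnikov-type oscillation mechanism that the paper explicitly states it \emph{cannot} reproduce in the rotational setting; you acknowledge this is the ``principal obstacle'' but offer no argument for it, and nothing in the paper establishes it. The paper instead needs only \emph{one} point $t_*$ at which the second eigenvalue $\mu_1(t)$ of the linearized (Frechet-derivative) operator crosses zero; Proposition \ref{PrNov11} (quoting \cite{Koz1b}) derives the existence of such a $t_*$ from the hypothesis (\ref{Bov20a}), and the whole machinery of Sections 2--3 (Theorems \ref{ThJ3}, \ref{ThJ3a}, \ref{ThF18}, \ref{TrF18b}) is devoted to constructing the global secondary branch bifurcating there. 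Theorem \ref{Tsep27} then produces the pairs: if $t_*$ is a turning point of $R(t)$, the fold alone gives a continuum of values $R$ near $R_*$ each realized by two distinct waves; if $R(t)$ is strictly monotone through $t_*$, the pairs come from matching the original branch against the bifurcating branch (or, in the vertical case $t(s)\equiv t_*$, from the whole bifurcating curve at the single value $R(t_*)$). Your proposal has no mechanism at all for the monotone case, which the paper cannot and does not exclude.

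Second, even granting your accumulation claim, your argument does not prove the stated theorem. The theorem asserts infinitely many pairs \emph{in any neighborhood of the first bifurcation point}. If the zeros of $R'$ accumulate only at $s_*$, then a small neighborhood of the first zero $s_1$ contains only finitely many of them, so your construction would place the infinitely many pairs near the extreme wave, not near $s_1$; your sentence claiming that every neighborhood of $s_1$ contains infinitely many further critical points does not follow from accumulation at $s_*$. The paper gets ``infinitely many pairs in any neighborhood of $t_*$'' for free from a single fold or a single secondary branch, because the pairs are parametrized by a continuum of Bernoulli constants $R$ in a one-sided interval at $R_*=R(t_*)$. Note also that the paper's ``first bifurcation point'' is defined spectrally (the first zero crossing of $\mu_1$), not as the first critical point of $R$; the two need not coincide.
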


A proof of this assertion follows from Theorem \ref{Tsep27}, which is proved  in Sect. \ref{SSep28}.

\subsection{Main steps of the proof. Global branch of solitary waves and its first bifurcation.}

Let us introduce several function spaces. For $\alpha\in (0,1)$ and $k=0,1,\ldots$, the space $C^{k,\alpha}(D)$ consists of bounded  functions in $D$ such that the semi-norms \\ $||\cdot||_{C^{k,\alpha}(D_{(a,a+1)})}$ are uniformly bounded with respect to $a\in{\mathbb R}$. Here
$$
D_{a,a+1}=\{(X,Y)\in \overline{D},\;:\,a\leq X\leq a+1\}.
$$
The space    $C^{k,\alpha}_{0}(D)$  consists of function in $C^{k,\alpha}(D)$  vanishing for $Y=0$. The space $C^{k,\alpha}_{0, e}(D)$ consists of even functions from $C^{k,\alpha}_0(D)$.
Similarly we define the space    $C^{k,\alpha}_{ e}(\Bbb R)$) consisting of even functions in $C^{k,\alpha}(\Bbb R)$.

We will consider a branch of solitary  waves depending on a parameter $t> 0$, i.e.
\begin{equation}\label{J3b}
\Psi=\Psi(X,Y;t),\;\;\xi=\xi(X,t),\;\;R=R(t),\;\;t\in (0,\infty),
\end{equation}
where $R$ is the Bernoulli constant.

We assume that the above branch of solitary waves satisfies the conditions of the following theorem, see \cite{W} and \cite{Koz1b}.

\begin{theorem}\label{T1}  Fix a H{\"o}lder exponent $\alpha\in(0, 1/2]$. There exists a continuous curve {\rm (\ref{J3b})}
of solitary wave solutions to {\rm (\ref{K2a})} with the regularity
\begin{equation}\label{Au30a}
(\Psi(t),\xi(t), R(t))) \in C^{2,\alpha}_0( D(t))\times C^{2,\alpha}({\Bbb R})\times (R_c,\infty),
\end{equation}
where $D(t):=D_{\xi(t)}$. The solution curve satisfies  the following property (critical laminar flow, i.e. $R=R_c$)
\begin{equation}\label{Ju18aa}
\lim_{t\to 0}(\Psi(t),\xi(t),R(t))=(U_*,d_*,R_c).
\end{equation}
The convergence is understood in the spaces
$$
C^{2,\alpha}(D(t)) \times C^{2,\alpha}({\Bbb R})\times \Bbb R.
$$

Furthermore, there exists a sequence $\{t_j\}_{j=1}^\infty$ such that $t_j\to\infty$ as $j\to\infty$ and $(X_j,Y_j)\in D(t_j)$ such that the following  is valid
\begin{equation}\label{Au25a}
 (\Psi(t_j))_Y(X_j,Y_j)\to 0\;\; \mbox{as $j\to\infty$}\;\;\;\mbox{(Stagnation)}.
 \end{equation}

\end{theorem}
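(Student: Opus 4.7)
The plan is to follow a global analytic-bifurcation strategy in three stages: fix the domain, bifurcate locally from the critical laminar flow, and then continue globally while classifying the end of the branch.

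First I would remove the free boundary via the partial hodograph (Dubreil--Jacotin) change of variables $(X,Y)\mapsto(q,p):=(X,\Psi(X,Y))$. Under (\ref{J27ba}) this is a diffeomorphism from $\overline{D_{\xi}}$ onto the fixed strip $\Bbb R\times [0,1]$. Taking the height $h(q,p)=Y$ as the unknown, (\ref{K2a}) becomes a quasilinear elliptic equation on the strip, with $h=0$ at $p=0$ and a nonlinear Bernoulli condition $(1+h_q^2)/(2h_p^2)+h=R$ at $p=1$. The supercritical laminar flows are the $q$-independent solutions $h=H(p;R)$, and solitary waves correspond to $h-H(\cdot;R)$ lying in a Banach space of even, $q$-decaying $C^{2,\alpha}$ perturbations.

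Next, local bifurcation at $R=R_c$. The linearisation at $(H(\cdot;R_c),R_c)$ is Fredholm of index zero with one-dimensional kernel spanned by an exponentially localised KdV-type solitary mode --- this is precisely where the criticality $F=1$ enters. A Crandall--Rabinowitz / Lyapunov--Schmidt reduction produces a small analytic curve $t\mapsto (h(t),R(t))$ with $(h(0),R(0))=(H(\cdot;R_c),R_c)$, giving (\ref{Ju18aa}) after inversion of the partial hodograph. Since the transformed operator is jointly analytic in $(h,R)$ on the open set $\{h_p>0\}$, the Dancer--Buffoni--Toland analytic continuation theorem extends this local curve to a maximal connected continuous branch, analytic up to re-parameterisation, yielding (\ref{Au30a}).

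Finally, one must classify the end of the branch. The standard global alternative leaves four possibilities: (i) the branch returns to the laminar sheet; (ii) $\inf h_p\to 0$, i.e.\ stagnation; (iii) the $C^{2,\alpha}$ norm blows up while $h_p$ stays bounded away from zero; (iv) horizontal translation compactness is lost. Monotonicity of the flow force $S_-(R)$ in $R$ combined with a nodal argument for $\xi-d$ excludes (i) except in the limit $t\to 0$; uniform Schauder estimates on the flattened strip, together with the Bernoulli relation and a compactness bound $R(t)\in [R_c+\varepsilon,R_c^{-1}]$ obtained from the flow-force identity (\ref{Sep29aa}), exclude (iii) as long as $\inf h_p>0$; and a concentration-compactness argument propagating the laminar asymptotic (\ref{Ju31a}) uniformly in $t$ excludes (iv), because any horizontally translated weak limit would itself be a nontrivial solitary wave at some finite parameter value, contradicting maximality. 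What remains is (ii), which is (\ref{Au25a}). The principal obstacle is precisely this last step: on the unbounded strip one must control vertical decay at $q\to\pm\infty$, horizontal translation non-compactness, and the approach $h_p\to 0$ simultaneously and uniformly along the branch, and convert every breakdown other than stagnation into an accumulation point contradicting maximality. This uniform regularity and compactness theory is the heart of the arguments in \cite{W} and \cite{Koz1b}.
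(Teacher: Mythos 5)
First, note that the paper does not actually prove Theorem \ref{T1}: it is imported wholesale from Wheeler \cite{W} and from \cite{Koz1b} (``We assume that the above branch of solitary waves satisfies the conditions of the following theorem, see \cite{W} and \cite{Koz1b}''). So your proposal is being measured against the cited literature rather than against an argument contained in this paper.

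The serious gap is in your second stage. You propose a Crandall--Rabinowitz/Lyapunov--Schmidt bifurcation at $(H(\cdot;R_c),R_c)$, asserting that the linearisation there is Fredholm of index zero with one-dimensional kernel spanned by an exponentially localised mode. This is false at the critical Froude number. On the unbounded strip the linearisation at a laminar flow has essential spectrum $[\nu_0,\infty)$ (cf.\ the proposition on the spectrum of ${\mathcal A}(t)$ in the paper), and criticality $F=1$ is precisely the condition $\nu_0=0$: zero sits at the edge of the continuous spectrum, there is no isolated simple eigenvalue, no spectral gap, and no exponentially decaying kernel element. Consequently neither Crandall--Rabinowitz nor the Dancer--Buffoni--Toland analytic continuation theorem applies at this point. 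The small-amplitude solitary waves have to be produced by a long-wave/KdV scaling or a spatial-dynamics (centre-manifold) reduction, and the global continuation in \cite{W} is carried out with a degree theory adapted to the loss of compactness on the unbounded domain (properness of the operator on suitable subsets), with analyticity up to local reparametrisation only established afterwards in \cite{W2}. Your third stage also leans on unexplained bounds (the interval $[R_c+\varepsilon,R_c^{-1}]$ is not meaningful as written) and on ``maximality'' contradictions that presuppose the analytic-continuation framework you have not justified; in \cite{W} the exclusion of the competing alternatives rests on quantitative upper bounds for the Froude number and lower bounds for the depth along the branch, not on a nodal argument for $\xi-d$. Until the small-amplitude existence and the non-compact global continuation are addressed by these means, the proposal does not establish \eqref{Au30a}--\eqref{Au25a}.
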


\begin{remark}

{\rm (i)} The dependence on $t\in (0,\infty)$ in (\ref{Au30a}) is analytic in the variables $(q,p)$ using in partial hodograph transformation, see \cite{W2}, Sect.6.

{\rm (ii)} In paper \cite{W} the Froude number $F(t)$ is taken instead of $R(t)$. Since the dependence $R$ on $F$ is monotone and analytic (see \cite{Koz1b}), we use here the function $R(t)$.
\end{remark}

In the  next proposition  we present a clarification of the stagnation condition (\ref{Au25a}) by replacing it by conditions containing stagnation points on the free surface or on the bottom and by overhanging points on the free surface. Introduce the notations
\[
\theta_0=\sqrt{2\max_{\tau\in[0,1]}\Omega(\tau)},\;\;R_0={\mathcal R}(\theta_0),
\]
where
\[
{\mathcal R}(\theta)=\frac{1}{2}\theta^2+\int_0^1\frac{d\tau}{\sqrt{\theta^2-2\Omega(\tau)}}-\Omega(1).
\]

\begin{proposition}\label{Pr26} The stagnation condition {\rm (\ref{Au25a})} is equivalent to one of  the following options

{\rm (i)} If $R_0=\infty$ or $R_0<\infty$ and $\Omega(1)>0$ then  there exists a sequence $\{t_j\}_{j\geq 1}$, $t_j\to\infty$ as $j\to\infty$  such that
\begin{equation}\label{Au29ba}
\lim_{j\to\infty}|\xi'(t_j)(0)|=\infty\;\;\mbox{overhanging wave}
\end{equation}
or
\begin{equation}\label{Au29bb}
\lim_{j\to\infty}R(t_j)=R\;\;\mbox{and}\;\;\lim_{j\to\infty}\xi(t_j)(0)=R.\;\;\mbox{surface stagnation}
\end{equation}

{\rm (ii)} If $R_0<\infty$ and $\theta_0=0$ then besides the options in {\rm (i)} there is one more option:
\begin{equation}\label{Au29bc}
\lim_{j\to\infty}R(t_j)=R\;\;\mbox{and}\;\;\lim_{j\to\infty}\sup_{X\in\Bbb R}\Psi_Y(t_j)(X_j,Y_j)=0,\;\;\mbox{bottom stagnation}
\end{equation}
where $Y_j\to 0$ as $j\to \infty$.

\end{proposition}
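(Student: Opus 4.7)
The plan is to prove both implications. The forward direction (each alternative implies the stagnation condition) follows quickly from the Bernoulli condition on $S_\xi$; the reverse direction is a blow-up/compactness argument, where the separation between cases (i) and (ii) is driven by the sign of $\Omega(1)$ and the limit crest-angle parameter $\theta_0$.

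\emph{Forward direction.} Differentiating $\Psi(X,\xi(X))=1$ along the free surface gives $\Psi_X=-\xi'\Psi_Y$, so $|\nabla\Psi|^2=(1+(\xi')^2)\Psi_Y^2$ on $S_\xi$, and the Bernoulli condition becomes
$$
\tfrac12\bigl(1+(\xi')^2\bigr)\Psi_Y^2+\xi=R.
$$
Consequently, either $|\xi'(t_j)(0)|\to\infty$ or $\xi(t_j)(0)\to R$ forces $\Psi_Y\to 0$ at the crest, giving (\ref{Au25a}); the bottom stagnation option contains (\ref{Au25a}) tautologically.

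\emph{Reverse direction.} Given a stagnation sequence $(t_j,X_j,Y_j)$ satisfying (\ref{Au25a}), I would argue by dichotomy. If $\|\xi'(t_j)\|_{L^\infty(\Bbb R)}\to\infty$ along a subsequence, then by the evenness and monotone decrease of $\xi(t_j)$ on $[0,\infty)$ the blow-up must localize at the crest, yielding (\ref{Au29ba}). Otherwise $\|\xi'(t_j)\|_\infty$ stays bounded; translating by $\tilde\Psi_j(X,Y):=\Psi(t_j)(X+X_j,Y)$, $\tilde\xi_j(X):=\xi(t_j)(X+X_j)$, one obtains solutions of (\ref{K2a}) on uniformly Lipschitz graph domains. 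When $R(t_j)$ stays bounded, Schauder estimates with uniform H\"older control of the coefficients produce a subsequence converging in $C^{2,\alpha'}_{\text{loc}}$ to a limit $(\Psi_*,\xi_*,R_*)$ solving (\ref{K2a}) on $D_{\xi_*}$, with $(\Psi_*)_Y\ge 0$ and $(\Psi_*)_Y(0,Y_*)=0$ for some $Y_*\in[0,\xi_*(0)]$. The possibility $R(t_j)\to\infty$ (permitted only in case (i) with $R_0=\infty$) is treated by an analogous rescaled passage to the limit; here the non-degeneracy of the asymptotic laminar flow at the bottom prevents bottom stagnation and channels the degeneration into (\ref{Au29ba}) or (\ref{Au29bb}).

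The next step is to localize $Y_*$. Differentiating $\Delta\Psi_*+\omega(\Psi_*)=0$ in $Y$ shows that $W:=(\Psi_*)_Y$ satisfies the linear elliptic equation
$$
\Delta W+\omega'(\Psi_*)\,W=0.
$$
Since $W\ge 0$ and $W\equiv 0$ is incompatible with $\Psi_*=0$ on $B$ and $\Psi_*=1$ on $S_{\xi_*}$, the strong maximum principle excludes interior zeros, forcing $Y_*\in\{0,\xi_*(0)\}$. The top alternative, combined with the Bernoulli identity above, forces $\xi_*(0)=R_*$, which unwinds to (\ref{Au29bb}). The bottom alternative, combined with Hopf's lemma and the $X$-translation invariance inherited in the limit, propagates $(\Psi_*)_Y(0,0)=0$ to $\sup_{X\in\Bbb R}(\Psi(t_j))_Y(X,Y_j)\to 0$ with $Y_j\to 0$, giving (\ref{Au29bc}). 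To separate (i) from (ii), I would show that when $\Omega(1)>0$ the asymptotic ODE $U_-''+\omega(U_-)=0$, $U_-(0)=0$, $U_-(d_-)=1$, yields a uniform positive lower bound on $U_-'(0)$ along the branch; by comparison this bound propagates to $\Psi_Y|_{Y=0}$ and excludes bottom stagnation. Only when $R_0<\infty$ together with $\theta_0=0$, that is, a degeneracy of the limit surface profile, does the bottom option become compatible.

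\emph{Main obstacle.} The principal difficulty is the compactness passage in the non-overhanging regime: uniformly controlling the depth and the H\"older norms of $(\tilde\Psi_j,\tilde\xi_j)$ when $R(t_j)$ can be large, and keeping track of where the stagnation is ``caught'' in the limit. A secondary subtle point is the quantitative use of $\Omega(1)>0$ and the role of $\theta_0$ in decoupling the three alternatives; this is where the dependence on the vorticity enters critically.
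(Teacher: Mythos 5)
Your overall architecture matches the paper's: the forward implication via the surface Bernoulli identity, the exclusion of interior stagnation by a Harnack/strong-maximum-principle step (the paper's Lemma \ref{Lokt19}), and a comparison with uniform streams to control $\Psi_Y$ on the bottom (the paper's Lemma \ref{LM28a}); the paper itself delegates everything except the case $R_0<\infty$, $\Omega(1)>0$ to \cite{Koz1b}. However, the crucial step of your argument --- the one that actually separates case (i) from case (ii) --- is mis-argued. You claim that for $\Omega(1)>0$ a uniform positive lower bound on $U_-'(0)$, the bottom velocity of the asymptotic supercritical stream, ``propagates by comparison'' to a lower bound on $\Psi_Y|_{Y=0}$. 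The comparison between the wave and its own limiting stream goes the wrong way: in the hodograph variables one has $h\geq H_-$ with equality at $p=0$ (the solitary wave lies above its limiting stream, $\xi>d_-$), hence $h_p(q,0)\geq (H_-)_p(0)$ and therefore $\Psi_Y(X,0)=1/h_p(q,0)\leq U_-'(0)$ --- an upper bound, not a lower one. The lower bound that is actually available is $\Psi_Y(X,0)\geq\theta_0=\sqrt{2\max_{\tau}\Omega(\tau)}$ when $R_0<\infty$ and $\Omega(1)>0$ (Lemma \ref{LM28a}(iii)), and $\Psi_Y(X,0)\geq\sqrt{s^2-s_0^2}$ when $R_0=\infty$ (Lemma \ref{LM28a}(ii)); it comes from a comparison \emph{from above}, $h\leq H(\cdot\,;\theta)$ with $\theta$ close to $\theta_0$ (essentially the subcritical conjugate stream, available when $R<R_0$). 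This is precisely where the hypotheses $R_0=\infty$, respectively $R_0<\infty$ together with $\Omega(1)>0$, enter, and it is not a consequence of the non-degeneracy of the supercritical asymptotic stream. As stated, your mechanism would not survive being written out, even though the conclusion you draw from it is the correct one.

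A secondary point: in the bottom-stagnation alternative you set out to prove $\sup_{X\in\Bbb R}(\Psi(t_j))_Y(X,Y_j)\to 0$. Taken literally this is impossible: as $X\to\pm\infty$ the wave converges to the supercritical stream, so $\sup_X\Psi_Y(t_j)(X,Y_j)\geq U'(Y_j)\geq\sqrt{\theta^2-\theta_0^2}>0$ with $\theta>\theta_c$. The intended reading of (\ref{Au29bc}) is an infimum (or the value along the stagnating sequence $(X_j,Y_j)$), and your Hopf-lemma/translation-invariance step should target that version rather than the literal one.
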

 A proof of this proposition is given in Remark \ref{R21a}.

\begin{remark}\label{RNov22a} Consider the case (i) in the above proposition. In the paper Strauss, Wheeler \cite{StrWh} a certain condition on the vorticity $\omega$
 was suggested,  which  guarantees that the property (\ref{Au29ba}) fails.

 The case (\ref{Au29bb}) is closely related to the assumption (\ref{Bov20a}). As it is shown in \cite{KL1} (see also \cite{Koz1}) there exists a weak limit of solitary waves approaching an extreme wave with the angle $120^\circ$ or $180^\circ$ at the crest. 

\end{remark}

The Frechet derivative for the problem in the periodic case, written after partial hodograph change of variables, is evaluated in  \cite{Koz1}, where an equivalent form for the Frechet derivative in $(X,Y)$ variables is presented also. The same evaluation can be used to obtain the Frechet derivative in the case, when the solution is a solitary wave.  The corresponding eigenvalue problem for the  Frechet derivative in $(X,Y)$ variables has the form
\begin{eqnarray}\label{J17ax}
&&\Delta w+\omega'(\Psi)w+\mu w=0 \;\;\mbox{in $D_\xi$},\nonumber\\
&&\partial_n w-\rho w=0\;\;\mbox{on $S_\xi$},\nonumber\\
&&w=0\;\;\mbox{for $Y=0$},
\end{eqnarray}
where $n$ is the unite outward normal to $S_\xi$, $w$ is a periodic, even function and
\begin{equation}\label{Sept17aa}
\rho=
\rho(X)=\frac{(1+\Psi_X\Psi_{XY}+\Psi_Y\Psi_{YY})}{\Psi_Y(\Psi_X^2+\Psi_Y^2)^{1/2}}\Big|_{Y=\xi(X)}.
\end{equation}

Denote by ${\mathcal A}(t)={\mathcal A}_{(\Psi,\xi)}$ the unbounded operator in $L^2(D_\xi)$ corresponding to the problem (\ref{J17ax}), i.e.
${\mathcal A}_{(\Psi,\xi)}=-\Delta -\omega'(\Psi)$ with the domain
\begin{eqnarray*}
&&{\mathcal D}(t)={\mathcal D}({\mathcal A}_{(\Psi,\xi)})=\{w\in H^2(D_\xi)\,:\,w(X,0)=0,\;\;\partial_\nu w-\rho w=0\, \mbox{for}\; Y=\xi(X)\\
&&\;\;\mbox{and $w$ is even}\}.
\end{eqnarray*}
For every $t>0$ we denote by
\begin{equation}\label{Au27b}
(U(t),d(t),R(t))
\end{equation}
 the limit uniform stream solution and the corresponding Bernoulli constant in the asymptotics (\ref{Ju31a}) of solution (\ref{J3b}).

To describe spectrum of the problem (\ref{J17ax}) we introduce  the one-dimensional eigenvalue problem
\begin{eqnarray}\label{Okt25az}
&& -v^{''}-\omega'(U(t))v=\nu v\;\;\mbox{on $(0,d(t))$},\nonumber\\
&&v(0)=0\;\;\mbox{and}\;\;v'(d(t))-\rho_0(t)v(d(t))=0,
\end{eqnarray}
where
\begin{equation}\label{Okt25a}
\rho_0(t)=\frac{1+U_YU_{YY}}{U_Y^2}\Big|_{Y=d(t)}.
\end{equation}
In Proposition \ref{PrAp8a} it is proved that the eigenvalues of this problem are positive for all $t>0$, since the Froude number corresponding to the uniform stream solution (\ref{Au27b}) is larger than $1$. We denote by $\nu_0=\nu_0(t)$ the lowest eigenvalue.  This eigenvalue is simple and corresponding eigenfunction does not change sign in $(0,d(t)]$.

The following properties of the operator ${\mathcal A}$ can be easily verified and there proofs can be found in \cite{Koz1b}.
\begin{proposition}
{\rm (i)} For every $t>0$ the operator   ${\mathcal A}$ has a continuous spectrum $[\nu_0(t),\infty)$. The spectrum on the half line $(-\infty,a]$ consists of finite number of isolated eigenvalues of finite multiplicity for any $a<\nu_0(t)$.

{\rm (ii)} The lowest eigenvalue of the operator ${\mathcal A}(t)$ for $t>0$  is always negative and simple with the eigenfunction which does not change sign in $D(t)$.
We denote it by $\mu_0(t)$.

{\rm (iii)} For small positive $t$ the spectrum of the operator ${\mathcal A}(t)$ on the half line $\mu\leq 0$ consists of the  eigenvalue $\mu_0(t)$ only.
\end{proposition}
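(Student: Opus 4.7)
My plan for part (i) is to identify the essential spectrum of $\mathcal{A}(t)$ with that of a ``limit operator'' $\mathcal{A}_\infty(t)=-\Delta-\omega'(U(t))$ defined on the uniform strip $\{0<Y<d(t)\}$ with Dirichlet on $Y=0$ and the Robin condition $\partial_Y w-\rho_0(t)w=0$ on $Y=d(t)$. On $\mathcal{A}_\infty(t)$, Fourier transform in $X$ combined with the Sturm--Liouville problem \eqref{Okt25az} yields $\sigma(\mathcal{A}_\infty(t))=[\nu_0(t),\infty)$, purely continuous. Since $(\Psi(t),\xi(t))\to (U(t),d(t))$ as $X\to\pm\infty$, the coefficient difference $\omega'(\Psi)-\omega'(U(t))$ together with the geometric discrepancy in the domain and the boundary coefficient $\rho$ vanish at infinity. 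A standard Weyl-type argument based on a partition of unity and a relatively compact perturbation then gives $\sigma_{\mathrm{ess}}(\mathcal{A}(t))=[\nu_0(t),\infty)$. Below this essential spectrum the min--max principle plus compactness of the resolvent restricted to the spectral subspace below $a<\nu_0(t)$ yield at most finitely many eigenvalues of finite multiplicity.

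For part (ii), the associated quadratic form $Q_t(w)=\int_{D(t)}(|\nabla w|^2-\omega'(\Psi)w^2)\,dX\,dY-\int_{S_{\xi(t)}}\rho w^2\,d\sigma$ on the form domain $\{w\in H^1(D(t)):w(X,0)=0\}$ is coercive below the essential spectrum, so a minimizer at the level $\mu_0(t)$ exists whenever $\mu_0(t)<\nu_0(t)$. A Perron--Frobenius / Krein--Rutman argument — $|w_0|$ is also a minimizer, hence solves the eigenvalue equation, hence is strictly positive on $D(t)$ by the strong maximum principle and the Hopf boundary-point lemma on the connected domain — gives simplicity and the claimed sign. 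For negativity, I would exhibit a trial function $w_\ast$ with $Q_t(w_\ast)<0$. The natural candidate uses that $\Psi_Y$ satisfies the bulk equation $\Delta\Psi_Y+\omega'(\Psi)\Psi_Y=0$ in $D(t)$; multiplying by a localized cutoff $\chi(X)\in C^\infty_c(\Bbb R)$ and integrating by parts transfers the calculation to boundary terms, where the geometric nontriviality of the wave ($\xi>d$, together with \eqref{Sept17aa}) produces a strictly negative contribution for an appropriate choice of $\chi$. Alternatively, one compares $\Psi(\cdot;t)$ with the laminar profile $U(\cdot;t)$ through a difference quotient along the branch.

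For part (iii), my plan is to combine the convergence $(\Psi(t),\xi(t),R(t))\to(U_\ast,d_\ast,R_c)$ of Theorem \ref{T1} with the critical condition $F=1$ at $R=R_c$, which forces $\nu_0(t)\to 0$ as $t\to 0^+$. The limit operator $\mathcal{A}_\infty(0)$ then has spectrum $[0,\infty)$ with no genuine $L^2$ eigenvalues (the edge is at most a resonance). Analytic perturbation theory, exploiting the analyticity of the branch noted in Remark~(i), then controls the discrete spectrum inside $(-\infty,\nu_0(t))$: exactly one eigenvalue splits off into the negative half-line as $t$ leaves $0$, namely the simple ground state $\mu_0(t)$ of part (ii), while no second eigenvalue can emerge from above because of the explicit form of $\sigma(\mathcal{A}_\infty(0))$.

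The main obstacle I expect is the negativity $\mu_0(t)<0$ for \emph{every} $t>0$ along the global branch, not merely near $t=0$. Locally at the bifurcation from the critical laminar flow this is a straightforward perturbation calculation, but globally one must exclude a crossing $\mu_0(t_0)=0$ at some intermediate $t_0$. This requires combining the simplicity of $\mu_0$ (and hence its analytic dependence on $t$, up to the re-parameterization of Theorem \ref{T1}) with a transversality argument: either the implicit function theorem shows that the solitary-wave curve cannot remain regular through $\mu_0=0$ off a true bifurcation point, or an actual bifurcation at $t_0$ would have to produce a secondary branch of solutions transverse to the analytic curve, which is incompatible with its construction; the sign of $\mu_0$ on either side is then pinned down by the trial function from part (ii).
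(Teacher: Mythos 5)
First, note that the paper does not actually prove this proposition: it states that the properties ``can be easily verified'' and defers to \cite{Koz1b}. So the comparison below is against the standard arguments that reference supplies, not against an in-text proof. Your part (i) (limit operator on the uniform strip, Fourier transform in $X$ combined with the Sturm--Liouville problem \eqref{Okt25az}, Weyl-type stability of the essential spectrum under perturbations decaying at infinity, min--max below $\nu_0(t)$) is exactly the standard route and is fine. The Perron--Frobenius part of (ii) (simplicity and sign of the ground state via $|w_0|$, the strong maximum principle and Hopf's lemma) is also fine.

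The genuine gap is in your proof of \emph{negativity} in (ii). Your proposed trial function built from $\Psi_Y$ is inadmissible: $\Psi_Y(X,0)>0$ by unidirectionality \eqref{J27ba}, so $\chi(X)\Psi_Y$ does not satisfy the Dirichlet condition on the bottom and does not lie in the form domain $\{w\in H^1:\,w(X,0)=0\}$; moreover you never actually produce the ``strictly negative contribution''. The clean argument is the \emph{translation mode}: differentiating the solitary wave in $X$ (the function $h_q$ in the hodograph formulation, equivalently $\Psi_X$ together with the surface correction $\xi'$) gives an exact solution of the linearized problem \eqref{J17ax} with $\mu=0$; it vanishes on $Y=0$ because $\Psi\equiv 0$ there, decays exponentially since the wave is solitary, and is \emph{odd} in $X$, hence sign-changing because $\xi'<0$ for $X>0$. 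Since the ground state of $\mathcal{A}(t)$ is simple and sign-definite (and therefore even, by the symmetry of the problem), $0$ cannot be the lowest eigenvalue, so $\mu_0(t)<0$ for \emph{every} $t>0$. This also dissolves what you identify as the ``main obstacle'': no continuation or transversality argument along the branch is needed, because the zero mode exists at each fixed $t$. Separately, in (iii) the decisive step is precisely the one you wave at: since $\nu_0(t)\to 0$ as $t\to 0$ (the Froude number tends to $1$), negative eigenvalues could in principle detach from the \emph{edge} of the continuous spectrum, and ``the explicit form of $\sigma(\mathcal{A}_\infty(0))$'' does not by itself exclude a second one. One needs a quantitative counting argument (e.g.\ a Birman--Schwinger or form-comparison bound showing that the perturbation of the critical laminar operator by the small solitary wave is too weak to produce two bound states, on the orthogonal complement of the ground state), which is what \cite{Koz1b} supplies and your sketch omits.
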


The next result is important for our study of the first bifurcation and its proof  can be found in \cite{Koz1b}

\begin{theorem}\label{T1a}
If for a certain $t_1>0$ and $t\in (0,t_1)$ the operator ${\mathcal A}(t)$  has the only negative eigenvalue $\mu_0$ and $\mu=0$ is the eigenvalue of ${\mathcal A}(t_1)$  then this eigenvalue is simple.

\end{theorem}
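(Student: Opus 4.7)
The plan is to argue by contradiction. Assume $\dim \ker \mathcal{A}(t_1) \geq 2$ at the eigenvalue $\mu = 0$, and combine analytic perturbation theory along the solitary-wave branch with the exponential decay of kernel elements at $|X| \to \infty$.

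First, by the remark after Theorem~\ref{T1}, the family $t \mapsto (\Psi(t), \xi(t))$ is analytic in the partial-hodograph variables, so the pullback of $\{\mathcal{A}(t)\}$ to the fixed strip is a self-adjoint analytic family of type (A) in Kato's sense. The Rellich--Kato theorem then produces real-analytic branches of eigenvalues and eigenfunctions; the multiplicity assumption yields two such branches $\lambda_1(t), \lambda_2(t)$ with $\lambda_1(t_1) = \lambda_2(t_1) = 0$ and real-analytic eigenfunctions $e_1(t), e_2(t)$. The hypothesis that $\mu_0(t)$ is the only negative eigenvalue of $\mathcal{A}(t)$ for $t \in (0, t_1)$ forces $\lambda_j(t) \geq 0$ in a left neighborhood of $t_1$, so both branches approach $0$ from above. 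By Proposition (ii), $e_j(t_1) \perp w_0$, where $w_0$ is the sign-preserving ground state associated to $\mu_0(t_1) < 0$, so both kernel elements must change sign.

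Next I would analyse the decay at infinity. Since $\mu = 0$ lies strictly below the threshold $\nu_0(t_1) > 0$ of the essential spectrum, standard Agmon-type estimates give every $w \in \ker \mathcal{A}(t_1)$ an asymptotic expansion
\[
w(X, Y) = A\, v_0(Y)\, e^{-\sqrt{\nu_0(t_1)}\,|X|} + o\bigl(e^{-\sqrt{\nu_0(t_1)}\,|X|}\bigr), \qquad |X| \to \infty,
\]
where $v_0$ is the principal eigenfunction of the 1D problem \eqref{Okt25az} (evenness giving equal coefficients at $\pm\infty$). The coefficient functional $w \mapsto A$ is linear, so under $\dim \ker \geq 2$ I can choose a nontrivial combination $w^{*} = \beta_1 e_1(t_1) + \beta_2 e_2(t_1)$ with $A = 0$, decaying strictly faster than the fundamental rate.

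The contradiction should come from ruling out such an anomalously decaying kernel element. The route I would try is to project the equation $\mathcal{A}(t_1) w^{*} = 0$ against $v_0$ over vertical slices, using the bottom Dirichlet condition and the free-surface Robin condition to integrate by parts in $Y$; the calculation is analogous to the one showing that the Wronskian-type integral $\int_{0}^{\xi(X)} (w_1 \partial_X w_2 - w_2 \partial_X w_1)\, dY$ is independent of $X$. This should reduce the problem to a coupled system of ODEs for the coefficients $\varphi_k(X) = \int w^{*}(X, Y)\, v_k(Y)\, dY$ whose leading part at infinity is the diagonal one $\varphi_k'' = \nu_k(t_1) \varphi_k$, coupled through terms decaying with the solitary-wave profile. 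An $L^2$, even, anomalously decaying solution of this system with $A = 0$ must then vanish identically by an ODE shooting/unique-continuation-at-infinity argument applied inductively to the modes, contradicting $w^{*} \neq 0$. The main obstacle is making this reduction rigorous in the presence of the nontrivial interface $\xi(X)$ and the spatially varying Robin coefficient $\rho(X)$, which differ from their asymptotic constant values only by exponentially small corrections: the off-diagonal coupling between modes must be controlled by weighted resolvent estimates so that $A = 0$ truly propagates into the vanishing of every $\varphi_k$. This is where I would expect to spend most of the effort, and where the proof in \cite{Koz1b} presumably concentrates its technical work.
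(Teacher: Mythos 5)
First, a point of reference: the paper does not actually prove Theorem \ref{T1a} — it is quoted with a pointer to \cite{Koz1b} — so the comparison below is with the argument given there and in the companion results for Stokes waves (\cite{Koz1}, \cite{KL2}).

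Your reduction of the multiplicity bound to the injectivity of the leading-coefficient functional $w\mapsto A$ on $\ker\mathcal{A}(t_1)$ is legitimate, but the step you defer as ``technical work'' is where the entire mathematical content lies, and the mechanism you propose for it cannot work. A kernel element $w^*$ with $A=0$ is not anomalous in any sense that unique continuation at infinity can detect: since $0<\nu_0(t_1)$ lies strictly below the essential spectrum, such a $w^*$ simply decays at the next admissible rate $e^{-\sqrt{\nu_1(t_1)}\,|X|}$ ($\nu_1$ the second eigenvalue of (\ref{Okt25az})), with the mode $\varphi_0(X)=\int w^* v_0\,dY$ driven at that same rate by the exponentially small off-diagonal coupling — a perfectly consistent solution of your reduced ODE system. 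Agmon-type ``unique continuation at infinity'' only excludes solutions decaying faster than \emph{every} exponential; it does not exclude decay at a higher threshold rate, and for higher eigenvalues of operators of this type two-dimensional eigenspaces (hence kernel elements with $A=0$) genuinely occur. So no contradiction is reachable along this route, and weighted resolvent estimates will not repair it.

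The telltale sign is that your argument never really uses the hypothesis that $\mathcal{A}(t)$ has exactly one negative eigenvalue on $(0,t_1)$, i.e.\ that $0$ is the \emph{second} eigenvalue of $\mathcal{A}(t_1)$; you invoke it only to note that the branches approach $0$ from above and that the eigenfunctions change sign, and then drop both facts. Any correct proof must use this hypothesis essentially, since the statement is false for higher eigenvalues. In \cite{Koz1b} (as in \cite{Koz1}, \cite{KL2}) it enters through Courant's nodal domain theorem: every nonzero element of the zero eigenspace is orthogonal to the positive ground state and, being a second eigenfunction, has exactly two nodal domains; one then examines the admissible nodal configurations of the even functions $w_1\cos\theta+w_2\sin\theta$, using the maximum principle, the Robin condition on $S_\xi$ and the asymptotics at infinity, and shows that a two-dimensional eigenspace forces some combination to acquire a third nodal domain or a degenerate nodal set. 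That nodal-analysis ingredient is absent from your proposal and is what you would need to supply.
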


Denote by $\mu_1(t)$ the second eigenvalue of the operator ${\mathcal A}(t)$ on the half-line $(-\infty,\nu_0)$ if it exists, otherwise we put $\mu_1(t)=\nu_0(t)$.
Assume that
\begin{eqnarray}\label{Okt30a}
&&\mbox{there exists a point $t_*>0$ such that: (i) $\mu_1(t)\geq 0$ for $t\in (0,t_*)$},\nonumber\\
&&\mbox{ $\mu_1(t)<0$ for small positive $t-t_*$}.
\end{eqnarray}
According to Theorem \ref{T1a}(iii) the eigenvalue $\mu_1(t_*)$, which is equal to $0$, is simple. This implies that the crossing number of the Frechet derivative is $1$ at $t_*$.
Since the eigenvalue $\mu_1(t)$ is analytic and due to (\ref{Okt30a}) $\mu_1(t)$ changes sign at $t_*$. Therefore it can be represented as
\begin{equation}\label{F18a}
\mu_1(t)=\sum_{j=m}^\infty \hat{\mu}_j(t-t_*)^j,
\end{equation}
where $m$ is odd and $\hat{\mu}_m<0$.

In the next proposition a condition, when (\ref{Okt30a}) is fulfilled, is given.
 \begin{proposition}\label{PrNov11} \rm {(\cite{Koz1b})}
  Assume that there exists a sequence $\{t_j\}_{j=1}^\infty$ such that the curve {\rm (\ref{J3b})} approaches an extreme solitary wave with the angle
$120^\circ$  at the crest along this sequence. Then there exists a point $t_*$ which satisfies the condition {\rm (\ref{Okt30a})}.
 \end{proposition}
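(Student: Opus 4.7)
The strategy is to exhibit a point $t_*$ at which the second eigenvalue $\mu_1(t)$ of $\mathcal A(t)$ transitions from nonnegative values (attained for small $t$) to negative values (attained along the extremal sequence). Three ingredients enter: (a) from the preceding proposition, part (iii), $\mu_1(t) \geq 0$ for all sufficiently small $t > 0$, since in that regime $\mu_0(t)$ is the only eigenvalue below the essential spectrum $[\nu_0(t),\infty)$; (b) along the sequence $\{t_j\}$ the operator $\mathcal A(t_j)$ acquires at least two negative eigenvalues, forcing $\mu_1(t_j) < 0$ for $j$ large; (c) analytic dependence of the eigenvalues on the parameter (invoking the analytic parameter from \cite{W2}, Sect.~6, see the Remark after Theorem \ref{T1}) allows one to locate a first transition and to apply the expansion (\ref{F18a}).

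The crux is (b). I would argue via the min-max principle for the quadratic form
\[
Q_t(w) = \int_{D_{\xi(t)}} \bigl(|\nabla w|^2 - \omega'(\Psi)\, w^2\bigr)\,dX\,dY - \int_{S_{\xi(t)}} \rho\, w^2\,ds,
\]
associated with $\mathcal A(t)$, where $\rho$ is the boundary coefficient from (\ref{Sept17aa}). It suffices to construct, for $t = t_j$ with $j$ large, a two-dimensional subspace of even $H^1(D_{\xi(t_j)})$-functions vanishing on the bottom on which $Q_{t_j}$ is negative definite. The first direction is the simple positive eigenfunction $w_0(t_j)$ corresponding to $\mu_0(t_j)<0$. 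The second direction exploits the corner singularity: by the Bernoulli relation $\tfrac12|\nabla\Psi|^2 + \xi = R$ on the free surface, surface stagnation at the crest forces $|\nabla\Psi|(0,\xi(0)) \to 0$ along the subsequence, and (\ref{Sept17aa}) then makes $\rho$ blow up near $X = 0$. A test function $\varphi_j$ that is even in $X$ and concentrated near the crest on the scale set by the self-similar $120^\circ$ Stokes corner profile should yield that the boundary term $-\int \rho\,\varphi_j^2\,ds$ dominates $\int(|\nabla\varphi_j|^2 - \omega'(\Psi)\,\varphi_j^2)\,dX\,dY$ as $j \to \infty$. A standard Gram--Schmidt adjustment of $\varphi_j$ against $w_0(t_j)$ preserves the negative sign of $Q_{t_j}$ on the span.

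Once (b) is in hand, set $t_* := \inf\{t > 0 : \mu_1(t) < 0\}$. By (a) and (b), $t_* \in (0,\infty)$, and by definition of the infimum $\mu_1(t) \geq 0$ on $(0,t_*)$. Continuity yields $\mu_1(t_*) = 0$, and analyticity rules out $\mu_1$ being $\geq 0$ throughout a right half-neighborhood of $t_*$ (otherwise by the identity principle it would be so on an open neighborhood, contradicting the defining infimum); hence $\mu_1(t) < 0$ on some $(t_*, t_* + \epsilon)$, which is exactly (\ref{Okt30a}), and the expansion (\ref{F18a}) with odd $m$ and $\mu_m < 0$ then follows automatically. The principal obstacle is the quantitative test-function construction in (b): the concentration scale for $\varphi_j$ must be chosen so that the boundary blow-up strictly outpaces both the interior Dirichlet energy and the perturbation introduced by orthogonalization against $w_0(t_j)$. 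This is delicate because the domain $D_{\xi(t_j)}$ itself degenerates together with the coefficient $\rho$, and uniform control of $(\Psi(t_j),\xi(t_j))$ up to the crest—where regularity deteriorates along the sequence—must be arranged before the energy comparison can be made rigorous.
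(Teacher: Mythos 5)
The paper itself does not prove Proposition \ref{PrNov11}: it is quoted from \cite{Koz1b}, so there is no in-text argument to compare against. Your skeleton is nonetheless the intended one and fits the machinery the paper sets up. Part (a) is exactly item (iii) of the proposition on the spectrum of ${\mathcal A}(t)$; part (c) is sound once one notes that $t_*:=\inf\{t:\mu_1(t)<0\}$ gives $\mu_1\geq 0$ on $(0,t_*)$, that Theorem \ref{T1a} then makes the zero eigenvalue at $t_*$ simple (hence $\mu_1$ analytic there), and that the existence of $t_n\downarrow t_*$ with $\mu_1(t_n)<0$ together with isolatedness of zeros forces $\mu_1<0$ on a right neighbourhood; your appeal to the identity principle is misstated but the conclusion stands, and oddness of $m$ with $\mu_m<0$ in (\ref{F18a}) follows from the sign change.

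The genuine gap is step (b), which you rightly call the crux but do not prove. Two concrete points. First, the approach to the extreme wave along $\{t_j\}$ is only a weak limit (cf.\ Remark \ref{RNov22a}); to conclude that $\rho$ in (\ref{Sept17aa}) blows up near the crest uniformly in $j$ you need the $120^\circ$ corner asymptotics of $\Psi(t_j)$ up to second derivatives with constants uniform along the sequence, and you must also check that the numerator $1+\Psi_X\Psi_{XY}+\Psi_Y\Psi_{YY}$ stays bounded away from zero there --- not obvious, since the second derivatives are precisely the quantities that degenerate. This is the hard content of \cite{KL2}/\cite{Koz1b} and cannot be waved through. Second, the ``standard Gram--Schmidt adjustment'' is not free: with $\tilde\varphi_j=\varphi_j-\|w_0\|^{-2}\langle\varphi_j,w_0\rangle w_0$ one gets $Q_{t_j}(\tilde\varphi_j)=Q_{t_j}(\varphi_j)+|\mu_0(t_j)|\,\|w_0\|^{-2}\langle\varphi_j,w_0\rangle^2$, so the correction has the wrong sign; you must control both the overlap $\langle\varphi_j,w_0\rangle$ and $|\mu_0(t_j)|$ (which could a priori diverge as $\rho$ blows up) before negativity on the two-dimensional span follows. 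In short: a correct road map that reproduces the intended mechanism, with the decisive estimates missing.
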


The aim of this paper is a description of the first bifurcation on the curve (\ref{J3b})  in more details. The corresponding bifurcation problem can be written as
\begin{eqnarray}\label{K2az}
&&\Delta \widehat{\Psi}+\omega(\widehat{\Psi})=0\;\;\mbox{in $D_{\widehat{\xi}}$},\nonumber\\
&&\frac{1}{2}|\nabla\widehat{\Psi}|^2+\widehat{\xi}=R(t)\;\;\mbox{on $S_{\widehat{\xi}}$},\nonumber\\
&&\widehat{\Psi}=1\;\;\mbox{on $S_{\widehat{\xi}}$},\nonumber\\
&&\widehat{\Psi}=0\;\;\mbox{for $Y=0$},
\end{eqnarray}
where $\widehat{\Psi}=\Psi(t)+\tilde{w}$ and $\widehat{\xi}=\xi(t)+\tilde{\xi}$.
Now we are looking for branches of solutions to this problem passing through the solution $(\Psi(t_*),\xi(t_*),R(t_*))$ and different from (\ref{J3b}). Observe that in this problem we have a new parameter $t$ instead of $R$ as before.

The following theorem contains the description of the first bifurcation of the problem (\ref{K2az}).

\begin{theorem}\label{ThF18} Let condition {\rm (\ref{Okt30a})} and hence condition  {\rm (\ref{F18a})} hold. There exist at least one and at most $m$ continuous curves
$$
(\widehat{\Psi}_s,\widehat{\xi}_s,t(s))\in C^{2,\alpha}(D_{\widehat{\xi}})\times C^{2,\alpha}(\Bbb R)\times (0,\infty),\;\;\;s\in [0,\infty),
$$
bifurcating from $(\Psi(t_*),\xi(t_*),t_*)$. These curves after partial hodograph transformation (see Sect. \ref{SJ29a})  admit an analytic re-parameterisation near each point $s\geq 0$. Furthermore,  $t(0)=t_*$, $(\widehat{\Psi}_0, \widehat{\xi}_0)=(\Psi(t_*),\xi(t_*))$. Moreover each curve can be one of the following:

{\rm (i)} $t(s)=t_*$ for all $s\geq 0$;

{\rm (ii)} $\mu_1((t(s))=0$ for all $s\geq 0$;

{\rm (iii)} $t'(s)\neq 0$   and the Frechet derivative is invertible  for small positive $t-t_*$;

Furthermore the bifurcation curve satisfies on of the following properties: 

 {\rm (a)} $\sup_{s>0,(X,Y)\in D_{\widehat{\xi}_s}}|\widehat{\xi}_s'(X)|=\infty$ (overhanging);

{\rm (b)} $\inf_{s>0,(X,Y)\in D_{\widehat{\xi}_s}}\partial_Y\widehat{\Psi}_s(X,Y)=0$ (stagnation);

{\rm (c)} $t(s)\to\infty$\;\;\mbox{as $s\to\infty$};

{\rm (d)} the branch is a periodic curve.

\end{theorem}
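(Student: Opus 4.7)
The plan is to cast \eqref{K2az} as an analytic operator equation $\mathcal{F}(\Psi,\xi,t)=0$ on a suitable pair of Banach spaces, after applying a partial hodograph transformation to fix the domain (so that $\widehat{\xi}$ becomes part of the unknown in the trivialised strip, exactly as in \cite{W2}). By Theorem~\ref{T1a} together with the hypothesis \eqref{Okt30a}, at $(\Psi(t_*),\xi(t_*),t_*)$ the Fr\'echet derivative $\mathcal{F}_{(\Psi,\xi)}$ is Fredholm of index zero with a one-dimensional kernel spanned by (the hodograph image of) the eigenfunction of $\mathcal{A}(t_*)$ associated with $\mu_1(t_*)=0$. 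This sets the stage for a Lyapunov--Schmidt reduction.

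Next I would carry out the Lyapunov--Schmidt reduction, projecting onto the one-dimensional kernel and cokernel. Writing the bifurcating solution as $(\widehat{\Psi},\widehat{\xi})=(\Psi(t_*),\xi(t_*))+s\,\varphi_* + \mathrm{higher\ order}$, the reduction produces a single scalar analytic equation $\Phi(s,t)=0$ with $\Phi(0,t_*)=0$ and $\Phi(0,t)=c\,\mu_1(t)+O(\mu_1(t)^2)$ for a nonzero constant $c$ (because the crossing number is $1$ and the trivial branch corresponds to $s=0$). Factoring out $s$ whenever possible, the solution set of $\Phi=0$ near the origin splits into the trivial curve $s=0$ and the zero set of an analytic function $G(s,t)$ with $G(0,t)=c\mu_1(t)/s + \ldots$; invoking \eqref{F18a} one sees that $G(0,t)$ vanishes to odd order $m$ in $(t-t_*)$. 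The Weierstrass preparation theorem then writes $G$ as a distinguished polynomial of degree $m$ in $(t-t_*)$ with analytic coefficients in $s$, from which Puiseux analysis yields at least one and at most $m$ real analytic branches $t=t(s)$ emanating from $t_*$; along each one either $t(s)\equiv t_*$ (case (i)), or the branch stays inside the set $\{\mu_1=0\}$ forcing alternative (ii), or $t'(0)\ne 0$ and the Fr\'echet derivative becomes invertible for small positive $t-t_*$, which is alternative (iii).

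To globalise each local branch I would invoke the analytic global bifurcation theorem of Buffoni--Toland (as used in \cite{W,W2}): each local analytic curve extends to a maximal continuous curve admitting local analytic re-parametrisation, along which one of four alternatives must hold: the curve is unbounded in the parameter direction, i.e.\ $t(s)\to\infty$ (alternative (c)); the curve closes up into a loop (alternative (d)); the curve loses compactness through either an overhanging profile so that $|\widehat{\xi}_s'|$ blows up (alternative (a)); or through vanishing vertical velocity so that $\partial_Y\widehat\Psi_s\to 0$ (alternative (b)). Here the Schauder-type a~priori estimates away from stagnation and overhanging, combined with the uniform asymptotics at $X=\pm\infty$ coming from the fixed limit laminar flow, are what restrict loss of compactness to (a) or (b).

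The main obstacle will be the Lyapunov--Schmidt step at higher crossing order $m>1$: one must verify that the reduced equation genuinely factors so that the nontrivial zero set of $\Phi$ has multiplicity exactly $m$ (giving the sharp upper bound) and that each Puiseux branch is real and admits a real analytic reparametrisation; this requires a careful expansion of $\Phi$ identifying the coefficient of $(t-t_*)^m$ with the leading coefficient $\mu_m$ in \eqref{F18a} via the standard formula relating the transversality integrals to the derivatives of the perturbed eigenvalue $\mu_1(t)$. A secondary difficulty is ruling out nontrivial overhanging or stagnation in the bifurcation argument along cases~(i) and~(ii), where the curve lives over a single value or over a level set of $\mu_1$; here one exploits analyticity of the branch and the fact that the original branch \eqref{J3b} is characterised uniquely by its initial analytic germ at $t=0$ to conclude that the alternative branches really are distinct from \eqref{J3b}.
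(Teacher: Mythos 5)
Your overall architecture matches the paper's: partial hodograph transform, Lyapunov--Schmidt reduction to a scalar analytic bifurcation equation whose trivial factor is $\mu_1(t)$ with odd vanishing order $m$, Kielh\"ofer-type counting of between $1$ and $m$ real Puiseux branches, and then analytic global continuation with loss of compactness confined to overhanging or stagnation. For the generic branch --- case (iii), where $t'(s)\neq 0$ and the Fr\'echet derivative is invertible for small $s\neq 0$ --- your plan is essentially the paper's proof.

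There is, however, a genuine gap in your globalisation step for the degenerate branches (i) and (ii). You invoke the Buffoni--Toland analytic global bifurcation theorem uniformly for ``each local analytic curve,'' but that theorem (the paper's Theorem \ref{ThJ3a}) requires hypothesis (C): invertibility of $D_x\mathcal{F}$ along the local seed curve, together with $\Lambda'(s)\neq 0$. Along a vertical branch $t(s)\equiv t_*$, and along a branch on which $\mu_1(t(s))\equiv 0$, the linearisation has a nontrivial kernel at \emph{every} point of the local curve, so the hypothesis fails identically and the theorem cannot be applied as stated. The paper handles these two cases by separate arguments: for the vertical branch it passes to the reduced equation on the complement of the kernel (equation (\ref{Dec29a})), where the restricted Fr\'echet derivative \emph{is} invertible, applies the global theorem there, and then checks that the suppressed one-dimensional equation (\ref{F1bx}) propagates along the extended curve by analyticity; for the $\mu_1\equiv 0$ branch it runs a pointwise continuation argument at each limit point of the curve, showing the locally reduced scalar function $\mathbf F(s,\tau)$ is not identically zero (otherwise a full neighbourhood would consist of solutions, contradicting the construction of the curve), and then re-applies the local degenerate bifurcation theorem to extend. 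Without one of these devices your maximal curves in cases (i) and (ii) are not constructed. A second, smaller omission: in ruling out that a non-degenerate global curve simply terminates, you need to exclude its return to the critical laminar flow $R\to R_c$; the paper does this via Proposition \ref{PM20a} (uniqueness near $R_c$ forces coincidence with the primary branch (\ref{J3b}), a contradiction), and your compactness discussion should include this endpoint.
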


This paper is mostly devoted to the proof of Theorem \ref{ThF18}, where the structure of global bifurcating branches
starting from the bifurcation point $(\Psi(t_*),\xi(t_*),R(t_*))$ is described. Our approach is based on an abstract study of bifurcations at a simple eigenvalue, which models nonlinear boundary value problems. We upgrade known results on the local and global structure of bifurcating solutions and obtain a description of global structure of the set of bifurcating solution at a simple eigenvalue.
We note that we can guarantee that the point $t_*$ gives the secondary bifurcation only when this is not a turning point, i.e. the function $R(t)$ is monotone in a neighborhood of the bifurcation point. Indeed, bifurcation in the frame of Theorem \ref{ThF18} means that if $(\Psi(t(s_1),\xi(t(s_1),t(s_1))=(\Psi(t_2),\xi(t_2),t_2)$ for small $s_1$ and $t_2-t_*$ then $s_1=0$ and hence $t(s_1)=t_*$. Secondary bifurcation, which relates to  the curve (\ref{J3b}), means that if $(\Psi(t(s_1),\xi(t(s_1),R(t(s_1)))=(\Psi(t_2,\xi(t_2),R(t_2))$ for small $s_1$ and $t_2-t_*$ then $s_1=0$ and hence $t(s_1)=t_*$. Due to local monotonicity near $t_*$  and analyticity of $R$ this function is strongly monotonic near $t_*$. Therefore $t(s_1)=t_2$ and the bifurcation property in the above theorem gives $s_1=0$, which delivers secondary bifurcation branches.

\begin{theorem}\label{Tsep27} Let the condition (\ref{Bov20a}) be fulfilled. Then there exist at least two different solitary waves with the same flow force constant.
Moreover the following properties are valid

(1) If $t_*$ is a turning point, i.e. the function $R(t)$ is not monotone in any neighborhood of the point $t_*$, then there exists $\varepsilon>0$ such that for every
\begin{eqnarray*}
&&R\in (R_*-\varepsilon,R_*),\;\;\mbox{if $R(t)$ is increasing-decreasing or}\\
&&R\in (R_*,R_*+\varepsilon),\;\;\mbox{if $R(t)$ is decreasing-increasing},\;\;R_*=R(t_*),
\end{eqnarray*}
 the equation
$R(t)=R$ has two roots $t_1(R)$, $t_2(R)$ continuously depending on $R$ and $t_k(R_*)=t_*$ and the vector-functions $(\Psi(t_k(R),\xi(t_k(R))$, $k=1,2$, solves the problem (\ref{K2a})
and they are different for each $R$ in the interval.

(2) The function $R(t)$ is strongly monotone in a neighborhood of $t_*$. Then

(i) If $t(s)=t_*$ then $(\widehat{\Psi}_s,\widehat{\xi}_s,R)$ delivers a curve of solutions to (\ref{K2az}), which are different from the solutions (\ref{J3b}) in a neighborhood of $s=0$ exept the point $s=0$ where they coinside.

(ii) If the function $t(s)$ is not a constant then it is strongly monotone for $s\in (0,\epsilon)$ for small $\epsilon$. Let it be strongly increasing. Then the equation $t(s)=t$ has a unique root for small positive $t-t_*$, we denote it by $s(t)$. Then for the Bernoulli constant $R(t)$ we have two solutions
$$
(\Psi(t),\xi(t))\;\;\mbox{and}\;\; (\widehat{\Psi}_{s(t)},\widehat{\xi}_{s(t)}).
$$
If $t(s)$ be strongly decreasing then the same is true for $s\in(-\epsilon,0)$.
\end{theorem}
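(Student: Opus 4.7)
The plan is to exploit two observations: first, by the Lokharu result quoted in the introduction, any solitary wave with Bernoulli constant $R$ automatically has flow force $S_-(R)$, so it suffices to exhibit two distinct solitary waves with the same $R$; second, Theorem \ref{ThF18} already provides candidate secondary branches, and what remains is to translate its statements from the intrinsic parameter $t$ of Theorem \ref{T1} into the Bernoulli constant $R$. Hypothesis (\ref{Bov20a}) triggers Proposition \ref{PrNov11}, so (\ref{Okt30a}) and hence (\ref{F18a}) hold, and Theorem \ref{ThF18} supplies a bifurcating curve $(\widehat{\Psi}_s,\widehat{\xi}_s,t(s))$ meeting the primary curve (\ref{J3b}) only at $s=0$.

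For case (1), $R(t)$ has a turning point at $t_*$. Using the local real-analyticity of the branch (Remark (i) after Theorem \ref{T1}), $R(t)-R_*$ has leading form $a(t-t_*)^{k}+O((t-t_*)^{k+1})$ with $a\neq 0$, and the hypothesis that $R$ fails to be monotone on any neighbourhood forces $k$ even. For each $R$ on the side of $R_*$ dictated by the sign of $a$, the equation $R(t)=R$ admits precisely two continuous roots $t_1(R)<t_*<t_2(R)$ converging to $t_*$. The pairs $(\Psi(t_k(R)),\xi(t_k(R)))$ then solve (\ref{K2a}) with identical Bernoulli constant; they are distinct because the parametrisation $t\mapsto(\Psi(t),\xi(t))$ is locally injective, for if it were constant on a neighbourhood of $t_*$ then $R(t)$ would be constant there, contradicting the turning-point hypothesis.

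For case (2), $R$ is strongly monotone near $t_*$. In sub-case (i) the bifurcating curve has $t(s)\equiv t_*$, so each $(\widehat{\Psi}_s,\widehat{\xi}_s)$ with $s\ne 0$ solves (\ref{K2a}) with Bernoulli constant $R_*$ and, by the very definition of bifurcation, differs from $(\Psi(t_*),\xi(t_*))$. In sub-case (ii) $t(s)$ is analytic and non-constant, hence strongly monotone on $(0,\epsilon)$; say it is strongly increasing. Its inverse $s(t)$ is well-defined and continuous for small $t-t_*>0$, and $(\widehat{\Psi}_{s(t)},\widehat{\xi}_{s(t)})$ solves (\ref{K2a}) with Bernoulli constant $R(t(s(t)))=R(t)$, the same as the primary solution $(\Psi(t),\xi(t))$. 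Their distinctness is precisely the secondary bifurcation property isolated in the paragraph preceding the theorem: coincidence would force $s(t)=0$, hence $t=t_*$.

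The main obstacle is the transfer of bifurcation information from the $t$-parameter into the physically relevant $R$-parameter. In case (1) this is a Puiseux-type argument at a critical point of an analytic function; in case (2)(ii) it relies on strict monotonicity of $R(t)$ making $t\mapsto R(t)$ a local homeomorphism, allowing the secondary bifurcation property (phrased in $t$-coordinates) to be read off in $R$-coordinates. Case (2)(i) is essentially free, since both solutions sit on the single fibre $\{R=R_*\}$. Since one of (1), (2)(i), (2)(ii) always applies, the opening assertion that two distinct solitary waves share the same flow force constant follows from the case analysis.
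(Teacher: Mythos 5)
Your proposal is correct and takes essentially the same route as the paper's own proof: a case split between a turning point of $R(t)$ (two roots $t_1(R),t_2(R)$ of $R(t)=R$ on one side of $R_*$) and strong monotonicity of $R(t)$ (where the bifurcating curve of Theorem \ref{ThF18} is either vertical, $t(s)\equiv t_*$, or locally invertible in $s$, giving the pair $(\Psi(t),\xi(t))$ and $(\widehat{\Psi}_{s(t)},\widehat{\xi}_{s(t)})$), with distinctness drawn from the secondary-bifurcation discussion preceding the theorem. Your justification of distinctness in case (1) via ``non-constancy of $R$ implies local injectivity of $t\mapsto(\Psi(t),\xi(t))$'' is logically incomplete (a non-constant analytic curve need not be injective), but the paper itself asserts this distinctness without further argument, so this does not separate your proof from the paper's.
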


One can verify that Theorem \ref{T12-2} follows from the above result.

The first study of bifurcations on the branches of solitary water waves was done by Plotnikov in \cite{P3} in the irrotational case. In the case of vortical flows one cannot apply the same technique as in the irrotational case since it is based on the complex analysis approach. We note that even in the ir-rotational case we obtain the existence of infinitely many pairs of solitary solutions with the same Bernoulli constant in any neighborhood of the first bifurcation point, which is  also a new result.

 The important contributions allowing to approach the  bifurcation problems are papers  \cite{W}, \cite{W2} and \cite{Koz1b}, where the global branches of solitary waves were constructed, and the papers \cite{KL2} together with \cite{Koz1} and \cite{Koz1a}, where the bifurcation analysis for branches of Stokes waves were presented.

\subsection{Uniform stream solutions and the dispersion equation}\label{SJ6a}

The uniform stream solution $\Psi=U(Y)$ with the constant depth $\xi =d$  satisfies the problem
\begin{eqnarray}\label{X1}
&&U^{''}+\omega(U)=0\;\;\mbox{on $(0,d)$},\nonumber\\
&&U(0)=0,\;\;U(d)=1,\nonumber\\
&&\frac{1}{2}U'(d)^2+d=R.
\end{eqnarray}
To find solutions to this problem we introduce a parameter $\theta=U'(0)$. We assume that
 $\theta>\theta_0:=\sqrt{2\max_{\tau\in [0,1]}\Omega(\tau)}$, where
\begin{equation}\label{M19a}
\Omega(\tau)=\int_0^\tau \omega(p)dp.
\end{equation}
Then the problem (\ref{X1}) has a solution $(U,d)$ with a strongly monotone function $U$ for
\begin{equation}\label{M6ax}
R={\mathcal R}(\theta):=\frac{1}{2}\theta^2+d(\theta)-\Omega(1).
\end{equation}
This solution is given by
\begin{equation}\label{F22a}
Y=\int_0^U\frac{d\tau}{\sqrt{\theta^2-2\Omega(\tau)}},\;\;d=d(\theta)=\int_0^1\frac{d\tau}{\sqrt{\theta^2-2\Omega(\tau)}}.
\end{equation}
The function $d$ is strongly decreasing. 

If we consider (\ref{M6ax}) as the equation with respect to $\theta$ then it is solvable if $R\geq R_c$, where
\begin{equation}\label{F27a}
R_c=\min_{\theta\geq \theta_0}{\mathcal R}(\theta),
\end{equation}
and it has two solutions if
$R\in (R_c,R_0)$, where
\begin{equation}\label{D19ba}
R_0={\mathcal R}(\theta_0).
\end{equation}
We denote by $\theta_c$ the point where the minimum in (\ref{F27a}) is attained. Clearly, the supercritical laminar flows (flows with $\theta>\theta_c$) can be uniquely parameterized by the Bernoulli constant $R>R_c$.

The following proposition describes a connection between the Bernoulli constant $R$, the Froude number $F$ and the supercritical flow force $S_-$.
\begin{proposition}\label{Pr26b}
(i) The Bernoulli constant $R\in [R_c,\infty)$ can be considered as a continuous function of the Froude number $F\in [1,\infty)$. This function is strongly increasing, analytical on $(1,\infty)$, $R(1)=R_c$ and $R(F)\to\infty$ as $F\to\infty$.

(ii) $S_-(R)$ is a strongly monotonic function for $R\geq R_c$.


(iii) There exists a constant $\widehat{R}$ such that $R\leq \widehat{R}$ for all $R$ corresponding to  solitary waves.
\end{proposition}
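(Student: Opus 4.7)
The plan is to parametrize everything by $\theta=U'(0)$ and reduce both claims to two elementary identities for the derivatives of $\mathcal{R}$ and $F$ with respect to $\theta$. Integrating the ODE in \eqref{X1} gives $U'(Y)^2=\theta^2-2\Omega(U)$, so via the change of variable $U\mapsto Y$ the Froude integral \eqref{Au22b} becomes
\begin{equation*}
F(\theta)^{-1/2}=\int_0^{d(\theta)}\frac{dY}{U'(Y)^2}=\int_0^1\frac{d\tau}{(\theta^2-2\Omega(\tau))^{3/2}}.
\end{equation*}
Differentiation under the integral sign yields two key formulas. First, from the representation $d(\theta)=\int_0^1(\theta^2-2\Omega(\tau))^{-1/2}d\tau$ we get $d'(\theta)=-\theta F^{-1/2}$, hence
\begin{equation*}
\mathcal{R}'(\theta)=\theta+d'(\theta)=\theta\bigl(1-F^{-1/2}\bigr).
\end{equation*}
Second,
\begin{equation*}
\frac{d}{d\theta}F(\theta)^{-1/2}=-3\theta\int_0^1(\theta^2-2\Omega(\tau))^{-5/2}d\tau<0,
\end{equation*}
so $F(\theta)$ is strictly increasing on $(\theta_0,\infty)$, and the defining condition $\mathcal{R}'(\theta_c)=0$ forces $F(\theta_c)=1$.

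For part (i), the two displayed formulas show that on $(\theta_c,\infty)$ both $F(\theta)$ and $\mathcal{R}(\theta)$ are real-analytic and strictly increasing, with $F(\theta_c)=1$, $\mathcal{R}(\theta_c)=R_c$, and both tending to $\infty$ as $\theta\to\infty$. The inverse function theorem (applied to $F$ on $(\theta_c,\infty)$) provides an analytic inverse $\theta=\theta(F)$ on $(1,\infty)$, and composing gives $R(F)=\mathcal{R}(\theta(F))$ strictly increasing, analytic on $(1,\infty)$, continuous up to $F=1$ with $R(1)=R_c$, and surjective onto $[R_c,\infty)$.

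For part (ii), I would first derive an explicit $\theta$-representation of $S_-$. Starting from \eqref{Sep29aa} with $U_Y^2=\theta^2-2\Omega(U)$, the substitution $Y\mapsto U$ together with the Bernoulli identity $R=\tfrac12\theta^2+d(\theta)-\Omega(1)$ collapses the integrand, yielding the clean expression
\begin{equation*}
S_-(\theta)=\int_0^1\sqrt{\theta^2-2\Omega(\tau)}\,d\tau+\frac{d(\theta)^2}{2}.
\end{equation*}
Differentiating and using $d'(\theta)=-\theta F^{-1/2}$ gives $S_-'(\theta)=\theta d(\theta)(1-F^{-1/2})$. Dividing by $\mathcal{R}'(\theta)=\theta(1-F^{-1/2})$ produces the strikingly simple identity
\begin{equation*}
\frac{dS_-}{dR}=d(\theta)>0 \qquad (\theta>\theta_c),
\end{equation*}
and the value at $R_c$ is recovered by continuity. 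This establishes strong monotonicity of $S_-(R)$ on $[R_c,\infty)$.

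The main obstacle is purely computational: the change-of-variables and cancellations leading from \eqref{Sep29aa} to the compact formula for $S_-(\theta)$ must be executed carefully, since an error there would obscure the otherwise transparent identity $dS_-/dR=d(\theta)$. Once that identity is in hand, both parts follow without further work, and no qualitative analysis beyond the explicit formulas is needed.
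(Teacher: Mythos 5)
Your proposal is correct, and for part (ii) it lands on exactly the same key identity as the paper, namely $\partial_\theta \mathcal{S}=d(\theta)\,\mathcal{R}'(\theta)$, i.e.\ $dS_-/dR=d(\theta)>0$. The route is slightly different in execution: the paper differentiates \eqref{Sep29aa} under the integral sign, integrates $\int U_YU_{Y\theta}\,dY$ by parts against $U''=-\omega(U)$, and uses $\partial_\theta U(d;\theta)=0$ together with the Bernoulli condition to kill the boundary terms; you instead first collapse the integrand using $U_Y^2=\theta^2-2\Omega(U)$ and $R=\tfrac12\theta^2+d-\Omega(1)$ to the closed form $S_-(\theta)=\int_0^1\sqrt{\theta^2-2\Omega(\tau)}\,d\tau+\tfrac12 d(\theta)^2$ (which checks out: the integrand reduces to $\theta^2-2\Omega(U)+(d-Y)=U_Y^2+(d-Y)$) and then differentiate. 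Your version is a bit more transparent and additionally yields the explicit formula for $S_-$ and the relation $\mathcal{R}'(\theta)=\theta(1-F^{-1/2})$, which cleanly ties criticality $F=1$ to $\mathcal{R}'=0$. For part (i) the paper simply cites \cite{Koz1b}, whereas you supply a self-contained argument via monotonicity of $F^{-1/2}=\int_0^1(\theta^2-2\Omega(\tau))^{-3/2}d\tau$ in $\theta$ and the inverse function theorem; this is consistent with the paper's definition \eqref{Au22b} and is a welcome addition. The only minor caveat, shared with the paper, is the tacit assumption that the minimum of $\mathcal{R}$ is attained at an interior point $\theta_c>\theta_0$ (so that $\mathcal{R}'(\theta_c)=0$ and $F(\theta_c)=1$); and at $R=R_c$ the quotient $S_-'/\mathcal{R}'$ is formally $0/0$, which you correctly resolve by continuity of the ratio $d(\theta)$.
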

\begin{proof} The assertion (i) is proved in \cite{Koz1b}.

(ii) The uniform stream solution $(U(y;\theta),d(\theta))$, describing the asymptotics (\ref{Ju31a}) of the solitary waves at $\pm\infty$, is given by (\ref{F22a}) with $\theta>\theta_c$. Let us show that the supercritical flow force is increasing function of $R$.

The flow force for $(U(y;\theta),d(\theta))$ is defined by
$$
{\mathcal S}(\theta)=\int_0^d\Big(\frac{1}{2}U'^2-\Omega(U)+\Omega(1)-Y+{\mathcal R}(\theta)\Big)dy.
$$
Then
\begin{eqnarray*}
&&\partial_\theta {\mathcal S}(\theta)=\Big(\frac{1}{2}U_Y^2(d;\theta)-d+R\Big)d'+\int_0^d\Big(U_YU_{Y\theta}-\omega(U)U_\theta+{\mathcal R}'(\theta)\Big)dY\\
&&=\Big(\frac{1}{2}U_Y^2(d;\theta)-d+R\Big)d'+U_Y(d;\theta)U_\theta(d;\theta)+{\mathcal R}'(\theta)d.
\end{eqnarray*}
Using that
$$
\partial_\theta U(d;\theta)=U_Y(d;\theta)d'+U_\theta(d;\theta)=0,
$$
we obtain
$$
\partial_\theta {\mathcal S}(\theta)=\Big(-\frac{1}{2}U_Y^2(d;\theta)-d+R\Big)d'+{\mathcal R}'(\theta)d={\mathcal R}'(\theta)d.
$$
Therefore ${\mathcal S}$ is increasing for $\theta>\theta_c$. Since the function ${\mathcal R}(\theta)$ is also increasing for $\theta>\theta_c$ and tends to $\infty$ when $\theta\to\infty$, the function ${\mathcal S}$ can be considered as an increasing function of $R$.

(iii) This is proved in \cite{Koz1b}, Proposition 2.4.
\end{proof}

In the next proposition we prove the positivity of the lowest eigenvalue of the problem (\ref{Okt25az}).

\begin{proposition}\label{PrAp8a} Let $(U,d)$ be a supercritical (i.e. $F>1$) laminar flow corresponding to the Bernoulli constant $R>R_c$. Consider the eigenvalue problem
\begin{eqnarray}\label{Ap8a}
&& -v^{''}-\omega'(U)v=\nu v\;\;\mbox{on $(0,d)$},\nonumber\\
&&v(0)=0\;\;\mbox{and}\;\;v'(d)-\rho_0v(d)=0,
\end{eqnarray}
where $\rho_0$ is given by (\ref{Okt25a}).
Then the smallest eigenvalue $\nu_0$ of this eigenvalue problem is positive.

\end{proposition}
\begin{proof}
Consider first the spectral problem
\begin{eqnarray}\label{Ap7a}
&& -w^{''}-\omega'(U)w=0\;\;\mbox{on $(0,d)$},\nonumber\\
&&w(0)=0\;\;\mbox{and}\;\;w'(d)-\rho_0w(d)=\sigma w.
\end{eqnarray}
 Its first eigenfunction is $\gamma(y)$, which solves the first equation in (\ref{Ap7a}), and $\gamma(0)=0$, $\gamma(d)=1$. Then the corresponding eigenvalue is $\sigma=\gamma'(d)-\rho_0 \gamma(d)$. According to \cite{Koz1b}, Sect. 2.1
\begin{equation}\label{Ap8aa}
\sigma=-\frac{3}{2U'(d)}\frac{{\mathcal R}'(\theta)}{d'(\theta)}=\frac{3(F^2(\theta)-1)}{2U'(d)},
´\end{equation}
where ${\mathcal R}(\theta)=R$ and $\theta>\theta_c$.

Multiplying (\ref{Ap7a}) by the solution $v$ to the problem (\ref{Okt25az}), we get
\begin{eqnarray*}
&&0=\int_0^d(-w^{''}-\omega'(U)w)vdy=\int_0^d(-v^{''}-\omega'(U)v)w dy-(w'v)(d)+(wv')(d)\\
&&=\nu_0\int_0^dvwdy-\mu(wv)(d).
\end{eqnarray*}
Therefore
$$
\nu_0\int_0^dv\gamma(y;0)  dy=\mu v(d).
$$
Since both functions $\gamma$ and $v$ do not vanish in $(0,d]$ this implies $\nu_0>0$.
\end{proof}

\subsection{Partial hodograph transform}\label{SJ29a}

In what follows we will study  branches of solitary waves $(\Psi(X,Y;t),\xi(X;t), R(t))$  started from the uniform stream solution at $t=0$. The existence of such branches is established in \cite{W} (see Theorem \ref{T1} here).

We assume that
$$
\Psi_Y>0\;\;\mbox{in $\overline{D_\xi}$}
$$
and use the variables
$$
q=X,\;\;p=\Psi.
$$
Then
$$
q_X=1,\;\;q_Y=0,\;\;p_X=\Psi_X,\;\;p_Y=\Psi_Y,
$$
and
\begin{equation}\label{F28b}
\Psi_X=-\frac{h_q}{h_p},\;\;\Psi_Y=\frac{1}{h_p}.
\end{equation}

System (\ref{K2a}) in the new variables takes the form
\begin{eqnarray}\label{J4a}
&&\Big(\frac{1+h_q^2}{2h_p^2}+\Omega(p)\Big)_p-\Big(\frac{h_q}{h_p}\Big)_q=0\;\;\mbox{in $Q$},\nonumber\\
&&\frac{1+h_q^2}{2h_p^2}+h=R\;\;\mbox{for $p=1$},\nonumber\\
&&h=0\;\;\mbox{for $p=0$}.
\end{eqnarray}
Here
$$
Q=\{(q,p)\,:\,q\in\Bbb R\,,\;\;p\in (0,1)\}.
$$
The uniform stream solution corresponding to the solution $U$ of (\ref{X1}) is
\begin{equation}\label{M4c}
H(p)=\int_0^p\frac{d\tau}{\sqrt{\theta^2-2\Omega(\tau)}},\;\;\theta=U'(0)=H_p^{-1}(0).
\end{equation}
One can check that
\begin{equation}\label{J18aaa}
H_{pp}-H_p^3\omega(p)=0
\end{equation}
or equivalently
\begin{equation}\label{J18aa}
\Big(\frac{1}{2H_p^2}\Big)_p+\omega(p)=0.
\end{equation}
Moreover, it satisfies the boundary conditions
\begin{equation}\label{M4ca}
\frac{1}{2H_p^2(1)}+H(1)=R,\;\;H(0)=0.
\end{equation}
The Froude number in new variables can be written as
$$
\frac{1}{F^2}=\int_0^1H_p^3dp.
$$

Furthermore, the Frechet derivative (the linear approximation of the functions ${\mathcal F}$ and ${\mathcal G}$ at a solution $h$) is the following
\begin{equation*}
Aw=A(h)w=\Big(\frac{h_qw_q}{h_p^2}-\frac{(1+h_q^2)w_p}{h_p^3}\Big)_p-\Big(\frac{w_q}{h_p}-\frac{h_qw_p}{h_p^2}\Big)_q
\end{equation*}
and
\begin{equation}\label{J4aba}
{\mathcal N}w={\mathcal N}(h)w=(N w-w)|_{p=1},
\end{equation}
where
\begin{equation*}
N w=N(h)w=\Big(-\frac{h_qw_q}{h_p^2}+\frac{(1+h_q^2)w_p}{h_p^3}\Big)\Big|_{p=1}.
\end{equation*}
The eigenvalue problem for the Frechet derivative, which is important for the analysis of bifurcations of the problem
(\ref{F19a}), is the following
\begin{eqnarray}\label{M1a}
&&A(h)w=\mu w\;\;\mbox{in $Q$},\nonumber\\
&&{\mathcal N}(h)w=0\;\;\mbox{for $p=1$},\nonumber\\
&&w=0\;\;\mbox{for $p=0$}.
\end{eqnarray}
Then according to  \cite{W} there exists a continuous branch of even solitary wave solutions to (\ref{J4a})
\begin{equation}\label{J4ac}
h=h(q,p;t):[0,\infty)\rightarrow C^{2,\gamma}_{0,e}(\overline{Q}),\;\;R=R(t):[0,\infty)\rightarrow (R_c,\infty),
\end{equation}
which  has a real analytic reparametrization locally around each $t> 0$.

An equivalent formulation of Theorem \ref{ThF18} in the variables $q$ and $p$ is given in the following

\begin{theorem}\label{TrF18b} Let condition (\ref{Okt30a}) hold. There exist at least one and at most $m$ continuous curves
$$
(\widehat{h}(s),t(s))\in C^{2,\alpha}_{0,e}(Q)\times (0,\infty),\;\;\;s\in [0,\infty),
$$
bifurcating from $(h(t_*),R(t_*)$. They admit an analytic re-parameterisation near each point, $t(0)=t_*$ and $(\widehat{h}(0),t(0))=(h(t_*),t_*)$. Moreover each curve is one of the following:

(i) $t(s)=t_*$ for all $s\geq 0$;

(ii) $\mu_1((t(s))=0$ for all $s\geq 0$;

(iii) $t'(s)\neq 0$   and the Frechet derivative is invertible  for $s>0$ except a set of discrete points with the only possible accumulation point at $\infty$.

Furthermore the bifurcation curve satisfies one of the properties (a)-(d) in Theorem \ref{ThF18}. %

\end{theorem}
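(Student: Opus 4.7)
The plan is to establish Theorem \ref{TrF18b} in three stages: local bifurcation at $t_*$, classification of local branches, and global analytic continuation.

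\textbf{Local bifurcation at $t_*$.} First I would perform a Lyapunov--Schmidt reduction of (\ref{J4a}) at $(h(t_*),t_*)$. By Theorem \ref{T1a} combined with hypothesis (\ref{Okt30a}), the linearization $A(h(t_*))$ with boundary operator $\mathcal{N}(h(t_*))$ has a simple zero eigenvalue; let $\varphi_*\in C^{2,\alpha}_{0,e}(Q)$ span its kernel. Decomposing $\widehat{h}=h(t_*)+s\varphi_*+\psi$ with $\psi$ in a complement of the kernel, the analytic implicit function theorem (using the analyticity of the hodograph formulation around each solution, cf.\ \cite{W2}) eliminates $\psi$ and reduces the system to a single analytic scalar equation $F(s,\tau)=0$, $\tau=t-t_*$. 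The primary curve (\ref{J4ac}) contributes the trivial root $s=0$, so $F$ factors as $F(s,\tau)=s\,G(s,\tau)$ with $G$ analytic. Differentiating along the primary branch identifies $G(0,\tau)$ with (a nonzero multiple of) the eigenvalue $\mu_1(t)$ from (\ref{F18a}): it vanishes to odd order $m$ at $\tau=0$ with $\mu_m<0$. Puiseux theory for the analytic plane variety $\{G=0\}$ then produces at least one and at most $m$ irreducible branches through $(0,0)$, each analytic after a fractional-power reparameterization.

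\textbf{Classification into (i)--(iii).} For each local branch I examine the analytic function $t(s)$. If $t\equiv t_*$ we are in case (i). Otherwise $t(s)-t_*$ has isolated zeros, so either $\mu_1(t(s))\equiv 0$, giving case (ii), or $\mu_1(t(s))$ also has only isolated zeros, whence $t'(s)\neq 0$ and the Frechet derivative is invertible off a discrete set, giving case (iii). Analyticity rules out any intermediate behaviour.

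\textbf{Global continuation and the alternative (a)--(d).} I would then extend each local curve to $s\in[0,\infty)$ via the analytic global bifurcation theorem in the Dancer--Buffoni--Toland paradigm, as adapted for the strip in \cite{Koz1b}. Along such a maximal extension, $A(h)$ with boundary operator $\mathcal{N}(h)$ is Fredholm of index zero, and Schauder estimates together with uniform decay to the laminar asymptotics (\ref{Ju31a}) provide enough compactness to continue the branch analytically, so long as four things remain under control: $\|h_q\|_{L^\infty}$ bounded, $\inf h_p$ bounded away from $0$, $t(s)$ bounded, and the curve not closing up on itself. Loss of any one of these obstructions is precisely one of the listed alternatives: $\|h_q\|_\infty\to\infty$ translates through (\ref{F28b}) into overhanging (a); $\inf h_p\to 0$ translates via (\ref{F28b}) into stagnation (b); $t(s)\to\infty$ is option (c); a nontrivial loop gives the periodic curve (d).

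\textbf{Main obstacle.} The crux is Stage~3: the standard analytic global bifurcation machinery is typically phrased for bounded or periodic domains, and its extension to the unbounded strip $Q=\Bbb R\times(0,1)$ requires a careful Fredholm analysis of $A(h)$ with the free-boundary condition $\mathcal{N}(h)w=0$, together with uniform-in-$s$ control of the convergence to the limiting uniform stream at $|q|\to\infty$. A secondary delicate point is ensuring that the multiplicity bound $m$ coming from the scalar Puiseux picture survives the Lyapunov--Schmidt reduction, i.e.\ that distinct analytic branches of the scalar equation lift to distinct branches in $C^{2,\alpha}_{0,e}(Q)\times(0,\infty)$; this hinges on the nondegeneracy of $G$ away from the branch locus and on the genuine analyticity of the hodograph problem guaranteed by \cite{W2}.
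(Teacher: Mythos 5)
Your overall strategy---Lyapunov--Schmidt reduction at the degenerate simple eigenvalue, factoring out the trivial branch so that the reduced scalar equation has leading term $\mu_1$, a Puiseux analysis giving between $1$ and $m$ branches, and then global analytic continuation with a compactness/alternatives dichotomy---is the same as the paper's, which packages the local step as Theorem \ref{ThJ3} and Proposition \ref{PrDa} (following Kielh\"ofer) and the global step as Theorem \ref{ThJ3a}, with the compactness supplied by Proposition \ref{PrF12a}.

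The genuine gap is in your Stage 3. The analytic global bifurcation theorem you invoke requires (condition (C) of Sect.~\ref{SectJ3}) that the Fr\'echet derivative $D_x\mathcal{F}$ be \emph{invertible} along the initial local arc; this holds only in your case (iii). In the two degenerate alternatives of the local classification---$t(s)\equiv t_*$ (vertical bifurcation) and $\mu_1\equiv 0$ along the branch with $t$ nonconstant---the linearization has a nontrivial kernel at \emph{every} point of the local branch, so the continuation machinery does not apply as stated; indeed your list of ``four things to keep under control'' never includes invertibility of the linearization. The paper devotes cases 2 and 3 of its proof precisely to this: in the vertical case it passes to the complement of the kernel and applies the global theorem to the reduced problem (\ref{Dec29a}), carrying the one-dimensional equation (\ref{F1bx}) along the extended curve by analyticity; in the case $\mu_1\equiv 0$ it continues the curve by hand, performing a local Lyapunov--Schmidt reduction at each accumulation point of the maximal arc and showing the reduced bifurcation function there is not identically zero (otherwise a full neighbourhood would consist of solutions, contradicting the construction of the curve). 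Without an argument of this kind your proof covers only branches of type (iii). A secondary, smaller omission: when deriving the alternatives (a)--(d) you must also exclude the possibility that a bounded, non-overhanging, non-stagnating branch limits back onto the critical laminar flow; the paper uses Proposition \ref{PM20a} to force $R\geq R_c+\epsilon$ along the bifurcating curve, so that its closure lies in some $\widehat{\mathcal U}_\delta$ and compactness yields the periodicity alternative.
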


In this theorem $\widehat{h}=h(t)+w$, where $w=0$ for $t=t_*$.
Instead of $(h,R)$ we use the pair $(w,t)$ in (\ref{J4a}), where $h$ and $R$ are replaced by $\widehat{h}$ and $R(t)$.

\section{Some properties of solitary waves}

In this section we discuss such properties of sets of solitary waves as uniform boundedness, uniform decay at $\infty$ and compactness. Another approach of proving compactness of sets of solitary waves can be found in \cite{W}, \cite{W3} and \cite{W2}.

Denote
$$
Q_a=[a,a+1]\times [0,1]\;\;\mbox{for $a\in\Bbb R$}.
$$
For $k=1,2,\ldots$ and $\alpha\in (0,1)$ we denote $C^{k,\alpha}(Q_a)$ and $C^{k,\alpha}(a,a+1)$ the H\"{o}lder spaces in $Q_a$ and $[a,a+1]$ respectively. The norms in these spaces we denote $||\cdot||_{C^{k,\alpha}(Q_a)}$ and $||\cdot||_{C^{k,\alpha}((a,a+1))}$.

Let $C^{k,\alpha}_0(Q)$ the space of even functions define on $Q$, vanishing for $p=0$, with the finite norm
$$
||u||_{C^{k,\alpha}_0(Q)}:=\sup_{a\in\Bbb R}||\cdot||_{C^{k,\alpha}(Q_a)}.
$$
The space  $C^{k,\alpha}(\Bbb R)$ consists of even functions on $\Bbb R$ with the finite norm
$$
||u||_{C^{k,\alpha}(\Bbb R)}:=\sup_{a\in\Bbb R}||\cdot||_{C^{k,\alpha}((a,a+1))}.
$$

 We put
\begin{equation}\label{Ap5a}
{\mathcal U}_\delta=\{(h,R)\in C^{2,\alpha}_{0}(Q)\times (R_c-\delta,\infty),\,:\,\delta<h_p<\delta^{-1}, h_q<\delta^{-1}\}.
\end{equation}
where $\delta$ is a positive number.
Let also
\begin{equation}\label{Ja11azz}
{\mathcal S}=\{(h,R)\in C^{2,\alpha}_{0}(Q)\times (R_c,\infty) \,:\,\mbox{$(h,R)$--a solitary wave solution to (\ref{J4a})}\}.
\end{equation}

\subsection{Uniform boundedness of solitary waves in ${\mathcal U}_\delta$}

Introduce the constant
$$
\omega_1=\max_{\tau\in [0,1]}|\omega(\tau)|+\max_{\tau\in [0,1]}|\omega'(\tau)|.
$$
The proof of the following lemma can be found in \cite{KLN17},  Proposition 2.
\begin{lemma}\label{LM20a} Let
\begin{equation}\label{F27az}
R_c<R_1<\infty,\;\;\Psi_Y\geq 0\;\;\mbox{in $D_\xi$ and}\;\;|\xi'(X)|\leq M\;\;\mbox{for $X\in\Bbb R$}.
\end{equation}
Then there exists $C=C(R_1,M,\omega_1)$ (this constant does not depend
on other parameters) such that if $(\Psi,\xi)\in C^1(\overline{D_\xi})\times W^{1,\infty }(\Bbb R)$ solves the problem (\ref{K2a}) with $R\in (0,R_1]$
then
\begin{equation}\label{M16a}
|\nabla\Psi(X,Y)|\leq C.
\end{equation}
\end{lemma}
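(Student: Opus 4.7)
The plan is to combine the Bernoulli condition on the free surface, which directly controls $|\nabla\Psi|$ there, with a subharmonic auxiliary function built from $|\nabla\Psi|^2$, plus local elliptic boundary estimates at the flat bottom.

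From the Bernoulli condition $\tfrac{1}{2}|\nabla\Psi|^2+\xi=R$ on $S_\xi$ and non-negativity of the kinetic term I immediately obtain $\xi(X)\leq R\leq R_1$ and $|\nabla\Psi|^2\leq 2R_1$ on $S_\xi$, so both $\xi$ and the gradient along the free surface are controlled in terms of $R_1$ alone. Next I would introduce the Bernoulli-type quantity $v=\tfrac{1}{2}|\nabla\Psi|^2+\Omega(\Psi)+Y$. A direct computation using $\Delta\Psi=-\omega(\Psi)$ gives
\[
\Delta v = |D^2\Psi|^2-\omega(\Psi)^2 \geq -\omega_1^2,
\]
so that $\tilde v = v+\tfrac{\omega_1^2}{2}Y^2$ is subharmonic in $D_\xi$. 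On $S_\xi$ one has $v=R+\Omega(1)\leq R_1+\omega_1$, while on the flat bottom $v=\tfrac{1}{2}\Psi_Y(X,0)^2$; a global bound on $\tilde v$ via the maximum principle therefore reduces to a uniform bound on $\Psi_Y$ at $Y=0$.

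For the bottom bound I would apply boundary $W^{2,p}$/Schauder estimates to the semilinear equation $\Delta\Psi=-\omega(\Psi)$ on half-discs of some fixed radius $r_0$ around each $(X_0,0)$: since $0\leq\Psi\leq 1$ and $|\omega(\Psi)|\leq\omega_1$, these estimates yield $|\Psi_Y(X_0,0)|\leq C(\omega_1)$ as long as such a half-disc fits into $D_\xi$. The assumption $|\xi'|\leq M$ provides uniform Lipschitz control of the top boundary and, together with a suitable rescaling of the estimate when $\xi(X_0)$ is small, makes the constants in these local estimates independent of $X_0$. Feeding the resulting bound on $\Psi_Y|_{Y=0}$ back into the subharmonicity of $\tilde v$ and applying the maximum principle on bounded truncations $[-L,L]\times(0,\xi(\cdot))$, combined with a Phragm\'en--Lindel\"of-type localisation as $L\to\infty$, yields the desired uniform bound on $|\nabla\Psi|$.

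The main obstacle I foresee is passing from local to uniform bounds in the unbounded horizontal direction while simultaneously handling the possibility that $\xi$ could degenerate locally: if the layer thickness collapses then $\Psi$ must go from $0$ to $1$ across a thin strip and $\Psi_Y$ blows up at the bottom. The hypothesis $|\xi'|\leq M$ should be exactly what is needed to keep all local elliptic constants independent of $X_0$ (and to prevent such collapse from occurring in a way incompatible with the uniform bound on $\nabla\Psi|_{S_\xi}$), but turning this into a clean quantitative statement is the technical heart of the argument and is likely carried out in \cite{KLN17} by a careful localisation/contradiction scheme combined with the flux identity $\int_0^{\xi(X)}\Psi_Y\,dY=1$.
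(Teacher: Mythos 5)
The paper does not actually prove this lemma --- it is quoted from \cite{KLN17} (Proposition~2) --- so I am judging your proposal on its own merits. Your central device, the Bernoulli-type function $v=\tfrac12|\nabla\Psi|^2+\Omega(\Psi)+Y$ with $\Delta v=|D^2\Psi|^2-\omega(\Psi)^2\geq-\omega_1^2$ and the resulting subharmonic correction $\tilde v=v+\tfrac{\omega_1^2}{2}Y^2$, is exactly the right one, and your boundary values on $S_\xi$ ($v=R+\Omega(1)$, hence controlled by $R_1$ and $\omega_1$) are correct. The genuine gap is your treatment of the bottom. You propose to bound $\Psi_Y(X,0)$ by local Schauder/$W^{2,p}$ estimates on half-discs at the bottom and then feed this into the maximum principle; but those estimates degenerate like $\xi(X_0)^{-1}$ as the local depth shrinks, and at this stage you have no positive lower bound on $\xi$ (the flux identity $\int_0^{\xi}\Psi_Y\,dY=1$ only gives one \emph{after} the gradient bound is known, so that route is circular). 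You flag this yourself as "the technical heart" without resolving it, so as written the argument does not close.

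The fix is that no bottom gradient estimate is needed at all: the bottom can be excluded as a location of the supremum of $\tilde v$ by a direct first-derivative computation. On $Y=0$ one has $\Psi\equiv0$, hence $\Psi_X=\Psi_{XX}=0$ and $\Psi_{YY}=-\omega(0)$, so
\begin{equation*}
\partial_Y v\big|_{Y=0}=\Psi_Y\Psi_{YY}+\omega(\Psi)\Psi_Y+1=-\omega(0)\Psi_Y+\omega(0)\Psi_Y+1=1>0,
\end{equation*}
and $\partial_Y(\tfrac{\omega_1^2}{2}Y^2)=0$ there. Thus $\tilde v$ is strictly increasing in $Y$ at every bottom point and cannot attain its maximum on $\{Y=0\}$. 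Combined with subharmonicity in the interior, the bound on $S_\xi$, and the Phragm\'en--Lindel\"of argument you already invoke for $|X|\to\infty$ (legitimate here because $\tilde v$ is a priori bounded, since $\Psi\in C^1(\overline{D_\xi})$ in the uniform sense, and the strip has width at most $R_1$), this yields $\sup\tilde v\leq R_1+\omega_1+\tfrac{\omega_1^2}{2}R_1^2$ and hence the claimed bound on $|\nabla\Psi|$. With this replacement your argument becomes a complete and clean proof; note also that it makes the dependence of $C$ on $M$ superfluous, which is consistent with the lemma (which merely permits, but does not require, that dependence).
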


The next assertion follows from the above lemma and the local estimates  presented and proved in Sect. 2, Chapter 10, \cite{LU}. We use the notation $B_{\rho}(X,Y)$ for the ball of radius $\rho$ and with the center at $(X,Y)$.

\begin{lemma}\label{LM20b} Let the inequalities (\ref{F27az}) be valid and let $(\Psi,\xi)\in C^{2,\alpha}(\overline{D_\xi})\times C^{2,\alpha}(\Bbb R)$  be a solution to (\ref{K2a}) with $R_c<R_1\leq R$. If
\begin{equation}\label{F27b}
\Psi_Y\geq \delta>0 \;\mbox{in $B_\rho(X,Y)$}
\end{equation}
then there exists constants $C_1$ and $\alpha_1\in (0,1)$  depending also on $M,\omega,\delta,\rho$ and $R_1$ such that
$$
||\Psi||_{C^{3,\alpha_1}(D_\xi\bigcap B_{\rho/2}((X,Y))}\leq C_1,\;\;||\xi||_{C^{3,\alpha_1}(S_\xi\bigcap B_{\rho/2}((X,Y))}\leq C_1.
$$
\end{lemma}

Next lemma is a version of the previous one in the variables $(q,p)$.

\begin{lemma}\label{M25a}  Let the inequalities (\ref{F27az}) be valid and let $h\in C^{2,\alpha}(\overline{Q})$ be a solution  to (\ref{J4a}) with $R_c<R_1\leq R_1$ subject to $\delta\leq h_p\leq\delta^{-1}$ in $Q\bigcap B_\rho(q,p)$.
Then there exist constants $C_1$ and $\alpha_1\in (0,1)$ depending on $\delta$, $M$, $\omega$, $\rho$ and $R_1$, such that $h\in C^{3,\alpha_1}(Q\bigcap B_{\rho/2}(q,p))$ and
$$
||h||_{C^{3,\alpha_1}(Q\bigcap B_{\rho/2}(q,p))}\leq C_1.
$$
\end{lemma}

\begin{corollary}\label{Ap9b} Let ${\mathcal U}_\delta$ and ${\mathcal S}$  be introduced by (\ref{Ap5a}) and (\ref{Ja11azz}) respectively. The set $\overline{{\mathcal U}_\delta\bigcap\mathcal S}$ is bounded in
$C^{3,\alpha_1}_0(Q)\times (R_c,\infty)$ for certain $\alpha_1\in (0,1)$.

\end{corollary}
\begin{proof}
Since $|\xi'(x)|=|\Psi_x/\Psi_y|$, the result follows from the boundedness of $R$ for solitary waves, see Proposition \ref{Pr26b}(iii) and from the estimate $|\xi'|\leq \sup_{{\mathcal U}_\delta}|h_q|\leq 1/\delta$.
\end{proof}

Let $(h,R)\in {\mathcal U}_\delta$ be a solitary wave solution to (\ref{J4a}). We represent it as
\begin{equation}\label{F6a}
h(q,p)=H(p)+w(q,p),
\end{equation}
where $h(q,p)\rightarrow H(p)$ as $p\to\pm\infty$ and $H$ is given by (\ref{M4c}) with ${\mathcal R}(\theta)=R$. Let
$$
W_\delta=\{(w,R)\;:\; (h,R)\in {\mathcal U}_\delta\bigcap {\mathcal S}\},
$$
where $h$ and $w$ are by (\ref{F6a}).
Corollary \ref{Ap9b} implies
\begin{equation}\label{Ap23a}
\sup_{(w,R)\in W_\delta}\sup_{a\in\Bbb R}||w||_{C^{3,\alpha_1}(Q_a)}<\infty .
\end{equation}
By Proposition 2.5, \cite{W} the set of  $w$ in representation  (\ref{F6a}) satisfies the following equidecay at infinity property:
\begin{equation}\label{Ap23aa}
\lim_{q\to\pm\infty}\sup_{(w,R)\in {\mathcal U}_\delta}\sup_{p\in[0,1]}|w(q,p)|=0.
\end{equation}


Making the substitution $h=H+w$ in  the system (\ref{J4a})
\begin{eqnarray}\label{Ap5aa}
&&\Big(-\frac{(2H_p+w_p)w_p}{2(H_p+w_p)^2H_p^2}+\frac{w_q^2}{2(H_p+w_p)^2}\Big)_p
-\Big(\frac{w_q}{H_p+w_p}\Big)_q=0
\;\;\mbox{in $Q$}\nonumber\\
&& -\frac{(2H_p+w_p)w_p}{2(H_p+w_p)^2H_p^2}+\frac{w_q^2}{2(H_p+w_p)^2}+w=0\;\;\mbox{for $p=1$},\nonumber\\
&&w(q,0)=0.
\end{eqnarray}
We transform the system to
\begin{eqnarray}\label{Ap6a}
&&\Big(-\frac{w_p}{H_p^3}+{\mathcal J}(H,w)\Big)_p
-\Big(\frac{w_q}{H_p}-{\mathcal I}(H,w)\Big)_q=0
\;\;\mbox{in $Q$}\nonumber\\
&& -\frac{w_p}{H_p^3}+{\mathcal J}(H,w)
+w=0\;\;\mbox{for $p=1$},\nonumber\\
&&w(q,0)=0,
\end{eqnarray}
where
$$
{\mathcal J}(H,w)=\frac{(3H_p+2w_p)w^2_p}{2(H_p+w_p)^2H_p^3}+\frac{w_q^2}{2(H_p+w_p)^2}
\;\;\mbox{and}\;\;{\mathcal I}(H,w)=\frac{w_qw_p}{H_p(H_p+w_p)}.
$$

\;\;$\alpha_2\in (0,\alpha_1)$

Using uniform boundedness (\ref{Ap23a}), and the equidecay property (\ref{Ap23aa}), we obtain
\begin{equation}\label{Ap23b}
||w||_{C^{3}(Q_a)}\leq C \varpi(a),
\end{equation}
 where $\varpi(a)\to 0$ when $|a|\to\infty$.
and the constant $C$ in (\ref{Ap23b}) depends on $\delta$. 



\subsection{Uniform exponential decay of functions in ${\mathcal U}_\delta$}

Here we assume that $R>R_c$ and $H$ is given by (\ref{M4c}) with $\theta>\theta_c$.
We start this section by  considering the model linear problem
\begin{eqnarray}\label{Ap6aa}
&&\Big(-\frac{w_p}{H_p^3}\Big)_p-\Big(\frac{w_q}{H_p}\Big)_q=f\;\;\mbox{in $Q$}\nonumber\\
&& -\frac{w_p}{H_p^3}+w=g\;\;\mbox{for $p=1$},\nonumber\\
&&w(q,0)=0.
\end{eqnarray}

In the study of solvability of this problem an important role plays the spectral problem
\begin{eqnarray}\label{Ap6ad}
&&\Big(-\frac{v_p}{H_p^3}\Big)_p-\lambda^2\frac{w}{H_p}=0\nonumber\\
&&-\frac{v_p}{H_p^3}+w=0\;\;\mbox{for $p=1$},\nonumber\\
&&v(q,0)=0.
\end{eqnarray}
From Proposition \ref{PrAp8a} and \cite{KLN14a} it follows that the operator in this spectral problem has the same eigenvalues as the operator in the problem (\ref{Ap8a}) and hence it is positive definite if $H$ is a supercritical laminar flow corresponding to $R>R_c$. We denote by $\nu_0=\nu_0(R)$ the smallest eigenvalue, which is positive and continuously depending on $R$. Let $\lambda_1=\lambda_1(R):=\sqrt{\nu_0}$. 

Denote $C^{k,\alpha}_{\textrm{loc}}(Q)$  and $C^{k,\alpha}_{\textrm{loc}}(\Bbb R)$ the sets of even functions $u$ on $Q$ and $\Bbb R$ respectively with finite semi-norms $||u||_{C^{2,\alpha}(Q_q)}$ and $||u||_{C^{2,\alpha}(q,q+1)}$, $q\in\Bbb R$.

For $\beta\in \Bbb R$ we introduce the weighted H\"older spaces $C^{k,\alpha}_\beta(Q)$ and $C^{k,\alpha}(\Bbb R)$ consisting of functions on $Q$ and $\Bbb R$ with finite norms
$$
||u||_{C^{k,\alpha}_\beta(Q)}:=\sup_{a\in\Bbb R}e^{-\beta a}||\cdot||_{C^{k,\alpha}(Q_a)}
$$
and
$$
||u||_{C^{k,\alpha}_\beta(\Bbb R)}:=\sup_{a\in\Bbb R}e^{-\beta a}||\cdot||_{C^{k,\alpha}((a,a+1))}
$$
respectively. Let also $C^{k,\alpha}_{0,\beta}(Q)$ be the subspace of functions in $C^{k,\alpha}_\beta(Q)$ vanishing for $p=0$.

In the next proposition we present a solvability result in the spirit of the book \cite{KM2}.

\begin{proposition}\label{PrAp17a} Let $\alpha\in (0,1)$. Let also $f\in C^{0,\alpha}_{\textrm{loc}}(Q)$ and $g\in C^{1,\alpha}_{\textrm{loc}}(\Bbb R)$ satisfies
\begin{equation}\label{Ap17a}
\int_{\Bbb R} e^{-\lambda_1|q|}(||f||_{C^{0,\alpha}(Q_q)}+||g||_{C^{1,\alpha}(q,q+1)})dq<\infty.
\end{equation}
Then the problem (\ref{Ap6aa}) has a unique solution $w\in C^{2,\alpha}_{\textrm{loc}}(Q)$ subject to
\begin{equation}\label{Ap17aa}
||w||_{C^{2,\alpha}(Q_q)}=o(e^{\lambda_1|q|})\;\;\mbox{as $|q|\to \infty$},
\end{equation}
 which  satisfies the estimate
\begin{equation}\label{Ap17ab}
||w||_{C^{2,\alpha}(Q_q)}\leq \frac{C}{\lambda_1}\int_{\Bbb R} e^{-\lambda_1|q-q'|}(||f||_{C^{0,\alpha}(Q_{q'})}+||g||_{C^{1,\alpha}(q',q'+1)})dq'.
\end{equation}
The constant $C$ here depends on $\delta:=\min H_p$, on the norm $||H||_{C^{2,\alpha}([0,1])}$.
\end{proposition}

\begin{proof} We assume first that $g=0$ in (\ref{Ap6aa}).  Denote by $\lambda_j^2$, $j=1,\ldots$,  the eigenvalues of the problem (\ref{Ap6ad}), ordered according $0<\lambda_1<\lambda_2<\cdots$, and by $v_j$ corresponding eigenfunctions, which are normalized by
\[
\int_0^1v_jv_k\frac{dp}{H_p}=\delta_{j,k},
\]
where $\delta_{j,k}$ is  the Kronecker delta. We are looking for the solution to the problem (\ref{Ap6aa}) in the form
$$
w(q,p)=\sum_{j=1}^\infty c_j(q)v_j(p).
$$
Then $c_j$ satisfies
$$
\lambda_j^2c_j-c^{''}_j=f_j,
$$
where $f_j$ are coefficients in the decomposition
$$
H_pf=\sum_j^\infty f_jv_j.
$$
If $|c_j(q)|=o(e^{\lambda_1|q|})$ then the solution $c_j$  is given by
$$
c_j=\frac{1}{2\lambda_j}\int_{\Bbb R}e^{-\lambda_j|q-q'|}f_j(q')dq'.
$$
Using Minkowski's integral inequality, we get
$$
||w||_{L^2(0,1)}\leq C\int_{\Bbb R}e^{-\lambda_1|q-q'|}||f(q',\cdot)||_{L^2(0,1)}dq',
$$
where $C$ depends on $\delta$. Again applying Minkowski's inequality, we obtain (see \cite{KM2})
$$
||w||_{L^2(Q_t)}\leq C\int_{\Bbb R}e^{-\lambda_1|t-s|}||f||_{L^2(Q_s)}ds.
$$
Using local estimates for the problem (\ref{Ap6aa}), we arrive at
$$
||w||_{C^{2,\alpha}(Q_t)}\leq C\int_{\Bbb R}e^{-\lambda_1|t-s|}||f||_{C^{0,\alpha}(Q_s)}ds.
$$

If $g$ is not identically zero then we can find $W\in C^{2,\alpha}_{\textrm{loc}}(Q)$ satisfying $-\frac{W_p}{H_p^3}+W=g$ and  the inequality
$$
||W||_{C^{2,\alpha}(Q_t)}\leq C||g||_{C^{1,\alpha}(t-1,t+2)}.
$$
This reduce the problem to a the case $g=0$ as a final result we get the inequality
$$
||w||_{C^{2,\alpha}(Q_t)}\leq C\int_{\Bbb R} e^{-\lambda_1|t-s|}(||f||_{C^{0,\alpha}(Q_s)}+||g||_{C^{1,\alpha}(t,t+1)})ds.
$$
This estimate implies the required inequalities.
\end{proof}

As a corollary we give a more usual version of the solvability result

\begin{corollary}\label{PrAp12a} Let $\alpha\in (0,1)$ and $\beta\in (-\lambda_1,\lambda_1)$. Let also $f\in C^{0,\alpha}_\beta(Q)$ and $g\in C^{1,\alpha}_\beta(\Bbb R)$. Then the problem (\ref{Ap6aa}) has a unique solution $w\in C^{2,\alpha}_{0,\beta}(Q)$ which  satisfies the estimate
\begin{equation}\label{Ap6bc}
||w||_{C^{2,\alpha}_\beta(Q)}\leq \frac{C}{\lambda_1}\Big(\frac{1}{|\beta-\lambda_1|}+\frac{1}{|\beta+\lambda_1|}\Big)
(||f||_{C^{0,\alpha}_\beta(Q)}+||g||_{C^{k,\alpha}_\beta(\Bbb R)}),
\end{equation}
where $C$ depends on $\delta:=\min H_p$ and on the norm $||H||_{C^{2,\alpha}([0,1])}$.

If additionally  $f\in C^{0,\alpha}_{\beta'}(Q)$ and $g\in C^{1,\alpha}_{\beta'}(\Bbb R)$ with $\beta'\in (-\lambda_1,\lambda_1)$ then solutions corresponding to $\beta$ and $\beta'$ coincides.
\end{corollary}

Introduce
$$
\widehat{\lambda}_1(\delta)=\sup\lambda_1(R)
$$
where supremum is taken with respect to all supercritical laminar flows $(U,d,R)$ serving as asymptotics at $\pm\infty$ for  elements from ${\mathcal U}_\delta\bigcap {\mathcal S}$.

\begin{theorem}\label{TAp25} Let $\delta>0$ and $\varepsilon>0$ and let $\alpha\in (0,1)$. Then there exist $\widehat{q}>0$ depending on $\delta$, $\varepsilon$ and $\alpha$ such that all solitary waves $h$ in $W_\delta\bigcap {\mathcal S}$ satisfy the estimate
\begin{equation}\label{Ap6da}
||w||_{C^{2,\alpha}(Q_q)}\leq Ce^{-(\widehat{\lambda}_1(\delta)-\varepsilon)|q|}\;\;\mbox{for $|q|>\widehat{q}$},
\end{equation}
where $C=C(\delta,\alpha,\varepsilon)$ depends on $\delta$, $\alpha$ and $\varepsilon$.
\end{theorem}
\begin{proof}
Let $\zeta$ be a smooth function on $\Bbb R$ such that $\zeta(q)=1$ for $q>1$ and $\zeta(q)=0$ for $q<0$. Introduce the function
$$
w_{\widehat{q}}(q,p)=\zeta(q-\widehat{q})w(q,p).
$$
Then
\begin{eqnarray}\label{Ap11a}
&&\Big(-\frac{(w_{\widehat{q}})_p}{H_p^3}+\zeta(q-\widehat{q}+1){\mathcal J}(H,w_{\widehat{q}})\Big)_p
-\Big(\frac{(w_{\widehat{q}})_q}{H_p}-\zeta(q-\widehat{q}+1){\mathcal I}(H,w_{\widehat{q}})\Big)_q=f_{\widehat{q}}
\;\;\mbox{in $Q$}\nonumber\\
&& (-\frac{(w_{\widehat{q}})_p}{H_p^3}+\zeta(q-\widehat{q}+1){\mathcal J}(H,(w_{\widehat{q}})_p)
+w_{\widehat{q}}=g_{\widehat{q}}\;\;\mbox{for $p=1$},\nonumber\\
&&w(q,0)=0,
\end{eqnarray}
where the functions $f_{\widehat{q}}$ and $g_{\widehat{q}}$ vanish outside the interval $(\widehat{q},\widehat{q}+1)$.

Introduce the operators
$$
Aw=\Big(-\frac{w_p}{H_p^3}\Big)_p-\Big(\frac{w_q}{H_p}\Big)_q
$$
and
$$
Bw=\Big(-\frac{w_p}{H_p^3}+w\Big)\Big|_{p=1}.
$$
Let also
$$
F_{\widehat{q}}(w)=\Big(\zeta(q-\widehat{q}+1){\mathcal J}(H,w)\Big)_p
+\Big(\zeta(q-\widehat{q}+1){\mathcal I}(H,w)\Big)_q
$$
and
$$
G_{\widehat{q}}(w)=\zeta(q-\widehat{q}+1){\mathcal J}(H,(w)_p
$$
Then due to (\ref{Ap23b})
$$
||F_{\widehat{q}}(w_{\widehat{q}})||_{C^{0,\alpha}(Q_q)}\leq \varpi(\widehat{q})||w_{\widehat{q}}||_{C^{0,\alpha}(Q_q)}
$$
and
$$
||G_{\widehat{q}}(w_{\widehat{q}})||_{C^{0,\alpha}(q,q+1)}\leq \varpi(\widehat{q})||w_{\widehat{q}}||_{C^{0,\alpha}(Q_q)},
$$
where $\varpi(\widehat{q})$ tends to zero when $\widehat{q}$ tends to $\infty$. Applying Proposition \ref{PrAp17a}
\begin{eqnarray}\label{Ap24a}
&&||w_{\widehat{q}}||_{C^{2,\alpha}(Q_q)}\leq C\varpi(\widehat{q})\int_{-\infty}^\infty e^{-\widehat{\lambda}_1|q-q'|}||w_{\widehat{q}}||_{C^{2,\alpha}(Q_{q'})}dq'\nonumber\\
&&+Ce^{-\widehat{\lambda}_1|q-\widehat{q}|}||w||_{C^{2,\alpha}(Q_{\widehat{q}})}.
\end{eqnarray}
To simplify this estimate we put
$$
v(q)=\frac{1}{2\widehat{\lambda}_1}\int_{-\infty}^\infty e^{-\widehat{\lambda}_1|q-q'|}||w_{\widehat{q}}||_{C^{2,\alpha}(Q_{q'})}dq'.
$$
Then
\begin{eqnarray*}
&&-v^{''}+\widehat{\lambda}^2_1v=||w_{\widehat{q}}||_{C^{2,\alpha}(Q_{q})}\leq C\varpi(\widehat{q})v\\
&&+Ce^{-\widehat{\lambda}_1|q-\widehat{q}|}||w||_{C^{2,\alpha}(Q_{\widehat{q}})}.
\end{eqnarray*}
Let $\mu>$ be defined by $\widehat{\mu}^2=\widehat{\lambda}^2_1-C\varpi(\widehat{q})$.  Then
$$
v(q)\leq\frac{C}{2\widehat{\mu}}\int_{-\infty}^\infty e^{-\widehat{\mu}|q-q'|}
e^{-\widehat{\lambda}_1|q'-\widehat{q}|}dq'||w||_{C^{2,\alpha}(Q_{\widehat{q}})}.
$$
This together with (\ref{Ap24a}) implies (\ref{Ap6da}).

\end{proof}

The following proposition is very important in forthcoming bifurcation analysis.
\begin{proposition}\label{PrF12az} Let ${\mathcal U}_\delta$ and ${\mathcal S}$  be introduced by (\ref{Ap5a}) and (\ref{Ja11azz}) respectively. The set $\overline{{\mathcal U}_\delta\bigcap\mathcal S}$ is compact in
$C^{2,\alpha}_0(Q)\times (R_c,\infty)$.

\end{proposition}
\begin{proof} By Corollary \ref{Ap9b} the set $\overline{{\mathcal U}_\delta\bigcap {\mathcal S}}$ is bounded in $C^{3,\alpha_1}_0(Q)$ for certain $\alpha_1\in (0,1)$. Therefore its restriction onto  $[-N,N]\times [0,1]$ is compact in $C^{2,\alpha}([-N,N]\times [0,1])$ for any $N>0$.

From Theorem \ref{TAp25} it follows that for every $\epsilon_1>0$ there exists $\widehat{q}$ such that
$$
||h||_{C^{2,\alpha}(\widehat{q},\infty)\times [0,1])}+||h||_{C^{2,\alpha}(-\infty,\widehat{q})\times [0,1])}
\leq \epsilon_1
$$
for all $h\in \overline{{\mathcal U}_\delta\bigcap{\mathcal S}}$.
These two properties imply the required compactness.


\end{proof}

\section{Abstract bifurcation analysis}\label{Smain}

Here we present some known results about bifurcations in Banach spaces, which are taken from \cite{Ki1}, \cite{Ki2}, \cite{BTT} and \cite{CSrVar}.

Let ${\bf X}$, ${\bf Y}$ and ${\bf Z}$ be real Banach spaces, ${\bf X}\subset {\bf Z}$ and ${\bf X}$ continuously embedded into ${\bf Z}$. Consider a function
$$
{\mathcal F}=(F,G)=(F(x,\lambda),G(x,\lambda))\;\;\mbox{ mapping}\;\; {\mathcal U}\rightarrow  {\bf Z}\times {\bf Y},
$$
where ${\mathcal U}$ is an open set in ${\bf X}\times\Bbb R$ and $F$ and $G$ are analytic functions in ${\mathcal U}$. We assume that $(0,0)\in {\mathcal U}$ and
\begin{equation}\label{01}
{\mathcal F}(0,\lambda)=0\;\;\mbox{for all $(0,\lambda)\in {\mathcal U}$}.
\end{equation}
We are interested in solutions $(x,\lambda)$ to the problem\footnote{This formulation is different from that in  \cite{Ki1} and \cite{Ki2} because of the additional Banach space $Y$. The advantage of this setting is that it allows applications to boundary value problems and the forthcoming analysis does not undergo any changes.}
\begin{equation}\label{1}
{\mathcal F}(x,\lambda)=0
\end{equation}
with  $x\neq 0$.

Let $\delta>0$ and
$$
B_\delta=\{x\in {\bf X}\,:\,||x||_{\bf X}<\delta\}\;\;\;\mbox{and}\;\;I_\delta=\{\lambda\in \Bbb R\,:\,|\lambda|<\delta\}.
$$
For sufficiently small $\delta$ the set $B_\delta\times I_\delta$ belongs to ${\mathcal U}$ and the analytic functions  $F$ and $G$ can be represented as
\begin{equation}\label{2}
F(x,\lambda)=\sum_{j\geq 0,k\geq 0} \lambda^jF_{jk}(x)\;\;\mbox{and}\;\;G(x,\lambda)=\sum_{j\geq 0,k\geq 0}^\infty \lambda^jG_{jk}(x),
\end{equation}
where $F_{jk}:{\bf X}^k\rightarrow {\bf Z}$ and $G_{jk}:{\bf X}^k\rightarrow {\bf Y}$ are $k$--linear, symmetric and continuous operator, and the above series are convergent in $B_\delta\times I_\delta$. Due to (\ref{01}), $F_{j0}=0$ and $G_{j0}=0$
for all $j\geq 0$. We represent $F$ and $G$ as
\begin{equation}\label{F1a}
{\mathcal F}(x,\lambda)=A(\lambda)x+\widehat{{\mathcal F}}(x,\lambda),
\end{equation}
where
\begin{equation}\label{D4a}
A(\lambda)=\sum_{j=0}^\infty \lambda^jA_j\,:\, X\to Z\times Y,\;\;A_jx=(F_{j1}(x),G_{j1}(x))
\end{equation}
and
$$
\widehat{{\mathcal F}}(x,\lambda)=(\widehat{F}(x,\lambda),\widehat{G}(x,\lambda)),\;\;\widehat{F}(x,\lambda)=\sum_{j\geq 0,k\geq 2} \lambda^jF_{jk}(x),\;\;\widehat{G}(x,\lambda)=\sum_{j\geq 0,k\geq 2}^\infty \lambda^jG_{jk}(x).
$$

We assume  that the Frechet derivatives
$$
(D_xF(x,\lambda)v,D_xG(x,\lambda)v)\,:\,{\bf X}\rightarrow {\bf Z}\times {\bf Y}
$$
is a Fredholm operator of index zero for all $(x,\lambda)\in {\mathcal U}$ satisfying (\ref{1}).

The Frechet derivative at $x=0$ is given by
$$
(D_xF(0,\lambda)v,D_xG(0,\lambda)v)=A(\lambda)v.
$$

We will use the following spectral problem in order to study the bifurcation of (\ref{1}) at $(x,\lambda)=(0,0)$:
\begin{equation}\label{D3a}
A(\lambda)v=\mu(\lambda)(v,0).
\end{equation}

 We assume also that for $\lambda=0$ the problem (\ref{D3a})
has the eigenvalue $\mu=0$, which is algebraically simple. Therefore there is a function $\mu(\lambda)$ of eigenvalues of  (\ref{D3a}) consisting of simple eigenvalues of (\ref{D3a}). This function is analytic in a neighborhood of $\lambda=0$ and there are no other eigenvalues in this neighborhood. There is also an analytic function $v(\lambda)$ consisting of eigenfunctions of the (\ref{D3a}) corresponding to the eigenvalue $\mu(\lambda)$. We write the function $\mu$ as
\begin{equation}\label{3}
\mu(\lambda)=\sum_{j=m}^\infty\hat{\mu}_j\lambda^j.
\end{equation}
Our main assumption is
\begin{equation}\label{4}
m\;\;\mbox{is odd and}\;\;\hat{\mu}_m\neq 0.
\end{equation}

It is reasonable to study  small bifurcation curves of (\ref{1})  first and then to discuss their  continuation to curves containing large bifurcating solutions.

\subsection{Small bifurcation curves}\label{SD1}

Since  $v(\lambda)$ is analytic function, it can be represented as
$$
v(\lambda)=\sum_{j=0}^\infty v_j\lambda^j.
$$
 Furthermore, the spectral problem $A(\lambda)^*(\tilde{z},\tilde{y})=\tilde{\mu}\tilde{z}$ has the eigenvalue $\tilde{\mu}=\mu(\lambda)$ which is also simple and we denote the corresponding eigenfunction by
 $\tilde{w}(\lambda)=(\tilde{z}(\lambda),\tilde{y}(\lambda))\in {\bf Z}^*\times {\bf Y}^*$. They can be chosen  such that the functions $\tilde{z}(\lambda)$ and $\tilde{y}(\lambda)$ are analytic in $\lambda$ and,
 due to algebraic simplicity of the eigenvalue $\mu(\lambda)$, satisfy
 $$
 \langle v(\lambda),\tilde{z}(\lambda)\rangle\neq 0\;\;\mbox{for small $\lambda$}.
 $$
 The coefficients $v_j$ can be chosen to satisfy
$$
\langle v_j,\tilde{z}(0)\rangle=0\;\;\mbox{for $j=1,\ldots$}.
$$

 We put
 $$
 \widehat{v}(\lambda)=v(\lambda)\;\;\mbox{and}\;\;\widehat{w}(\lambda)=(\widehat{z},\widehat{y})=\frac{\tilde{w}(\lambda)}{\langle v(\lambda),\tilde{z}(\lambda)\rangle}.
 $$
 Both these functions are still analytic and satisfy
  $$
 \langle\widehat{v}(\lambda),\widehat{z}(\lambda)\rangle=1\;\;\mbox{for small $\lambda$}.
 $$

In order to reduce the system (\ref{1}) to an one dimensional equation one can use the Lyapunov-Schmidt method. We define the following projections
$$
P_\lambda x=\langle x,\widehat{z}(\lambda)\rangle \widehat{v}(\lambda),\;\;\widehat{P}_\lambda(z,y)=\langle (x,y),\widehat{w}(\lambda)\rangle (\widehat{v}(\lambda),0).
$$
Here and in what follows we identify the element $w\in {\bf X}$ with the element $(w,0)\in {\bf Z}\times {\bf Y}$ in the definition of $\widehat{P}_\lambda$. Clearly $P_\lambda x=\widehat{P}_\lambda(x,0)$. Now we can write (\ref{1}) as
\begin{eqnarray}\label{D5a}
&&\widehat{P}_\lambda{\mathcal F}(v+w,\lambda)=0,\;\;x=P_\lambda x+(I-P_\lambda)x=v+w,\nonumber\\
&&(I-\widehat{P}_\lambda){\mathcal F}(v+w,\lambda)=0.
\end{eqnarray}
Taking $v=t\widehat{v}(\lambda)$ and using representation (\ref{F1a}), we get
\begin{equation}\label{F1b}
t\mu(\lambda)+\langle \widehat{{\mathcal F}}(t\widehat{v}+w,\lambda),\widehat{w}(\lambda)\rangle=0
\end{equation}
and
\begin{equation}\label{F1ba}
(I-\widehat{P}_\lambda)(A(\lambda)w+\widehat{{\mathcal F}}(t\widehat{v}+w,\lambda))=0.
\end{equation}
We write the second equation in the form
\begin{equation}\label{F1bb}
(I-\widehat{P}_\lambda)\big(A(\lambda)w+\widehat{{\mathcal F}}_1(t\widehat{v}+w,\lambda)+\widehat{{\mathcal F}}(t\widehat{v},\lambda)\big)=0,
\end{equation}
where
\begin{equation}\label{F1bc}
\widehat{{\mathcal F}}_1(t\widehat{v},w,\lambda))=\widehat{{\mathcal F}}(t\widehat{v}+w,\lambda)-\widehat{{\mathcal F}}(t\widehat{v},\lambda).
\end{equation}
Since the operator
$$
A(\lambda):(I-P_\lambda){\bf X}\rightarrow (I-\widehat{P}_\lambda)({\bf Z}\times {\bf Y})
$$
is invertible by
using the implicit function theorem one can find $w=w(t,\lambda)$ which is analytically depends on $t$ and $\lambda$ and one can verify that
$$
w(t,\lambda)=t^2\overline{w}(t,\lambda),
$$
where the function $\overline{w}(t,\lambda)$ is also analytic in $t$ and $\lambda$. Now the equation (\ref{F1b}) can be written as
\begin{equation}\label{F1c}
\mu(\lambda)+t^{-1}\langle \widehat{{\mathcal F}}(t\widehat{v}+t^2\overline{w}(t,\lambda),\lambda),\widehat{w}(\lambda)\rangle=0
\end{equation}
Thus we arrive at the equation
\begin{equation}\label{F2a}
\mu(\lambda)+t{\mathcal T}(t,\lambda)=0,
\end{equation}
where ${\mathcal T}$ is an analytic function.

The main local bifurcation result for the equation (\ref{F2a}), and hence for the equation (\ref{1}), is presented in the next theorem, proved in \cite{Ki1}, Theorem I.16.4.

\begin{theorem}\label{ThJ3} Let $\mu(\lambda)$ satisfy {\rm (\ref{3})} and {\rm (\ref{4})}. Then bifurcating solutions to {\rm (\ref{1})} consist of two groups of the same number of curves   bifurcating at $(x,\lambda)=(0,0)$, and this number is at least one and at most $m$. 
One group consists of
\begin{eqnarray}\label{Dec28a}
&&x(s)=s\widehat{v}(\lambda)+\psi(s\widehat{v}(\lambda),\lambda(s))=s\widehat{v}(\lambda)+o(|s|),\nonumber\\
&&\lambda(s)=s^\gamma\sum_{k=0}^\infty a_ks^{k/p\gamma_2}\;\;\mbox{for $0\leq s<\delta$}
\end{eqnarray}
and in another  group the formulae for $x$ is the same but
\begin{equation}\label{Dec28aa}
\lambda(s)=(-s)^{\tilde{\gamma}}\sum_{k=0}^\infty \tilde{a}_k(-s)^{k/\tilde{p}\tilde{\gamma}_2}\;\;\mbox{for $-\delta< s\leq 0$}.
\end{equation}
Here  $\gamma=\gamma_1/\gamma_2$, $\tilde{\gamma}=\tilde{\gamma}_1/\tilde{\gamma}_2$ and $\gamma_1$, $\gamma_2$, $\tilde{\gamma}_1$, $\tilde{\gamma}_2$ and $p$, $\tilde{p}$ are positive integers.
The function $\psi$ is analytic near the point $(0,0)$ and  satisfies
$$
\langle \psi,\widehat{z}_0'\rangle=0.
$$
 We include here possible vertical bifurcation $\lambda(s)\equiv 0$. If $\lambda(s)$ is not the vertical bifurcation then $a_0\neq 0$, $\tilde{a}_0\neq 0$.
\end{theorem}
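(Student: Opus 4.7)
The Lyapunov--Schmidt reduction carried out above already reduces the abstract equation (\ref{1}) to the analytic scalar equation (\ref{F2a}), namely $F(t,\lambda):=\mu(\lambda)+t\mathcal{T}(t,\lambda)=0$ in two real variables near the origin, with the infinite-dimensional correction $x=t\widehat{v}(\lambda)+t^{2}\overline{w}(t,\lambda)$ analytic in $(t,\lambda)$. It therefore suffices to classify all branches of the zero set of $F$ through $(0,0)$ other than the trivial line $t=0$ and then push them back through the analytic formula for $x$.

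The first step is to apply the Weierstrass preparation theorem. By hypothesis (\ref{4}) the function $F(0,\lambda)=\mu(\lambda)$ vanishes to order exactly $m$ at $\lambda=0$, so $F$ is $\lambda$-regular of order $m$ at the origin and there exist an analytic unit $U(t,\lambda)$ with $U(0,0)\neq 0$ and a Weierstrass polynomial
\[
P(t,\lambda)=\lambda^{m}+a_{m-1}(t)\lambda^{m-1}+\ldots+a_{0}(t),\qquad a_{j}(0)=0,
\]
such that $F=U\cdot P$ near the origin. The remaining problem is the algebraic one of describing the real zero set of $P$.

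Next I would apply the Puiseux theorem over $\mathbb{R}\{t\}$. Factor $P=\prod_{j=1}^{N}P_{j}$ into real-irreducible factors in $\mathbb{R}\{t\}[\lambda]$ with $\deg_{\lambda}P_{j}=d_{j}$ and $\sum d_{j}=m$, so automatically $1\le N\le m$. Each $P_{j}$ admits a normal parametrization by a local uniformizer $\tau$ of the form $t=\eta_{j}\tau^{d_{j}/p_{j}}$, $\lambda=\phi_{j}(\tau)$, with $\eta_{j}\in\{+1,-1\}$ and $\phi_{j}$ analytic, $\phi_{j}(0)=0$. Reading off the Newton polygon of $P_{j}$ gives the leading exponent $\gamma=\gamma_{1}/\gamma_{2}$ and the step $1/(p\gamma_{2})$ appearing in (\ref{Dec28a})--(\ref{Dec28aa}), while the Puiseux expansion of $\phi_{j}$ supplies the coefficients $a_{k}$ (respectively $\tilde{a}_{k}$). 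The two groups in the statement then correspond, factor by factor, to the $s\ge 0$ and $s\le 0$ halves of the resulting real parametrizations, and the vertical branch $\lambda(s)\equiv 0$ is recovered precisely when some $P_{j}$ equals $\lambda$ (i.e.\ $\mathcal{T}(t,0)$ vanishes identically in $t$).

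The main obstacle will be verifying that the two groups have the same cardinality. Oddness of $m$ together with $\mu_{m}\neq 0$ gives at least one real root of $F(\pm t,\cdot)=0$ near $\lambda=0$ for each small $t>0$, by the intermediate value theorem applied to $\lambda\mapsto\mu(\lambda)+t\mathcal{T}(t,\lambda)$, which yields the lower bound of one curve on each side. For the equality of counts I would show that the real locus of each irreducible germ $\{P_{j}=0\}$ in $\mathbb{R}^{2}$ through the origin is a single arc passing through $0$, hence contributes exactly one curve to each group, rather than a half-arc confined to a single half-plane or an isolated real point. This in turn rests on a real-analytic refinement of Puiseux, using that $P_{j}$ is real-irreducible and that the multiplicities appearing in its Newton polygon are compatible with the odd integer $m=\sum d_{j}$; ruling out the asymmetric and isolated cases is the essential computational task and the one place where the hypothesis that $m$ is odd is genuinely used.
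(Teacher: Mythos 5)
Your overall route --- reduce to the scalar analytic equation (\ref{F2a}), apply the Weierstrass preparation theorem in $\lambda$ (legitimate, since $F(0,\lambda)=\mu(\lambda)$ has finite order $m$ by (\ref{3})--(\ref{4})), factor into real-irreducible Weierstrass factors, read off Puiseux expansions to get the form (\ref{Dec28a})--(\ref{Dec28aa}), and use the intermediate value theorem together with the oddness of $m$ to get at least one real branch on each side --- is the standard argument behind the cited result; the paper itself gives no proof beyond the reference to Kielh\"ofer, Theorem I.16.4, so your reconstruction of the reduction, of the lower bound, and of the upper bound $m$ (at most $m$ roots of the degree-$m$ Weierstrass polynomial for each fixed amplitude) is correct as far as it goes.

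The step you yourself flag as the main obstacle --- that the two groups have equal cardinality because each real-irreducible factor has real locus ``a single arc passing through $0$, hence contributes exactly one curve to each group'' --- does not hold as stated. A real-irreducible factor whose real locus is a single arc through the origin can be tangent to the line $s=0$ and lie entirely in one half-plane: $P_j(s,\lambda)=\lambda^{2}-s$ is irreducible over $\mathbb{R}\{s\}$ (even over $\mathbb{C}\{s\}$), its real zero set is the single smooth arc $s=\lambda^{2}$ through the origin, yet in the parametrization of the theorem this arc produces the two graphs $\lambda=\pm s^{1/2}$ for $s>0$ and nothing for $s<0$. Oddness of $m$ does not exclude such factors: $F(s,\lambda)=\lambda^{3}-s\lambda$ arises from $\mu(\lambda)=\lambda^{3}$ (so $m=3$, $\mu_{3}\neq 0$) and ${\mathcal T}(s,\lambda)=-\lambda$, and its zero set yields three branches $\lambda=0,\ \pm\sqrt{s}$ for $s>0$ but only $\lambda=0$ for $s<0$. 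So your dichotomy ``full arc through $0$ versus half-arc or isolated real point'' is not exhaustive, and the proposed proof of equal cardinality collapses at exactly the point where it was needed; note also that in this situation the analytic continuation of $\lambda=+\sqrt{s}$ through the uniformizer is $\lambda=-\sqrt{s}$, so the pairing asserted in Proposition \ref{PrDa} would match two branches of the same group rather than one from each. To close the argument you would need either an extra hypothesis excluding branches with even contact order with $\{s=0\}$, or a different counting convention (curves as irreducible real-analytic arcs). A smaller inaccuracy: the number of curves in a group is not the number $N$ of irreducible factors (in the example $N=2$ while the $s>0$ group has three graphs); only the bound by $m=\sum d_j$ survives.
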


It follows from \cite{Ki1}, \cite{Ki2} the next

\begin{proposition}\label{PrDa} Let $M$ be the number of branches in each group of branches in the above theorem.  We numerate the branches  (\ref{Dec28a}) in increasing order if $M>1$, namely
$\lambda_1(s)<\cdots <\lambda_M(s)$ for $s\in (0,\delta)$. 
Then for each branch $(\lambda_j,x_j)$ there exists a unique branch $(\lambda^{(-)}_j,x^{(-)}_j)$ from the second group (\ref{Dec28aa}) in the above theorem  such that the function
\begin{equation}\label{F19a}
g_j(s)=(\lambda_j(s),x_j(s))\;\;\mbox{for $s\geq 0$ and}\;\;g_j(s)=(\lambda^{(-)}_j(s),x^{(-)}_j(s))\;\;\mbox{for $s<0$}
\end{equation}
admits an analytic injective reparametrization. If $\lambda_j$ has the form (\ref{Dec28a}) then the parametrization can be chosen as  $t=s^{1/p\gamma_2}$.

Moreover the following alternatives are valid for the above branches

\smallskip
(i) If $\lambda_j$ is not identically zero then $\lambda'(s)\neq 0$ for small $s\neq 0$ and

 $D_x (F,G)(x_j(s),\lambda_j(s))$ is invertible for small $s\neq 0$,

\smallskip
 (ii) $\lambda'(s)\neq 0$ for small $s\neq 0$ and $\mu_1(s)=0$ for all small $s$.

\smallskip
(iii) If $\lambda_j\equiv 0$ then  the corresponding solution is given by
$$
\lambda=0\;\;\;x(s)=s\widehat{v}_0+\psi(s\tilde{v}_0,0).
$$
\end{proposition}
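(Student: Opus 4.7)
The proposition organizes the output of Theorem~\ref{ThJ3} into paired branches and classifies their qualitative behavior, so the plan is to work directly from the scalar reduced equation (\ref{F2a}), $\mu(\lambda)+t\,\mathcal{T}(t,\lambda)=0$. This is a real-analytic equation in two variables and, under the non-degeneracy hypothesis (\ref{4}), its germ at $(0,0)$ decomposes by Puiseux/Weierstrass preparation into finitely many irreducible analytic branches. Each such branch admits a normalization by a single analytic parameter $\tau\mapsto(t(\tau),\lambda(\tau))$. Restricting the normalization to $\tau\ge 0$ and $\tau<0$ reproduces precisely the Puiseux expansions (\ref{Dec28a}) and (\ref{Dec28aa}) of Theorem~\ref{ThJ3}, with the substitution $s=\tau$ giving the analytic reparametrization claimed in the proposition.

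To produce the pairing and the function $g_j$, I would order the positive branches by their $\lambda$-values as prescribed and match each to the (unique) restriction of a normalized irreducible branch to $\tau<0$. The lift back from $(t,\lambda)$ to $(x,\lambda)$ is automatic: writing $x(\tau)=t(\tau)\widehat{v}(\lambda(\tau))+w(t(\tau),\lambda(\tau))$ with $w$ the analytic Lyapunov--Schmidt solution found right after (\ref{F1bb}), the resulting curve $g_j(s)=(x(s),\lambda(s))$ inherits the analytic injective reparametrization, and the particular form $t=s^{1/p\gamma_2}$ can be read off from the order of contact of the normalization with the axis $\{t=0\}$.

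For the trichotomy (i)--(iii) I would argue branch by branch using the normalization parameter $\tau$. If $\lambda(\tau)\equiv 0$ one is on a vertical branch and inserting $\lambda=0$ into the Lyapunov--Schmidt solution gives $x(s)=s\widehat{v}_0+\psi(s\widehat{v}_0,0)$, which is exactly case (iii). Otherwise $\lambda(\tau)$ is a non-constant analytic function, so $\lambda'(\tau)\neq 0$ for all small $\tau\neq 0$, confirming the first half of cases (i) and (ii). For the second half, I would use analytic perturbation theory for the Fredholm operator $D_x\mathcal{F}(x(\tau),\lambda(\tau))$: its eigenvalue $\mu_1(\tau)$ that perturbs from the simple eigenvalue $\mu=0$ of $A(0)$ at $\tau=0$ is itself analytic in $\tau$, so either $\mu_1(\tau)\equiv 0$ (case (ii)) or $\mu_1(\tau)\neq 0$ off an isolated set (case (i)); in the latter case the Fredholm alternative of index zero upgrades non-vanishing of $\mu_1$ to invertibility of $D_x\mathcal{F}$.

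The main obstacle is the correctness of the pairing. Over $\mathbb{C}$ the bijection between ``positive'' and ``negative'' Puiseux branches comes for free from analytic continuation through the origin, but over $\mathbb{R}$ one must verify that each irreducible real-analytic branch is actually traversed by its normalization parameter in both signs and that the induced ordering on the negative side matches the one imposed on the positive side. This is the content of the abstract bifurcation analysis of \cite{Ki1,Ki2} and it relies on the leading-order coefficients $a_0$, $\tilde{a}_0$ in (\ref{Dec28a})--(\ref{Dec28aa}) being non-zero except in the vertical case, together with the sign change of $\mu(\lambda)$ at $\lambda=0$ guaranteed by the odd exponent $m$ in assumption (\ref{4}).
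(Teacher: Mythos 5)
The paper gives no proof of this proposition beyond the citation to \cite{Ki1}, \cite{Ki2}, and your reconstruction — Lyapunov--Schmidt reduction to the scalar equation (\ref{F2a}), Newton-polygon/Puiseux decomposition of its zero set into finitely many irreducible real-analytic branches, pairing of the positive and negative half-branches through the normalization of each branch, and the analytic-eigenvalue dichotomy ($\mu_1\equiv 0$ versus $\mu_1\neq 0$ for small $\tau\neq 0$, upgraded to invertibility by the index-zero Fredholm property) for the trichotomy (i)--(iii) — is precisely the argument of those references. The point you single out as the main obstacle, namely that each irreducible real branch is traversed by its normalizing parameter in both signs so that the half-branches pair up into analytic arcs, is indeed the crux, and your deferral of it to \cite{Ki1}, \cite{Ki2} matches what the paper itself does.
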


\begin{remark} The conditions (\ref{3}) and (\ref{4}) can be relaxed as follows. We can assume that the function in the left hand side of the bifurcation equation (\ref{F2a}) is not identically zero. In this case Theorem \ref{ThJ3} and Proposition \ref{PrDa} are still true except that the number of branches can be zero, but if we have a branch for small $s>0$ then there is also a branch for small negative $s$ and the branch defined now for small $|s|$ admits an analytic re-parametrization, see \cite{Ki1}, Sect. 1.15, 1.16.

\end{remark}

\subsection{Global bifurcation, continuation of local bifurcation curve.}\label{SectJ3}

We denote by ${\mathcal B}^+$ the branch of small bifurcating solutions constructed in the previous section, see (\ref{F19a}). We denote one of them by
$(\kappa(s),\Lambda(s))$, which is given for $0<s<\epsilon$. By Proposition \ref{PrDa} there exists at least one such curve and it can be extended in a neighborhood of $0$ and the extended curve admits an analytic reparametrization. But in what follows we will use only the branch defined for $s\in (0,\epsilon)$.

 We assume that

(A) ${\mathcal F}(0,\lambda)=0$ for all $(0,\lambda)\in {\mathcal U}$.

(B) $D_x{\mathcal F}[(x,\lambda)]$ is a Fredholm operator of index zero when ${\mathcal F}(x,\lambda)=0$, $(x,\lambda)\in {\mathcal U}$.

(C) There is an interval $I\in\Bbb R$, in our case $I=(0,\epsilon)$ but having in mind a more general application of the next extension theorem we give a more general presentation, and an analytic function $(x,\lambda)=(\kappa(s),\Lambda(s))$, $s\in I$, $(\kappa(s),\Lambda(s))\in {\mathcal U}$, such that ${\mathcal F}(\kappa(s),\Lambda(s)))=0$ and $\Lambda'(s)\neq 0$ for $\lambda\in I$. Furthermore the operator $D_x{\mathcal F}[(\kappa(s),\Lambda(s))]$ is invertible for  $s\in I$.

Introduce
\begin{eqnarray}\label{D5b}
&&{\mathcal S}=\{(x,\lambda)\in{\mathcal U}\,:\, {\mathcal F}(x,\lambda)=0\},\nonumber\\
&&{\mathcal B}^+=\{(\kappa(s),\Lambda(s))\,:\,s\in I\}.
\end{eqnarray}

Usually in formulations of the global bifurcation theorem one starts from a local analytic branch coming from the analytic version  of Crandall, Rabinowitz Theorem, see \cite{CrRab}. Actually,  it is sufficient to replace this by a local analytic curve satisfying (A)-(C) and then one can use the same argument concerning extension of the local analytic curve to a global analytic one used in  \cite{BTT}.
We use the following version of the global bifurcation theorem taken from \cite{CSrVar}, where some clarifications for the main version of the bifurcation theorem from \cite{BTT} can be found.

\begin{theorem}\label{ThJ3a} Suppose {\rm (A)-(C)} hold and for some sequence ${\mathcal K}_j$, $j\in \mathbb{N}$, of bounded closed subsets of ${\mathcal U}$ with ${\mathcal U}=\bigcup_{j\in\mathbb{N}}{\mathcal K}_j$, the set ${\mathcal S}\bigcap {\mathcal K}_j$ is compact for each $j\in\mathbb{N}$.
Then there exists a continuous curve- ${\mathcal B}$, which extends ${\mathcal B}^+$ as follows

\medskip
(a) ${\mathcal B}=\{(\kappa(s),\Lambda(s))\,:\, s\in \Bbb R\}\subset {\mathcal U}$,\\
where $(\kappa,\Lambda): \Bbb R\rightarrow {\bf X}\times \Bbb R$ is continuous;

\medskip
(b) ${\mathcal B}^+\subset {\mathcal B}\subset {\mathcal S}$;

\medskip
(c) ${\mathcal B}$ has a real-analytic reparametrization locally around each of its points;



\medskip
(d) One of the following alternatives occurs:

$(\alpha)$ for every  $j\in \mathbb{N}$, there exists $s_j>0$ such that $(\kappa(s);\Lambda(s))$ does not belong to ${\mathcal K}_j$ for all $s>0$ with $s>s_j$;

$(\beta)$ there exists $T>0$ such that $(\kappa(s+T),\Lambda(s+T))=(\kappa(s),\Lambda(s))$ for all $s>0$.

Moreover, such a curve of solutions to ${\mathcal F}(x,\lambda)=0$ having the properties (a)-(d) is unique (up to reparametrization).





\end{theorem}


\section{The first bifurcation on the branch of solitary waves}

\subsection{Bifurcation equation}

Let $C^{k,\alpha}(Q)$ and $C^{k,\alpha}(\Bbb R)$ be usual H\"{o}lder spaces in $\overline{Q}$ and $\Bbb R$ respectively and let $C^{k,\alpha}_{0,e}(Q)$ be the subspace of $C^{k,\alpha}(Q)$ consisting of even functions $h$ vanishing for $p=0$. The space $C^{k,\alpha}_e(\Bbb R)$ consists of even functions in $C^{k,\alpha}(\Bbb R)$.  We put
$$
{\bf X}=C^{2,\alpha}_{0,e}(Q),\;\;{\bf Z}=C^{0,\alpha}_e(Q),\;\;{\bf Y}=C^{2,\alpha}_e(\Bbb R).
$$
and
\begin{equation}\label{Ja11a}
{\mathcal U}=\bigcup_\delta{\mathcal U}_\delta,
\end{equation}
where ${\mathcal U}_\delta$ was introduced by (\ref{Ap5a}).

Assuming that $h=h(q,p;t)$ and $R=R(t)$ is an element of the branch (\ref{J4ac}), we are looking for a solution to  the problem (\ref{J4a}) in the form $\widehat{h}=h+w$. Then for the function $w$ we get the following equations
\begin{eqnarray*}
&&F(w;t):=\Big(\frac{2h_p^3w_q-(1+h_q^2)(2h_pw_p+w_p^2)+h_p^2w_q^2}{2h_p^2(h_p+w_p)^2}\Big)_p\\
&&-\Big(\frac{h_pw_q-h_qw_p}{h_p(h_p+w_p)}\Big)_q=0\;\;\mbox{in $Q$}\\
&&G(w;t):=\frac{2h_p^3w_q-(1+h_q^2)(2h_pw_p+w_p^2)+h_p^2w_q^2}{2h_p^2(h_p+w_p)^2}-w=0\;\;\mbox{for $p=1$},\\
&&w(q,0;t)=0.
\end{eqnarray*}
Now this problem has a parameter $t>0$ and we are looking for the function $w\in {\bf X}$.

Let us introduce
\begin{equation}\label{M7a}
\widehat{\mathcal U}_\delta =\{(w,t)\,:\,(\widehat{h}, R(t))\in {\mathcal U}_\delta\,, t>0\},\;+;\widehat{h}(q,p;t)=h(q,p;t)+w,
\end{equation}
$$
\widehat{\mathcal U}=\bigcup_\delta \widehat{{\mathcal U}}_\delta
$$
and
\begin{equation}\label{M7az}
\widehat{{\mathcal S}}=\{(w,t)\,:\,(\widehat{h}, R(t))\in {\mathcal S} \,,t>0\}.
\end{equation}
The above sets are convenient for study of global bifurcation branches through the bifurcation point $(h(t_*),R(t_*))$. Thus, $(w,t)\in {\mathcal U}_\delta$ is equivalent to $(\widehat{h}(t),R(t))\in \widehat{\mathcal U}_\delta$.
Hence
\begin{equation}\label{Ja11aa}
{\mathcal F}=(F,G)\,:\, \widehat{{\mathcal U}}\rightarrow {\bf Z}\times {\bf Y}.
\end{equation}
Using this definition, we see that
\begin{equation}\label{Ja11b}
F(0;t)=0\;\;\;\mbox{and}\;\;\;G(0;t)=0.
\end{equation}
Suppose  that the property (\ref{Okt30a}) is satisfied, we put $\lambda=t-t_*$, in order to keep the same notation as in the previous sections.
Our aim is to study bifurcations of the problem
\begin{equation}\label{Ja11ba}
{\mathcal F}(w;t_*+\lambda)=(F(w;t_*+\lambda),G(w;t_*+\lambda))=0
\end{equation}
at $\lambda=0$ and $w=0$.

The Frechet derivative of the pair $(F,G)$ is the following
\begin{equation}\label{J4aa}
Av=A(w,\lambda)v=\Big(\frac{\lambda^2(h+w)_qv_q}{(h+w)_p^2}-\frac{(1+(h+w)_q^2)v_p}{(h+w)_p^3}\Big)_p-\Big(\frac{v_q}{(h+w)_p}-\frac{(h+w)_qv_p}{(h+w)_p^2}\Big)_q
\end{equation}
and
\begin{equation*}
{\mathcal N}v={\mathcal N}(w,\lambda)v=(N v-v)|_{p=1},
\end{equation*}
where
\begin{equation*}
N v=N(w,\lambda)v=\Big(-\frac{(h+w)_qv_q}{(h+w)_p^2}+\frac{(1+(h+w)_q^2)v_p}{(h+w)_p^3}\Big)\Big|_{p=1}.
\end{equation*}
The eigenvalue problem for the Frechet derivative, which is important for the analysis of bifurcations of the problem
(\ref{Ja11ba}), is the following
\begin{eqnarray*}
&&Av=\mu v\;\;\mbox{in $Q$},\\
&&{\mathcal N}v=0\;\;\mbox{for $p=1$},\\
&&v=0\;\;\mbox{for $p=0$}.
\end{eqnarray*}
Here $Q$ is the same strip as in Sect. \ref{SJ29a}.

By (\ref{Okt30a}) and Theorem \ref{T1a} $\mu_1(t_*)=0$ and the eigenvalue $\mu_1(t)$ is simple and admits the representation (\ref{F18a}). We denote by $\widehat{v}=\widehat{v}(t)$ the corresponding eigenfunction, which is normalized by
$$
\int_{Q}\widehat{v}^2dqdp=1.
$$
The the projection operator, introduced in Sect. \ref{SD1}, can be written as
\begin{equation}\label{M24a}
P_0w=\int_{Q}w\widehat{v}dqdp\,\widehat{v},\;\;\widehat{P}_0(x,y)=\Big(\int_{Q}x\widehat{v}dqdp
+\int_{\Bbb R}y\widehat{v}(q,0)dq\Big) (\widehat{v},0)
\end{equation}


\subsection{Lemmas}

In this section we prove Proposition \ref{Pr26} (see Remark \ref{R21a}) and  present some mostly known results which will be used in the proofs of  Theorems \ref{TrF18b} and \ref{ThF18}.

\begin{lemma}\label{LM28a} Let $R>R_c$ and $(\Psi,\xi)\in C^{2,\alpha}(D_\xi)\times C^{2,\alpha}(\Bbb R)$ satisfy (\ref{K2a}).Then the following properties hold:

(i) If
\begin{equation}\label{Au18av}
|\xi'(X)|\leq M\;\;\mbox{for $X\in\Bbb R$}
\end{equation}
then
\begin{equation}\label{Okt18aa}
\Psi_Y^2(X,\xi(X))\geq\frac{2a}{1+M^2},\;\;\mbox{where}\;\; a:=\sup_X (R-\xi(X)).
\end{equation}

(ii) If $R_0=\infty$ then
\begin{equation}\label{Okt18ab}
\Psi_Y(X,0)\geq \delta_1:=\sqrt{s^2-s_0^2},
\end{equation}
where $s\in (s_0,s_c)$ is the root of the equation ${\mathcal R}(s)=R$.

(iii) Let $R_0<\infty$, $\Omega(1)>0$ and $\Psi(X,\xi(X))\geq\delta>0$. Then
\begin{equation}\label{M4a}
\Psi_Y(X,0)>\theta_0\;\;\mbox{for $X\in\Bbb R$}.
\end{equation}
\end{lemma}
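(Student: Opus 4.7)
The three parts of this lemma are of quite different character, so I would handle them separately: part (i) is an immediate consequence of the free-surface boundary conditions, while parts (ii) and (iii) rely on maximum-principle estimates for $\Psi_Y$.

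For (i), I would differentiate the kinematic condition $\Psi(X,\xi(X))=1$ to obtain $\Psi_X(X,\xi(X)) = -\xi'(X)\Psi_Y(X,\xi(X))$, so that $|\nabla\Psi|^2 = (1+\xi'(X)^2)\Psi_Y^2$ on $S_\xi$. Substituting this into the Bernoulli surface condition $\tfrac{1}{2}|\nabla\Psi|^2+\xi=R$ yields
\[
\Psi_Y^2(X,\xi(X)) \;=\; \frac{2(R-\xi(X))}{1+\xi'(X)^2} \;\geq\; \frac{2(R-\xi(X))}{1+M^2},
\]
and taking the supremum over $X$ produces the stated lower bound by $2a/(1+M^2)$.

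For (ii) and (iii), the starting point is that $\phi := \Psi_Y$ is a strictly positive solution of the linear elliptic equation $\Delta\phi + \omega'(\Psi)\phi = 0$ in $D_\xi$, obtained by differentiating $\Delta\Psi+\omega(\Psi)=0$ in $Y$, and that the solitary-wave asymptotics \eqref{Ju31a} give $\phi(X,Y)\to U'(Y)$ uniformly as $X\to\pm\infty$. I would then introduce the auxiliary quantity $Q:=\Psi_Y^2+2\Omega(\Psi)$, which equals the constant $\theta^2$ on any laminar flow with parameter $\theta$, reduces to $\Psi_Y^2(X,0)$ on the bottom, and whose vertical derivative there vanishes: using $\Psi(X,0)=\Psi_X(X,0)=0$ and the trace $\Psi_{YY}(X,0)=-\omega(0)$ of the PDE, one computes $\partial_Y Q(X,0)=2\Psi_Y(X,0)(\Psi_{YY}(X,0)+\omega(0))=0$. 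For (ii) the bound $Q(X,0)\geq s^2-s_0^2$ (equivalently $\Psi_Y(X,0)\geq \delta_1$) would follow by comparing $\phi$ with barriers built from the laminar profile $U'(Y)$, using the positivity of $\phi$, the asymptotic values at $\pm\infty$, and the surface control coming from part (i). For (iii) the additional hypotheses $R_0<\infty$, $\Omega(1)>0$ (which force $\theta_0>0$), and the uniform positive bound at the surface are used to rule out near-stagnation on $S_\xi$, after which the strict inequality $\Psi_Y(X,0)>\theta_0$ follows by the same type of barrier argument against $U'$.

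The main obstacle is that neither $Q$ nor $\phi$ directly obeys a sign-definite maximum principle: the potential $\omega'(\Psi)$ in the equation for $\phi$ has no definite sign, and a direct computation gives $\Delta Q = 2|\nabla\Psi_Y|^2 + 2\omega'(\Psi)\Psi_X^2 - 2\omega(\Psi)^2$, which is also indefinite. To circumvent this, the proof must exploit the strict positivity of $\phi$ together with the one-dimensional structure of the laminar profile $U(Y)$ to construct a barrier, effectively reducing the two-dimensional comparison to a differential inequality along streamlines of the asymptotic laminar flow. Propagating the resulting estimate uniformly in $X$, so that the bound is truly pointwise on the bottom and matches the behavior at $X=\pm\infty$, is the delicate step, and it is presumably where the case distinction between $R_0=\infty$ and $R_0<\infty$ (together with the sign of $\Omega(1)$) enters to guarantee existence of an appropriate subcritical reference flow.
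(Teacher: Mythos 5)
Your part (i) is essentially the computation the paper relies on (the paper itself only cites \cite{Koz1b} for (i) and (ii)): differentiating $\Psi(X,\xi(X))=1$ gives $|\nabla\Psi|^2=(1+\xi'(X)^2)\Psi_Y^2$ on $S_\xi$, and the Bernoulli condition then yields $\Psi_Y^2(X,\xi(X))=2(R-\xi(X))/(1+\xi'(X)^2)\ge 2(R-\xi(X))/(1+M^2)$. Be aware, however, that this is a pointwise bound by $2(R-\xi(X))/(1+M^2)$, and taking $\sup_X$ only gives $\sup_X\Psi_Y^2(X,\xi(X))\ge 2a/(1+M^2)$; the pointwise inequality \eqref{Okt18aa} with $a=\sup_X(R-\xi(X))$ does not follow from your display (for a solitary wave $R-\xi(X)<R-d=a$ near the crest). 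This mirrors an $\inf$/$\sup$ defect in the statement itself, but you should not present the pointwise form as derived.

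For parts (ii) and (iii) there is a genuine gap: your text identifies the relevant objects ($\phi=\Psi_Y$, $Q=\Psi_Y^2+2\Omega(\Psi)$, the laminar profile) and then correctly observes that neither satisfies a sign-definite maximum principle, but the barrier that is supposed to rescue the argument is never constructed, the comparison is never verified, and the concluding sentence explicitly defers the ``delicate step'' with a ``presumably.'' That step is the entire content of the lemma. The paper does not work in the physical variables at all: (i) and (ii) are quoted from \cite{Koz1b}, and for (iii) it passes to the partial hodograph plane, where (by the argument of \cite{KLN17a}, which needs only $\Psi_Y\ge 0$ and the surface bound $\Psi_Y(X,\xi(X))\ge\delta$) the height function $h(q,p)$ is compared with the laminar stream function $H(p;\theta)=\int_0^p(\theta^2-2\Omega(\tau))^{-1/2}\,d\tau$ for some $\theta>\theta_0$, as in \eqref{M20a}. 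Since $h(q,0)=H(0;\theta)=0$, the ordering of $h$ and $H(\cdot;\theta)$ forces the corresponding ordering of $h_p(q,0)$ and $H_p(0;\theta)=1/\theta$, and by \eqref{F28b} this is precisely a bound on $\Psi_Y(X,0)$ in terms of $\theta$. The point you are missing is that the workable comparison is a scalar maximum-principle argument for $h-H$ in the fixed strip $Q$, where the elliptic structure is cooperative, not a comparison of $\Psi_Y$ with $U'$ in the unknown domain $D_\xi$. Until you either supply that hodograph comparison or actually build and verify a barrier in physical variables, parts (ii) and (iii) remain unproved.
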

\begin{proof} The items (i) and (ii) are proved in Sect. 3 in \cite{Koz1b}.

Let us turn to (iii). 
We will use the  proof of Theorem 1' (c')-(d'), given in Sect.2.2.3, \cite{KLN17a},  with some small changes. The first step consists of the verification of the inequality
\begin{equation}\label{M20a}
h(q,p)\geq H(p;\theta)\;\;\mbox{with some $\theta>\theta_0$},
\end{equation}
where
$$
H(p;\theta)=\int_0^p\frac{d\tau}{\sqrt{\theta^2-2\Omega(\tau)}}.
$$
The proof of this inequality requires only that $\Psi_Y\geq 0$ and $\Psi_Y(X,\xi(X))\geq\delta $.
The inequality (\ref{M20a}) and the relations $H(0;\theta)=h(q,0)=0$ implies $H_p(0;\theta)\leq h_p(q,0)$, which gives
$$
\Psi_Y(X,0)\geq \theta_0.
$$
This completes the proof Lemma  \ref{LM28a}(iii).
\end{proof}

\begin{lemma}\label{Lokt19} Let the property (i) and one of the properties (ii) or (iii) of the previous lemma be valid. Then there exists  a constant $c_0>0$ depending on $M$, $\delta$, $R$ such that
\begin{equation}\label{Okt18a}
\Psi_Y(X,Y)\geq c_0\;\;\mbox{for all $(X,Y)\in D_\xi $}.
\end{equation}
\end{lemma}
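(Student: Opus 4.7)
The plan is to argue by contradiction using translation invariance, compactness, and the strong maximum principle for the equation satisfied by $\Psi_Y$. Suppose no such $c_0>0$ exists uniformly in $(M,\delta,R)$. Then one can find a sequence $(\Psi_n,\xi_n)$ of solutions satisfying the hypotheses with the same $(M,\delta,R)$, together with points $(X_n,Y_n)\in\overline{D_{\xi_n}}$ for which $\Psi_{n,Y}(X_n,Y_n)\to 0$. After translating in $X$ by $X_n$, I may assume the offending points sit at $X=0$.

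I would then invoke Lemma \ref{LM20a} for a uniform bound on $|\nabla\Psi_n|$ and Lemma \ref{LM28a} for a uniform positive lower bound $\Psi_{n,Y}(X,0)\ge\delta_*>0$ on the bottom, and bootstrap to uniform interior $C^{2,\alpha_1}$ estimates for $(\Psi_n,\xi_n)$ via Lemma \ref{LM20b}. By Arzel\`a--Ascoli one extracts a subsequence $(\Psi_n,\xi_n)\to(\Psi^*,\xi^*)$ in $C^{2,\beta}_{\mathrm{loc}}$; the limit still solves (\ref{K2a}) on $D_{\xi^*}$, inherits $\Psi^*_Y\ge 0$ and $\Psi^*_Y(X,0)\ge\delta_*>0$, and, along a further subsequence $Y_n\to Y^*\in[0,\xi^*(0)]$, satisfies $\Psi^*_Y(0,Y^*)=0$.

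Differentiating the interior equation $\Delta\Psi+\omega(\Psi)=0$ in $Y$, the function $w=\Psi^*_Y$ is a nonnegative solution of the linear elliptic equation $\Delta w+\omega'(\Psi^*)w=0$, with bounded coefficient, that vanishes at $(0,Y^*)$. If $Y^*\in(0,\xi^*(0))$, the strong maximum principle forces $w\equiv 0$ in $D_{\xi^*}$, contradicting the bottom bound $\delta_*$. If $Y^*=0$ that bottom bound is violated directly. The remaining case $Y^*=\xi^*(0)$ is the only real obstacle: the surface Bernoulli relation $(\Psi^*_Y)^2=2(R-\xi^*)/(1+((\xi^*)')^2)$ would then force $\xi^*(0)=R$, i.e.\ a stagnation point at the crest of the limit wave. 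I would rule this out by applying Hopf's boundary point lemma to $w$ at $(0,\xi^*(0))$ (which demands a strictly negative outward normal derivative there) together with a direct computation of $\partial_n w$ from the Bernoulli boundary condition and the slope bound $|(\xi^*)'|\le M$; equivalently, in case (iii) the surface lower bound passes to the limit and immediately excludes $\xi^*(0)=R$, while in case (ii) the hypothesis $R_0=\infty$ rules out surface stagnation via the asymptotic analysis from \cite{Koz1b}.

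The main obstacle is precisely this crest-stagnation case; apart from it the argument is a standard translation--compactness--maximum principle scheme. The uniformity of $c_0$ in $(M,\delta,R)$ (and $\omega$) then follows because the equation is translation-invariant in $X$ and all the quantitative bounds employed are uniform in $X$.
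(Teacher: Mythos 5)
Your overall scheme (translate by $X_n$, extract a $C^{2,\beta}_{\mathrm{loc}}$ limit, and apply the strong maximum principle to $w=\Psi^*_Y$) is a legitimate alternative to the paper's argument, which is shorter and avoids passing to a limit: it quotes a global $C^{2,\alpha}$ a priori bound from Proposition 3.1 of \cite{KL1} and then combines the boundary lower bounds (\ref{Okt18aa}) and (\ref{Okt18ab})/(\ref{M4a}) with the Harnack inequality for the nonnegative solution $\Psi_Y$ of $\Delta w+\omega'(\Psi)w=0$ --- essentially the quantitative form of your qualitative maximum-principle step. Your interior and bottom cases are handled correctly.

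The crest case, which you yourself identify as the real obstacle, is where your argument has a genuine gap. Hopf's boundary point lemma applied to $w=\Psi^*_Y\ge 0$ at a surface zero does give $\partial_n w<0$ there, but to obtain a contradiction you must independently show $\partial_n w\ge 0$, and neither the Bernoulli condition nor the slope bound supplies this. Indeed, if $\xi^*(0)=R$ then $R-\xi^*=\tfrac12|\nabla\Psi^*|^2\ge 0$ forces $\xi^{*\prime}(0)=0$, so the outward normal at the crest is vertical; differentiating $\Psi^*(X,\xi^*(X))=1$ twice gives $\Psi^*_{XX}(0,\xi^*(0))=0$, and the field equation then gives $\partial_n w=\Psi^*_{YY}(0,\xi^*(0))=-\omega(1)$. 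Hopf's lemma therefore only yields the sign condition $\omega(1)>0$, not an absurdity, so crest stagnation is not excluded by this route. What actually closes the case is the ingredient you relegate to ``case (iii)'': the surface lower bound (\ref{Okt18aa}) is the \emph{conclusion} of property (i) of Lemma \ref{LM28a}, and property (i) is assumed in every case of the present lemma. Hence $\Psi_{n,Y}(X,\xi_n(X))^2\ge 2a/(1+M^2)$ with $a=\sup_X(R-\xi_n(X))=R-d>0$ fixed by $R$; this pointwise bound is translation-invariant and survives $C^1_{\mathrm{loc}}$ convergence, so $\Psi^*_Y(0,\xi^*(0))>0$ and the case $Y^*=\xi^*(0)$ simply never occurs. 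Replacing the Hopf detour and the vague appeal to ``asymptotic analysis from \cite{Koz1b}'' in case (ii) by this one observation makes your proof complete.
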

\begin{proof} By Proposition 3.1\cite{KL1} there exists constants $\alpha\in (0,1)$ and $C>0$ depending on $M$, $\delta$ and $R$  such that
\begin{equation}\label{Okt18b}
||\Psi||_{C^{2,\alpha}(D_\xi)}\leq C.
\end{equation}
Now the estimate (\ref{Okt18a}) follows from (\ref{Okt18aa}), (\ref{Okt18ab}) and (\ref{Okt18b}) by using the Harnack principle.
\end{proof}

\begin{remark}\label{R21a} Using this lemma we can complete the proof Proposition \ref{Pr26}.
Indeed, this proposition is proved in \cite{Koz1b} except of the case $R_0<\infty$ and $\Omega(1)>1$.
Reference to Lemma \ref{Lokt19} gives the proof of the remaining part of Proposition \ref{Pr26} consisting of the case $R_0<\infty$ and $\Omega(1)>0$.

\end{remark}

The following property  is a consequence of Theorem 1, \cite{KLN17}.

\begin{proposition}\label{PM20a} Let $(\Psi,\xi,R)$ be a solitary wave solution to (\ref{K2a}) and let
\begin{equation}\label{F12a}
\sup|\xi'(X)|\leq M_1 \;\;\mbox{and}\;\;||\Psi(X,Y)||_{C^{2,\alpha}(S_\xi)}\leq M_2.
\end{equation}
Then there exists a positive $\epsilon$ depending on $M_1$ and $M_2$ such that if $R\in (R_c,R_c+\varepsilon)$ then $\Psi=\Psi(t)$, $\xi=\xi(t)$ and $R=R(t)$ for a certain small $t$.
\end{proposition}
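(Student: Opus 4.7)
The plan is to argue by compactness and contradiction, leaning on the regularity lemmas already established in this section and on the local analytic structure of the branch constructed in Theorem \ref{T1}. Suppose the conclusion fails: then there is a sequence of solitary wave solutions $(\Psi_n,\xi_n,R_n)$ satisfying the uniform bounds (\ref{F12a}) with $R_n\downarrow R_c$, such that no $(\Psi_n,\xi_n,R_n)$ coincides with any point $(\Psi(t),\xi(t),R(t))$ from the branch (\ref{J3b}).

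The first step is to upgrade the regularity. From $|\xi_n'|\le M_1$, the $C^{2,\alpha}(S_{\xi_n})$ bound on $\Psi_n$, and Lemma \ref{LM20a}, one obtains a uniform gradient bound on $\Psi_n$. Lemma \ref{LM28a}(i) then yields a strictly positive lower bound on $\Psi_{n,Y}$ along the free surface, and Lemma \ref{Lokt19} promotes this to a strictly positive lower bound $\Psi_{n,Y}\ge c_0>0$ throughout $D_{\xi_n}$. With non-degeneracy in hand, Lemma \ref{LM20b} delivers uniform $C^{3,\alpha_1}$ bounds on $(\Psi_n,\xi_n)$.

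Next, after translating so that the crest of $\xi_n$ sits at $X=0$, these uniform bounds produce a subsequence converging in $C^{3,\alpha'}_{\mathrm{loc}}$ to a limit $(\Psi_\infty,\xi_\infty)$ which solves (\ref{K2a}) with $R=R_c$ (the evenness, monotonicity and asymptotic properties pass to the limit). Since the critical Bernoulli value $R_c$ admits only the critical uniform stream $(U_*,d_*)$ as a supercritical laminar asymptote, and in fact as the unique solution of (\ref{K2a}) at $R=R_c$ stated in Sect.~1, we must have $(\Psi_\infty,\xi_\infty)\equiv (U_*,d_*)$. In particular $\xi_n-d_*\to 0$ uniformly on every bounded interval and, using the global decay of solitary waves controlled by the uniform $C^{3,\alpha_1}$ bound, uniformly on $\Bbb R$.

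Finally, passing to the partial hodograph variables of Sect. \ref{SJ29a}, this convergence becomes $h_n\to H_*$ in $C^{3,\alpha'}_{\mathrm{loc}}(\overline{Q})$ with $R_n\to R_c$, where $H_*$ is the critical uniform stream (\ref{M4c}) at $R=R_c$. By Theorem \ref{T1} together with the limiting identity (\ref{Ju18aa}) and the real analytic re-parametrization of the branch near $t=0$, a full $C^{2,\alpha}_{0,e}(\overline{Q})\times \Bbb R$ neighborhood of $(H_*,R_c)$ meets the set of solitary wave solutions of (\ref{J4a}) precisely along the branch (\ref{J4ac}). Hence for $n$ large $(h_n,R_n)=(h(t_n),R(t_n))$ for some small $t_n$, contradicting the choice of the sequence. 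The main obstacle is this last step: at $R=R_c$ the linearization at the laminar flow has a zero eigenvalue, so local uniqueness does \emph{not} follow from the implicit function theorem and requires the Lyapunov--Schmidt reduction that parametrizes all small nearby solitary solutions; this is exactly the content of Theorem 1 of \cite{KLN17} which the statement invokes, and our task reduces to verifying that the a priori bounds (\ref{F12a}) together with $R$ close to $R_c$ force the solution into the regime covered by that theorem.
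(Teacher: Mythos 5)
The paper does not actually prove Proposition \ref{PM20a}: it disposes of it in one sentence, declaring it ``a consequence of Theorem 1, \cite{KLN17}'', the point being that the cited theorem already classifies all steady waves satisfying bounds of the type (\ref{F12a}) when $R$ is close to $R_c$. Your compactness-and-contradiction scaffold is therefore a genuinely different (and more explicit) route, but note that it does not remove the dependence on the external reference: the crucial step --- that every solitary wave sufficiently close to $(U_*,d_*,R_c)$ lies on the branch (\ref{J4ac}) --- is exactly the degenerate local uniqueness statement that cannot be extracted from Theorem \ref{T1} of this paper, which is purely an existence result with the limit (\ref{Ju18aa}) and says nothing about solutions off the curve. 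You flag this correctly and defer it to \cite{KLN17} (or, equivalently, to the center-manifold analysis behind \cite{W}), which puts your argument at the same level of completeness as the paper's; the compactness reduction is the added value if the cited theorem only yields the local statement, and is superfluous if it applies directly under (\ref{F12a}).

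Two technical points in the reduction deserve tightening. First, the passage from $C^{3,\alpha_1}_{\mathrm{loc}}$ convergence to convergence in $C^{2,\alpha}(\overline{Q})$ over the whole unbounded strip cannot be justified by ``global decay controlled by the uniform $C^{3,\alpha_1}$ bound'': the exponential decay rate of solitary waves degenerates as $R\downarrow R_c$ (the Froude number tends to $1$), so there is no uniform-in-$n$ decay estimate. What saves the argument is the monotonicity of $\xi_n$ on $X>0$ together with $d_n\le \xi_n(X)\le \xi_n(0)$ and $\xi_n(0),d_n\to d_*$ (and the analogous squeezing for $\Psi_n$), after which interpolation with the uniform $C^{3,\alpha_1}$ bound upgrades $C^0$ convergence to $C^{2,\alpha}$ convergence. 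Second, the application of Lemma \ref{LM28a}(i) followed by Lemma \ref{Lokt19} requires $a=\sup_X(R_n-\xi_n(X))=R_n-d_n$ to be bounded away from zero along the sequence; this holds because $R_c-d_*=\tfrac12 U_*'(d_*)^2>0$, but it should be said, since otherwise the lower bound on $\Psi_{n,Y}$ and hence the $C^{3,\alpha_1}$ estimates of Lemma \ref{LM20b} would collapse. With these repairs the proposal is a valid, self-contained reduction of Proposition \ref{PM20a} to the local classification near the critical laminar flow.
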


\begin{proposition}\label{PrF12a} Let $\widehat{\mathcal U}_\delta$ and $\widehat{\mathcal U}$ be introduced by (\ref{M7a}) and (\ref{M7az}). The set $\overline{\widehat{{\mathcal U}}_\delta\bigcap\widehat{{\mathcal S}}}$ is compact in $C^{2,\alpha}_0(Q)\times [R_c,\infty)$.

\end{proposition}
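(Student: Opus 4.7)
The plan is to establish sequential compactness: given any sequence $(\widehat{h}_n,R_n)\in\widehat{\mathcal U}_\delta\cap\widehat{\mathcal S}$, I will extract a subsequence converging in $C^{2,\alpha}_0(Q)\times[R_c,\infty)$, which suffices since the target space is complete. First I bound $R_n$: the relations $\widehat{h}_n(q,0)=0$ and $(\widehat{h}_n)_p\leq\delta^{-1}$ give $\widehat{h}_n(q,1)\leq\delta^{-1}$, and plugging this into the Bernoulli condition in (\ref{J4a}) at $p=1$ together with the uniform bounds on $(\widehat{h}_n)_p,(\widehat{h}_n)_q$ yields $R_n\leq C(\delta)$. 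Passing to a subsequence, $R_n\to R_*\in[R_c+\delta,C(\delta)]$, and the corresponding supercritical laminar profiles $H_{R_n}$ converge to $H_{R_*}$ in $C^{2,\alpha}([0,1])$ by Proposition \ref{Pr26b}(i) and (\ref{F22a}).

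Next, since $\delta\leq(\widehat{h}_n)_p\leq\delta^{-1}$ and $|(\widehat{h}_n)_q|\leq\delta^{-1}$, Lemma \ref{M25a} applies on balls of any fixed radius $\rho$ in $\overline{Q}$ and gives uniform $C^{3,\alpha_1}$ bounds on $\widehat{h}_n$ on each such ball. A standard diagonal extraction, combined with the compact embedding of $C^{3,\alpha_1}$ into $C^{2,\alpha}$ for $\alpha<\alpha_1$ (after a final relabelling of the exponent), produces a subsequence converging in $C^{2,\alpha}$ on every compact subset of $\overline{Q}$ to a limit $\widehat{h}_*$ that solves (\ref{J4a}) with $R=R_*$ and satisfies the pointwise inequalities defining $\widehat{\mathcal U}_\delta$.

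The main obstacle is to upgrade this local convergence to convergence in the uniformly local norm of $C^{2,\alpha}(Q)$. The crucial technical step is the uniform exponential decay
\[\|\widehat{h}_n-H_{R_n}\|_{C^{2,\alpha}(Q_{a,a+1})}\leq C\, e^{-\mu |a|},\qquad a\in\Bbb R,\]
with constants $C,\mu>0$ depending only on $\delta$. The rate $\mu$ is controlled by the lowest eigenvalue of the one-dimensional Sturm--Liouville problem (\ref{Okt25az}) at the asymptotic laminar state, which is strictly positive by supercriticality and uniformly bounded below whenever $R\geq R_c+\delta$. I would obtain this by a Saint-Venant-type argument applied to the difference $w_n=\widehat{h}_n-H_{R_n}$: far from the crest $w_n$ solves a linear elliptic equation which is a small perturbation of a constant-coefficient problem on the strip, whose separation-of-variables analysis is governed precisely by the eigenvalues in (\ref{Okt25az}); energy estimates in sliding strips $Q_{a,a+L}$ then produce the geometric decay.

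Once the uniform tail estimate is in place, combining it with the $C^{2,\alpha}_{loc}$ convergence $\widehat{h}_n\to\widehat{h}_*$ and with $H_{R_n}\to H_{R_*}$ gives $\sup_a\|\widehat{h}_n-\widehat{h}_*\|_{C^{2,\alpha}(Q_{a,a+1})}\to 0$; together with $R_n\to R_*$ this yields convergence in $C^{2,\alpha}_0(Q)\times[R_c,\infty)$ and closes the argument. The hardest step is precisely the uniform tail estimate, since it requires quantitative spectral information that is uniform over the entire admissible set $\widehat{\mathcal U}_\delta$ rather than at an individual solution.
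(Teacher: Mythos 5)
Your overall architecture matches the paper's: uniform interior/boundary bounds from Lemma \ref{M25a} (giving $C^{3,\alpha_1}$ control and hence local compactness in $C^{2,\alpha}$), a bound on $R$, and then a decomposition $\widehat{h}=H+v$ into the asymptotic laminar profile plus a decaying perturbation, with the whole difficulty concentrated in the compactness of the perturbation part. Your direct bound $R_n\le C(\delta)$ from the Bernoulli condition at $p=1$ is fine and is in fact more self-contained than the paper's citation of Proposition 2.4 of \cite{Koz1b}. Where you diverge is the tail: the paper disposes of the uniformly-local convergence in one line by invoking Theorem 3.10 of \cite{W}, which asserts precisely that the set of perturbations $v$ in the representation (\ref{F6a}) is compact, whereas you propose to re-prove the underlying uniform exponential decay by a Saint-Venant argument. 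That is a legitimate and more transparent route, and your identification of the decay rate with the spectral gap of (\ref{Okt25az}), uniform for $R\ge R_c+\delta$, is the correct mechanism.

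The gap is in how the Saint-Venant iteration gets started. Your argument treats $w_n=\widehat{h}_n-H_{R_n}$ ``far from the crest'' as a small perturbation of a constant-coefficient problem, but smallness of $w_n$ on $\{|q|\ge L\}$ for an $L$ \emph{independent of $n$} is exactly what needs to be proved: each individual solitary wave decays, yet nothing in the definition of $\widehat{\mathcal U}_\delta$ prevents, a priori, a sequence whose ``effective support'' spreads out so that the perturbative regime begins further and further out. To close this you need an additional uniform ingredient --- e.g.\ a uniform bound on a global energy $\int_Q\bigl(|\nabla w_n|^2+w_n^2\bigr)\,dq\,dp$, or the monotonicity of $\xi_n$ for $X>0$ combined with a Liouville/translation-compactness argument showing that $\sup_{|q|\ge L}|w_n|\to 0$ uniformly in $n$ as $L\to\infty$ --- before the sliding-strip energy estimates can be iterated with constants depending only on $\delta$. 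You flag this as the hardest step, correctly, but the plan as written does not supply the uniform starting point, which is the actual content of the cited Theorem 3.10 of \cite{W}. With that ingredient supplied (or with the citation restored), the rest of your argument goes through.
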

\begin{proof} The result follows from Proposition \ref{PrF12az}.

\end{proof}

\subsection{Proofs of Theorems \ref{TrF18b} and \ref{ThF18}}

Theorem \ref{TrF18b} is a reformulation of Theorem \ref{ThF18} after application of partial hodograph transform. Theorem \ref{TrF18b} is more convenient for application of the abstract bifurcation analysis and Theorem \ref{ThF18} is more convenient for verification properties of global branches of bifurcating solutions (see properties a)-d) there).
Therefore we will prove both theorems simultaneously.

One can verify that the operator (\ref{Ja11aa}) satisfies the assumptions of Sect. \ref{SD1} and \ref{SectJ3} concerning Fredholm properties. The assumptions (\ref{3}) and (\ref{4}) follow from (\ref{Okt30a}). Furthermore, according to Theorem \ref{ThJ3} there exist a branch
$$
(w,\lambda)=(\kappa(s),\Lambda(s)):(0,\epsilon)\rightarrow {\mathcal U},
$$
whose asymptotics is described in that theorem. We denote this branch by ${\mathcal B}^+$. To obtain a global branch we intend to use Theorem \ref{ThJ3a}. We put
\[
{\mathcal K}_j=\widehat{\mathcal U}_{1/j},\;j=1,2,\ldots,\;\mbox{and}\;\;\widehat{\mathcal U}=\bigcup_{j}{\mathcal K}_j
\]
The compactness of ${\mathcal K}_j\bigcap {\mathcal S}$, required in that theorem,  follows from Proposition \ref{PrF12a}. It remains to verify the property (C) in the beginning of Sect. \ref{SectJ3} in order to apply Theorem \ref{ThJ3a}.
According to Proposition \ref{PrDa} we consider three cases  for the behavior of the branch ${\mathcal B}^+$.
We put
\begin{equation}\label{M25aa}
\widehat{h}(q,p;s)=h(q,p;t(s))+w(q,p;t(s)).
\end{equation}

\smallskip
1. Assume that the curve ${\mathcal B}^+$ satisfies the option (i) of Proposition \ref{PrDa}. Then the non-degeneracy condition (C) for the branch ${\mathcal B}^+$ is satisfied and
 we can apply Theorem \ref{ThJ3a} and get the existence of a curve of solitary solutions ${\mathcal B}$, continuously parameterised by $s\in [0,\infty)$, which extends ${\mathcal B}^+$. We denote by
   ${\mathcal J}^+$ and ${\mathcal J}$ the curves corresponding to ${\mathcal B}^+$ and ${\mathcal B}$ respectively, connected by (\ref{M25aa}). Both of them belongs to $\widehat{\mathcal U}\bigcap \widehat{\mathcal S}$.
According to Theorem \ref{ThJ3a} the assertions (a)-(d) imply all properties in Theorem \ref{TrF18b} except of (a)--(d) in Theorem \ref{T1}. Let us turn to proving these  properties.
Assume that non of the properties (a)-(d) in Theorem \ref{ThF18} holds. Then
$$
|\xi'(X)|\leq M,\;\;\Psi_Y\geq\delta>0,\;\;t(s)\leq T\;\;\mbox{along the curve ${\mathcal J}$}
$$
and the curve is not periodic. By Lemma \ref{LM20a} the inequality (\ref{M16a}) is valid and hence  Lemma \ref{LM20b} implies that
$$
||\Psi||_{C^{3,\alpha_1}(D_\xi)}\leq C_1,\;\;\||\xi||_{C^{3,\alpha_1}(S_\xi)}\leq C_1,
$$
where $C_1$ depends on the constants $M$, $\delta$ and $T$. By Proposition \ref{PM20a} there exist $\epsilon>0$ such that $R>R_c+\epsilon$. Otherwise the curve must coincide with the curve (\ref{J3b}) on a certain interval and hence everywhere, which contradict to the construction of the bifurcating curve. All these inequalities imply that the curve must belong to $\widehat{\mathcal U}_\delta$ for certain $\delta>0$. This implies that the closure of the curve is compact and since it is not periodic  this contradicts to the property (ii) in Theorem \ref{ThJ3a}.

\bigskip
2. Assume that the curve ${\mathcal B}^+$ satisfies the case (ii) in Proposition \ref{PrDa} and hence the property (i) in Theorem \ref{TrF18b}. This means that $\lambda\equiv 0$ or $t\equiv t_*$ on ${\mathcal B}^+$.

Introduce the following subspaces
$$
\widehat{\bf U}=\{(w,t)\in {\mathcal U}\,:\,P_0w=0\}\;\;\mbox{and}\;\;\widehat{{\mathcal Z}}=\{w\in {\bf Z}\times {\bf Y}\,:\,\widehat{P}_0w=0=0\},
$$
where the projector operators are given by (\ref{M24a}) and used in the Lyapunov-Schmidt method, see Sect. \ref{SD1}.
Consider the bifurcation problem
\begin{equation}\label{Dec29a}
{\mathcal F}_0(w,s):=(I-\widehat{P}_0)((A(0)w,{\mathcal N}(0)w)+\widehat{\mathcal F}_0(s\widehat{v}_0+w,0))=0, 
\end{equation}
which is the equation (\ref{F1ba}) with $\lambda=0$. Here ${\mathcal F}_0$ maps ${\bf U}$ into $\widehat{\mathcal Z}$.
 Then $w=\psi(s\tilde{v}_0,0)$ satisfies this equation and the equation (\ref{F1b}) which becomes
\begin{equation}\label{F1bx}
 \widehat{P}_0\widehat{{\mathcal F}}(s\widehat{v}+w,0)=0.
\end{equation}
We consider (\ref{Dec29a}) as a bifurcation equation.
 The corresponding Frechet derivative with respect to $w$ is invertible. So we can apply Theorem \ref{ThJ3a} to the bifurcation problem (\ref{Dec29a}), where
 $$
 {\mathcal B}^+=\{(w,s)\,:\, w=\psi(s\tilde{v}_0,0), \;s\in (0,\epsilon)\}.
 $$
The extension branch $(w,s)$ we denote by ${\mathcal B}$. The relation (\ref{F1bx}) is still valid for the whole curve ${\mathcal B}$ since this curve is analytic and it is valid on  ${\mathcal B}^+$. Thus the extension curve satisfies both equations (\ref{Dec29a}) and (\ref{F1bx}). The verification of properties (a)-(d) in Theorem \ref{TrF18b} can  be done in the same way as in the previous case 1.

\bigskip
3. Assume finally that the curve satisfies the case (iii) in Proposition \ref{PrDa}, i.e.
 $\lambda$ is not identically constant and $\mu_1(s)=0$ for small $s\neq 0$ and the bifurcating solution ${\mathcal B}^+$ is given by (\ref{Dec28a}) and (\ref{Dec28aa}).
Consider the curve $(x(s), \Lambda(s))$ from Theorem \ref{ThJ3}. This curve is analytic in an interval $(0,\epsilon)$ for a certain positive $\epsilon$. Let ${\mathcal J}^+$ be the curve obtained from ${\mathcal B}^+$ by (\ref{M25aa}).
Let us extend this curve to a global curve ${\mathcal J}\subset \widehat{\mathcal S}$ such that near every point in ${\mathcal J}$ it admits an analytic injective parameterisation. Due to local analyticity of the curve, $\mu_1=0$ on this curve and $\lambda$ can be zero only in a set consisting of isolated points of the curve. One of the end points of ${\mathcal J}$ is $(h(t_*),t_*)$.
Consider the set $\overline{\mathcal J}$. Assume that $\overline{\mathcal J}\in {\mathcal U}_\delta$. Then this set is compact by Proposition \ref{PrF12a}.  By continuity the eigenvalue, $\mu_1$ is equal to $0$ in all points in $\overline{\mathcal J}$.  Let us take a point $(\widehat{h}_1,\lambda_1)$ on the curve $\overline{\mathcal J}$.
Let us introduce the eigenfunction $\widehat{v}$ of the Frechet derivative at this point.
 We can assume that $L^2$ norm of $\widehat{v}$ is $1$.  Consider the equation
\begin{equation}\label{M22a}
{\mathcal F}(\widehat{h}_1+w,\lambda_1+\tau)=0
\end{equation}
in a neighborhood of this point.
By introducing the projectors $P$ and $\widehat{P}$ by (\ref{M24a}), we can represent $w=s\widehat{v}+\tilde{v}$ where $Pv=0$ and $s$ is a small number. Then the system (\ref{M22a}) can be split as follows
$$
\widehat{P}{\mathcal F}(\widehat{h}_1+s\widehat{v}+v,\lambda_1+\tau)=0
$$
and
$$
(I-\widehat{P}){\mathcal F}(\widehat{h}_1+s\widehat{v}+v,\lambda_1+\tau)=0.
$$
From the last system we can find  $v=v(s,\tau)$  by using the implicit function theorem, and write the first equation as
$$
{\bf F}(s,\tau):=P{\mathcal F}(\widehat{h}+s\widehat{v}+v(s,\tau),\lambda_1+\tau)=0.
$$
The function ${\bf F}(s,\tau)$ is analytic with respect  $s$  and  $\tau$ in a neighborhood of $(s,\tau)=(0,0)$. Let us show that it is not identical zero. Indeed, if it
is identical zero then the bifurcation equation (\ref{M22a}) is satisfied in a neighborhood of  $(\widehat{h}_1,\lambda_1)$. Since there are point of ${\mathcal J}$ inside this neighborhood we obtain a contradiction with the constraction of the curve ${\mathcal J}$. Therefore the function ${\bf F}(s,\tau)$ is not identically zero.
Then by Theorem \ref{ThJ3} there exist  local branches of local curves described there which admit a local analytic injective parametrization. Therefore the curve ${\mathcal J}$ can be extended if $(\widehat{h}_1,\lambda_1)$ does not belong to ${\mathcal J}$ or it is a loop. Thus if ${\mathcal J}$ is the maximal extension curve it must be a loop or it does not belong to any $\widehat{\mathcal U}_\delta$ with $\delta>0$. So it remain to prove the properties (a)-(d) in Theorem \ref{TrF18b}. This can be done in the same way as in the case 1.

\subsection{Proof of Theorem \ref{Tsep27}}\label{SSep28}

If $t_*$ is a turning point then the equation $R(t)=R$ has two roots $t_1(R)$ and $t_2(R)$ for $R\in (R-\epsilon,R)$ or $R\in (R,R+\epsilon)$ depending on the increasing-decreasing or decreasing-increasing behavior of the function $R(t)$. Then the vector functions
$$
(\Psi(t_k(R)),\xi(t_k(R)),R),\;\;k=1,2,
$$
give two different solitary solutions to the problem (\ref{K2a}).

If $t_*$ is not a turning point, i.e. the function $R(t)$ is strongly monotone in a neighborhood of $t_*$ then all bifurcations curves in Theorems \ref{TrF18b} and \ref{ThF18} give secondary bifurcations to the problem (\ref{K2a}) for the branch of solitary waves (\ref{J3b}).

Let $t(s)=t_*$ and hence $R(t(s))=R(t_*)$. Then the curve
$$
(\widehat{\Psi}_s,\widehat{\xi}_s,R(t_*)
$$
consists of solitary solutions to the problem (\ref{K2az}), which are different from the solution $(\Psi(t_*),\xi(t_*),R(t_*))$.

Now let $R(t)$ is a strongly monotone function for small $|t-t_*|$ and $t(s)$ is not identically $t_*$ for small $|s|$. Since the function $t(s)$ is analytic possibly after reparametrization, in a neighborhood of $s=0$ it is strongly monotone in  intervals $[0,\epsilon)$ and $(-\epsilon,0]$ for a small positive $\epsilon$. Four  cases are possible (i) $t(s)$ strongly increasing, (ii) it is strongly decreasing, (iii) it is increasing on $(-\epsilon,0]$ and then decreasing on $[0,\epsilon)$ and (iv) it is decreasing on $(-\epsilon,0]$ and then increasing on $[0,\epsilon)$.

Consider the case (i). Assume that it is strongly increasing. Then the equation $t(s)=t$ is uniquely solvable in a neighborhood of the point $s=0$, $t=t_*$ and denote by $s(t)$ its solution. The vector function
$$
(\widehat{\Psi}_{s(t)},\widehat{\xi}_{s(t)}, R(t))
$$
gives a solution to (\ref{K2az}) different from (\ref{J3b}), (\ref{Au30a}) in a neighborhood of $(\Psi(t_*),\xi(t_*),R(t_*))$. All other cases are considered similarly.
This completes the proof.

\bigskip
\noindent
{\bf Acknowledgements.} The study was carried out with the financial support of the Ministry of Science and Higher Education of the Russian Federation in the framework of a scientific project under agreement No.~075-15-2025-013.

The article has been accepted for publication in the Saint Petersburg Mathematical Journal.
The author expresses his appreciation to a anonymous reviewer for useful and important comments.




\section{References}

{

\end{document}